\numberwithin{equation}{section}
\newtheorem{Theorem}{Theorem}[section]
\newtheorem{Proposition}[Theorem]{Proposition}
 { \theoremstyle{definition}
\newtheorem{Definition}[Theorem]{Definition}

\newtheorem{Remark}[Theorem]{Remark} }
\begin{document}

\allowdisplaybreaks

\newcommand{\arXivNumber}{1710.03977}

\renewcommand{\PaperNumber}{111}

\FirstPageHeading

\ShortArticleName{Moduli of Parabolic Connetions with Quadratic Differential}

\ArticleName{The Moduli Spaces of Parabolic Connections\\ with a Quadratic Differential\\
and Isomonodromic Deformations}

\Author{Arata KOMYO}

\AuthorNameForHeading{A.~Komyo}

\Address{Department of Mathematics, Graduate School of Science, Osaka University, \\
Toyonaka, Osaka 560-0043, Japan}
\Email{\href{mailto:a-koumyou@cr.math.sci.osaka-u.ac.jp}{a-koumyou@cr.math.sci.osaka-u.ac.jp}}

\ArticleDates{Received January 23, 2018, in final form October 03, 2018; Published online October 13, 2018}

\Abstract{In this paper, we study the moduli spaces of parabolic connections with a~quad\-ratic differential. We endow these moduli spaces with symplectic structures by using the fundamental 2-forms on the moduli spaces of parabolic connections (which are phase spaces of isomonodromic deformation systems). Moreover, we see that the moduli spaces of parabolic connections with a quadratic differential are equipped with structures of twisted cotangent bundles.}

\Keywords{parabolic connection; quadratic differential; isomonodromic deformation; twis\-ted cotangent bundle}

\Classification{14D20; 34M56}

\section{Introduction}

\looseness=1 Let $C$ be a smooth projective curve of genus $g$ (where $g\ge 2$). Narasimhan--Seshadri \cite{NS} showed that vector bundles on $C$ are stable if and only if they arise from irreducible unitary representations of the fundamental group of $C$. The moduli space of stable vector bundles on $C$ is equipped with a natural symplectic structure. Although the complex structure on this moduli space depends on the complex structure of $C$, the symplectic structure of this moduli space depends only on the underlying topological surface of $C$. This picture has been investigated by Atiyah--Bott \cite{AB} and Goldman \cite{Goldman}. There exist generalizations of this picture. One can consider the moduli space of pairs $(E,\nabla)$ where $(E,\nabla)$ is a rank $r$ vector bundle on $C$ with a holomorphic connection $\nabla$. This moduli space is equipped with a (holomorphic) symplectic structure. There exists an analytic isomorphism between the moduli space of pairs $(E,\nabla)$ and the moduli space of representations of the fundamental group of $C$ into $\mathrm{GL}(r,\mathbb{C})$ by taking a holomorphic connection to its monodromy representation. Considering the variation of this isomorphism when deforming the curve, we can define the isomonodromic foliation on the moduli space of triples $(C, E,\nabla)$. This foliation is transversal to the fibration $(C,E,\nabla)\mapsto C$ of complementary dimension. There exists a closed $2$-form on the moduli space of triples $(C,E,\nabla)$ such that the kernel of the closed $2$-form coincides with the tangent spaces of leaves of the foliation and this $2$-form induces a (holomorphic) symplectic structure on the moduli space of pairs $(E,\nabla)$ over a fixed curve $C$. This generalization has been investigated by Goldman \cite{Goldman}, Hitchin \cite{Hit1}, and Simpson \cite{Simp1,Simp2}. Moreover, this generalized picture was generalized to the singular setting by Iwasaki \cite{Iwa1}, Hitchin \cite{Hit2}, Boalch \cite{Boalch}, and Krichever \cite{Krich}. Remark that, in the logarithmic case, Inaba--Iwasaki--Saito \cite{IIS} and Inaba \cite{Inaba} have constructed the moduli \textit{scheme} of triples $(C,E,\nabla)$ (satisfying some stability condition) and showed that the closed 2-form on this moduli scheme is algebraic.

We recall the definitions of Lagrangian triples and Hamiltonian data, which are discussed in~\cite{BK}. Let $p\colon X \rightarrow S$ be a smooth morphism of smooth varieties. \textit{A $p$-connection} is an $\mathcal{O}_X$-linear morphism $\nabla_S\colon p^* \Theta_S \rightarrow \Theta_X$ such that $d p \circ \nabla_S = \mathrm{id}_{p^*\Theta_S}$. Here $\Theta_S$ and $\Theta_X$ are the tangent sheaves of $S$ and $X$, respectively. A $p$-connection $\nabla_S$ is \textit{integrable} if the corresponding map $\Theta_S \rightarrow p_* \Theta_X$ commutes with brackets. Note that an integrable $p$-connection $\nabla_S$ defines an action of $\Theta_S$ on relative differential forms $\Omega_{X/S}$ by the Lie derivatives along horizontal vector field $\nabla_{S}(\Theta_S)$. A form $\omega\in \Omega^2_{X/S}$ is \textit{$\nabla_S$-horizontal} if $\omega$ is fixed by the $\Theta_S$-action.

\begin{Definition}\looseness=-1 Let $X$ be a smooth algebraic variety over $\mathbb{C}$ and $T^*=T^*(X) \rightarrow X $ be the cotangent bundle on $X$. A {\it twisted cotangent bundle} on $X$ is a $T^*$-torsor $\pi_{\phi} \colon \phi \rightarrow X$ (i.e., $\pi_{\phi}$ is a fibration equipped with a~simple transitive action of $T^*$ along the fibers) together with a~symplectic form $\omega_{\phi}$ on $\phi$ such that $\pi_{\phi}$ is a polarization for $\omega_{\phi}$ (i.e., $ \dim \phi =2 \dim X $ and the Poisson bracket $\{ \cdot , \cdot \}$ vanishes on $\pi_{\phi}^* \mathcal{O}_X$) and for any $1$-form $\nu$ on an open set $ U \subset X$ one has
$t^*_{\nu} (\omega_{\phi}) = \pi_{\phi}^* d \nu + \omega_{\phi}$ on $\pi_{\phi}^{-1}(U)$. Here $t_{\nu} \colon \pi_{\phi}^{-1}(U) \rightarrow \pi_{\phi}^{-1}(U)$; $t_{\nu}(a) = a + \nu_{\pi(a)}$ is the translation by~$\nu$.
\end{Definition}

For example, the map from the moduli space of pairs $(E,\nabla)$ to the moduli space of vector bundles defined by $(E,\nabla) \mapsto E$ is a twisted cotangent bundle on the moduli space of vector bundles (see \cite[Lemma~IV.4]{Faltings} and \cite[Section~4]{BF}). Note that this moduli space of vector bundles is a smooth algebraic stack. We can define a twisted cotangent bundle on a smooth algebraic stack in the same way.

\begin{Definition}Let $S$ be a smooth variety. An \textit{$S$-Lagrangian triple} consists of a morphism $\pi\colon X \rightarrow Y$ of $S$-varieties $p_X \colon X\rightarrow S$ and $p_Y\colon Y \rightarrow S$, a relative 2-form $\omega \in \Omega_{X/S}^2(X)$ and a~$p_X$-connection $\nabla_S$ such that
\begin{itemize}\itemsep=0pt
\item[(i)] $p_X$, $p_Y$ and $\pi$ are smooth surjective morphisms,
\item[(ii)] the form $\omega$ is closed and non-degenerate,
\item[(iii)] for any $s \in S$ the morphism $\pi_s \colon X_s \rightarrow Y_s$ is a twisted cotangent bundle over $Y_s$, and
\item[(iv)] $\nabla_S$ is integrable and $\omega$ is $\nabla_S$-horizontal.
\end{itemize}
\end{Definition}

\begin{Definition}\label{Hamiltonian datum} An \textit{$S$-Hamiltonian datum} on an $S$-variety $p_Y \colon Y\rightarrow S$ consists of
\begin{itemize}\itemsep=0pt
\item[(i)] a twisted cotangent bundle $\big(\widetilde{X}, \omega_{\widetilde{X}}\big)$, $\tilde{\pi}\colon \widetilde{X} \rightarrow Y$ over $Y$. Put $X := \widetilde{X} \ \mathrm{mod}\ p^*_Y \Omega_S^1$: this is a $\Theta_{Y/S}^*$-torsor over $Y$; let $\widetilde{X} \xrightarrow{r} X \xrightarrow{\pi} Y$ be the projections and
\item[(ii)] a section $h \colon X \rightarrow \widetilde{X}$ of $r$ (called \textit{Hamiltonian})
\end{itemize}
such that for each $x\in X$ the form $(\omega_X)_x \in \bigwedge^2 \Theta^*_{X,x}$ has rank $\dim X -\dim S$. Here we put $\omega_X := h^* \omega_{\widetilde{X}}$, which is a closed 2-form on $X$.
\end{Definition}

Remark that the twisted cotangent bundle $\widetilde{X}$ over $Y$ is isomorphic to the fiber product \smash{$X\times_S T^*S$} as symplectic manifolds. This isomorphism is given by the morphism $\tilde{r} \colon \widetilde{X} \rightarrow$ \smash{$X\times_S T^*S$}, $\tilde{r}(\tilde{x}) = (r(\tilde{x}), \tilde{x} - h(r(\tilde{x})))$. Here the symplectic form on $X\times_S T^*S$ is equal to the sum of~$\omega_X$ and a standard symplectic form on $T^*S$. Now, we describe a construction of $S$-Lagrangian triples from $S$-Hamiltonian data $\big(\widetilde{X}, \omega_{\widetilde{X}}, \tilde{\pi}, h \big)$. Let $\pi \colon X\rightarrow Y$ be the map as in Definition~\ref{Hamiltonian datum}. For each $s \in S$, the map $\pi_s \colon X_s \rightarrow Y_s$ is a $T^*(Y_s)$-torsor induced by the $\Theta_{Y/S}^*$-torsor $X \rightarrow Y$. Let~$\omega$ be the image of $\omega_{X}$ under the natural morphism $\Omega^2_X(X)\rightarrow \Omega^2_{X/S}(X)$. For the natural map $\iota_{X_s} \colon X_s \rightarrow X$, the pull-back $\iota^*_{X_s} \omega$ is a symplectic form on $X_s$. Let $a \in \Omega^1_{Y_s}(U)$ be a~local section over an open set $U \subset Y_s$. We take a collection $\{ (U_i, \tilde{a}_i) \}_i$ where $\{ U_i\}_i$ is an open covering of $U$ and $\tilde{a}_i \in \Omega^1_{Y}|_{Y_s}(U_i)$ such that $\iota_{Y_s}^*(\tilde{a}_i) = a|_{U_i}$, where $\iota_{Y_s} \colon Y_s \rightarrow Y$ is the natural map. We can show that $t_{a}^*( \iota^*_{X_s} h^* \omega_{\widetilde{X}}) = (h|_{X_s})^* t_{\tilde{a}_i}^* (\iota^*_{\widetilde{X}_s} \omega_{\widetilde{X}})$ on $\pi^{-1}_s(U_i)$, where $\iota_{\widetilde{X}_s} \colon \widetilde{X}_s \rightarrow \widetilde{X}$ is the natural map. (Here note that $\widetilde{X}$ is isomorphic to $X\times_S T^*S$.) In particular, the right-hand side is independent of the choice of a lift $\tilde{a}_i$ of $a|_{U_i}$. Then $t_{a}^*( \iota^*_{X_s} \omega) - \iota^*_{X_s} \omega = (h|_{X_s})^* t_{\tilde{a}_i}^* (\iota^*_{\widetilde{X}_s} \omega_{\widetilde{X}}) - (h|_{X_s})^*(\iota^*_{\widetilde{X}_s} \omega_{\widetilde{X}}) =(h|_{X_s})^* (\tilde{\pi}|_{\widetilde{X}_s})^* d ( \tilde{a}_i) =\pi_s^*da$. We have that $\pi_s \colon X_s \rightarrow Y_s$ is a twisted cotangent bundle. Put $p_X := p_Y \circ \pi$. The kernels of~$(\omega_X)_x$ for each $x \in X$ form a subbundle of the tangent bundle $TX$, which is transversal to fibers of $p_X$. Since the form $(\omega_X)_x \in \bigwedge^2 \Theta^*_{X,x}$ has rank $\dim X -\dim S$ and $\omega_X$ is closed, this subbundle defines an integrable $p_X$-connection $\nabla_{S}$. By the construction, $\omega$ is $\nabla_S$-horizontal. Then $(\pi \colon X\rightarrow Y,\omega ,\nabla_S )$ is an $S$-Lagrangian triple. The purpose of this paper is to construct $S$-Hamiltonian data $\big(\widetilde{X}, \omega_{\widetilde{X}}, \tilde{\pi}, h \big)$ from $S$-Lagrangian triples $(\pi \colon X\rightarrow Y,\omega_X ,\nabla_S )$ \textit{by using concrete argument} in the case of isomonodromic deformations. (There exists a more abstract construction in~\cite{BK} for a general case.) Now, following~\cite{BK}, we describe that the Hamiltonian $h \colon X \rightarrow \widetilde{X}$ of an $S$-Hamiltonian datum is locally given by local functions and the integrable $p_X$-connection $\nabla_S$ associated to the $S$-Hamiltonian datum has a~description by these functions. Let $x$ be a~point of~$X$. Let $y= \pi (x)$ and $s= p_X (x)$ be the projections of~$x $. Let $( t_a )_{a=1,\ldots,\dim S}$ be local coordinates on a neighborhood of $s \in S$ and~$q_i$, $i=1,\ldots, \dim Y_s$, be functions on a neighborhood of $y\in Y$ such that $( q_i, t_a )_{i,a}$ are local coordinates at $y$ on $Y$. Here we denote pull-backs of local functions by the same notations as the local functions for simplicity. Choose functions~$h_a$ and~$p_i$ on a neighborhood of $h(x) \in \widetilde{X}$ such that
\begin{gather*} \omega_{\widetilde{X}} = \sum\limits_{i=1}^{\dim Y_s} d p_i \wedge d q_i + \sum\limits_{a=1}^{\dim S} d h_a \wedge d t_a .\end{gather*} Then $( \boldsymbol{q}= ( q_i )_i$, $\boldsymbol{p}= ( p_i )_i$, $\boldsymbol{t}= ( q_a )_a )$ are local coordinates at~$x$ on~$X$. The Hamiltonian $h \colon X \rightarrow \widetilde{X}$ is given by the functions $h_a(\boldsymbol{q},\boldsymbol{p},\boldsymbol{t} )$. Note that
\begin{gather*}
\omega_{X} = \sum\limits_{i=1}^{\dim Y_s} d p_i \wedge d q_i + \sum\limits_{a=1}^{\dim S} d h_a(\boldsymbol{q},\boldsymbol{p},\boldsymbol{t} ) \wedge d t_a .
\end{gather*} Put
\begin{gather*} v_{h_a}= \partial_{t_a} + \sum\limits_{i=1}^{\dim Y_s} \partial_{q_i}(h_a(\boldsymbol{q},\boldsymbol{p},\boldsymbol{t} )) \partial_{p_i} -\partial_{p_i}(h_a(\boldsymbol{q},\boldsymbol{p},\boldsymbol{t} ))\partial_{q_i}. \end{gather*} We can check $\omega_X(\partial_{p_i} ,v_{h_a} )=\omega_X(\partial_{q_i} ,v_{h_a} )=0$
easily. Moreover we have $\omega_X(\partial_{t_b} ,v_{h_a} )=0$, $a,b=1,\ldots,\dim S$, since for each $x\in X$ the form $(\omega_X)_x \in \bigwedge^2 \Theta^*_{X,x}$ has rank $\dim X -\dim S$. Then we have a description of $\nabla_S$ by $h_a(\boldsymbol{q},\boldsymbol{p},\boldsymbol{t} )$: $\nabla_S(\partial_{t_a})= v_{h_a}$.

In this paper, we consider an $S$-Lagrangian triples $(\pi \colon X\rightarrow Y,\omega_X ,\nabla_S )$ associated to isomonodromic deformations of parabolic connections (which are logarithmic connection with quasi-parabolic structures). The isomonodromic deformations of parabolic connections have been investigated by Inaba--Iwasaki--Saito~\cite{IIS} and Inaba~\cite{Inaba}. In our case, $X$ is a moduli space of pointed smooth projective curves and parabolic connections (see \cite[Theorem~2.1]{Inaba} and~\cite{IIS}), $Y$~is a moduli space of pointed smooth projective curves and quasi-parabolic bundles admitting a parabolic connection, and $S$ is a moduli space of pointed smooth projective curves. We have projections $p_X\colon X \rightarrow S$, $p_Y\colon Y \rightarrow S$ and $\pi \colon X \rightarrow Y$. The moduli space $X$ has the relative symplectic form $\omega$ over $S$ (see \cite[Section~7]{Inaba}). The $p_X$-connection $\nabla_S$ is given by the isomonodromic deformations of parabolic connections (see \cite[Proposition~8.1]{Inaba}). The main result of this paper is to construct the corresponding twisted cotangent bundle $\widetilde{X}$ over $Y$ \textit{by using computation of \v{C}ech cohomologies}. We construct the twisted cotangent bundle with the remark in mind: The twisted cotangent bundle $\widetilde{X}$ over $Y$ is isomorphic to the fiber product $X\times_S T^*S$.

Our argument is as follows. First, we consider the fiber product $X\times_S T^*S$ (which called \textit{extended phase space}, see \cite[Section~7]{Hurt}). The fiber product $X\times_S T^*S$ is \textit{the moduli space of} (\textit{pointed smooth projective curves and}) \textit{parabolic connections with a quadratic differential}. We describe the tangent sheaf of $X\times_S T^*S$ and the symplectic form on $X\times_S T^*S$ by the \v{C}ech cohomology (Propositions~\ref{Tangent sheaf Cech 1} and~\ref{Prop symplectic}). Second, we describe the cotangent sheaf $\Omega_{Y}^1$ by the \v{C}ech cohomology, and we define an $\Omega_{Y}^1$-action on $X\times_S T^*S$ explicitly (Definition~\ref{Omega action}). We show that by this $\Omega_{Y}^1$-action and the symplectic form, $X\times_S T^*S$ is a twisted cotangent bundle over~$Y$ (Theorem~\ref{Main Thm 2}). The section $X \rightarrow X\times_S T^*S$ given by the zero section of $T^*S \rightarrow S$ is the Hamiltonian of the Hamiltonian datum.

A twisted cotangent bundle over $Y$ is important for studying quantizations of isomonodromic deformations. In fact, quantizations of isomonodromic deformations may be described by using certain algebras of twisted differential operators, which are quantizations of twisted cotangent bundles (see \cite{BK, BF}). It is expected that the results of this paper are useful to understand quantizations of isomonodromic deformations in the context of a certain algebro-geometric way such as \cite{Inaba,IIS}.

The organization of this paper is as follows. In Section \ref{Pre}, we recall basic definitions and basic facts on parabolic connections (in Section~\ref{SS ParaConn}), Atiyah algebras (in Section~\ref{SS AA}) and twisted cotangent bundles (in Section~\ref{TDO on SAV}). In Section~\ref{Moduli of ParaConn with QuadDiff}, we treat moduli spaces of parabolic connections with a~quadratic differential. First, we describe the tangent sheaves of these moduli spaces in terms of the hypercohomology of a certain complex. Second, we endow the moduli spaces with symplectic structures. In Section~\ref{Section Twisted Cotangent Bundle}, we see that the moduli spaces of parabolic connections with a quadratic differential are equipped with structures of twisted cotangent bundles.

\section{Preliminaries}\label{Pre}

\subsection{Moduli space of stable parabolic connections}\label{SS ParaConn}
Following \cite{Inaba}, we recall basic definitions and basic facts on parabolic connections. Let $C$ be a~smooth projective curve of genus $g$. We put
\begin{gather*}
T_n := \{ (t_1, \ldots,t_n ) \in C \times \cdots \times C \,|\, t_i \neq t_j \ \text{for $i\neq j$} \}
\end{gather*}
for a positive integer $n$. For integers $e$, $r$ with $r>0$, we put
\begin{gather*}
N^{(n)}_r(e) := \bigg\{ \big(\nu^{(i)}_j\big)^{1\le i \le n}_{0\le j \le r-1} \in \mathbb{C}^{nr} \, \bigg| \, e+ \sum_{i,j} \nu^{(i)}_j =0 \bigg\}.
\end{gather*}
Take members $\boldsymbol{t}= (t_1,\ldots,t_n) \in T_n$ and $\boldsymbol{\nu}=\big( \nu^{(i)}_j\big)_{1\le i \le n,0\le j \le r-1} \in N^{(n)}_r(e)$.

\begin{Definition} We say $\big(E,\nabla, \big\{ l_*^{(i)} \big\}_{1\le i\le n}\big)$ is a \textit{$(\boldsymbol{t} , \boldsymbol{\nu})$-parabolic connection of rank~$r$ and degree~$e$ over~$C$} if
\begin{itemize}\itemsep=0pt
\item[(1)] $E$ is a rank $r$ algebraic vector bundle on $C$,
\item[(2)] $\nabla \colon E \rightarrow E \otimes \Omega^1_C(t_1 + \cdots + t_n)$ is a connection, that is, $\nabla$ is a $\mathbb{C}$-linear homomorphism of sheaves satisfying $\nabla(fa)=a \otimes df + f \nabla(a)$ for $f \in \mathcal{O}_C$ and $a \in E$, and
\item[(3)] for each $t_i$, $l_*^{(i)}$ is a filtration $E|_{t_i} = l_0^{(i)} \supset l_1^{(i)} \supset \cdots \supset l_r^{(i)}=0$ such that $\dim \big(l_j^{(i)}/l_{j+1}^{(i)}\big)=1$ and $\big({\sf res}_{t_i}(\nabla)-\nu^{(i)}_j \mathrm{id}_{E|_{t_i}}\big) \big(l^{(i)}_j\big) \subset l_{j+1}^{(i)}$ for $j=0,\ldots,r-1$.
\end{itemize}
\end{Definition}

\begin{Remark}We have
\begin{gather*}
\deg E = \deg (\det(E)) = -\sum^n_{i=1} \mathrm{tr} ({\sf res}_{t_i} (\nabla)) = - \sum^n_{i=1} \sum^{r-1}_{j=0} \nu_j^{(i)} =e.
\end{gather*}
\end{Remark}

\begin{Definition}[{\cite[Definition 2.3]{Inaba}}] Take an element $\boldsymbol{\nu} \in N^{(n)}_r(e)$. We call $\boldsymbol{\nu}$ \textit{special} if
\begin{itemize}\itemsep=0pt
\item[(1)] $\nu_j^{(i)}-\nu_k^{(i)} \in \mathbb{Z}$ for some $i$ and $j\neq k$, or
\item[(2)] there exists an integer $s$ with $1<s<r$ and a subset $\{ j_1^i,\ldots,j_s^i \} \subset \{ 0,\ldots,r-1 \}$ for each $1\le i \le n$ such that $\sum\limits_{i=1}^{n} \sum\limits^{s}_{k=1} \nu_{j^i_k}^{(i)} \in \mathbb{Z}$.
\end{itemize}
We call $\boldsymbol{\nu}$ \textit{generic} if it is not special.
\end{Definition}

Take rational numbers $0 < \alpha^{(i)}_1 < \alpha^{(i)}_2 < \cdots < \alpha^{(i)}_r <1$ for $i=1,\ldots , n$ satisfying $\alpha^{(i)}_j \neq \alpha^{(i')}_{j'}$ for $(i,j) \neq (i', j')$. We choose a sufficiently generic $\boldsymbol{\alpha}=(\alpha_j^{(i)})$.

\begin{Definition}A parabolic connection $\big(E,\nabla, \big\{ l^{(i)}_* \big\}_{1\le i \le n}\big)$ is \textit{$\boldsymbol{\alpha}$-stable} (resp.\ \textit{$\boldsymbol{\alpha}$-semistable}) if for any proper nonzero subbundle $F\subset E$ satisfying $\nabla(F) \subset F \otimes \Omega^1_C(t_1 + \cdots +t_n)$, the inequality
\begin{gather*}
\frac{\deg F+\sum\limits^n_{i=1}\sum\limits^r_{j=1} \alpha^{(i)}_j \dim \big( \big(F|_{t_i} \cap l_{j-1}^{(i)}\big)/\big(F|_{t_i}\cap l_j^{(i)}\big) \big) }{\operatorname{rank} F} \\
\qquad{} \underset{(\text{resp. $\le$})}{<} \frac{\deg E+\sum\limits^n_{i=1}\sum\limits^r_{j=1}\alpha^{(i)}_j \dim \big( l_{j-1}^{(i)}/ l_j^{(i)} \big)}{ \operatorname{rank} E}
\end{gather*}
holds.
\end{Definition}

Let $\tilde{M}_{g,n}$ be a smooth algebraic scheme which is a certain covering of the moduli stack of $n$-pointed smooth projective curves of genus $g$ over $\mathbb{C}$ and take a universal family $\big(\mathcal{C},\tilde{t}_1, \ldots,\tilde{t}_n\big)$ over $\tilde{M}_{g,n}$.

\begin{Definition}We denote the pull-back of $\mathcal{C}$ and $\tilde{\boldsymbol{t}}$ by the morphism $\tilde{M}_{g,n} \times N^{(n)}_r(e) \rightarrow \tilde{M}_{g,n}$ by the same character $\mathcal{C}$ and $\tilde{\boldsymbol{t}}=\big\{ \tilde{t}_1 ,\ldots,\tilde{t}_n \big\}$. Then $D\big(\tilde{\boldsymbol{t}}\big):=\tilde{t}_1 +\cdots+\tilde{t}_n$ becomes an effective Cartier divisor on $\mathcal{C}$ flat over $\tilde{M}_{g,n} \times N^{(n)}_r(e)$. We also denote by $\tilde{\boldsymbol{\nu}}$ the pull-back of the universal family on $N^{(n)}_r(e)$ by the morphism $\tilde{M}_{g,n} \times N^{(n)}_r(e) \rightarrow N^{(n)}_r(e)$. We define a functor $\mathcal{M}_{\mathcal{C}/\tilde{M}_{g,n}}^{\boldsymbol{\alpha}}\big(\tilde{\boldsymbol{t}},r,e\big)$ from the category of locally noetherian schemes over $\tilde{M}_{g,n} \times N^{(n)}_r(e)$ to the category of sets by
\begin{gather*}
\mathcal{M}_{\mathcal{C}/\tilde{M}_{g,n}}^{\boldsymbol{\alpha}}\big(\tilde{\boldsymbol{t}},r,e\big)(S):=\big\{ \big(E,\nabla, \big\{l^{(i)}_j \big\}\big) \big\}/{\sim}
\end{gather*}
for a locally noetherian scheme $S$ over $\tilde{M}_{g,n} \times N^{(n)}_r(e)$, where
\begin{itemize}\itemsep=0pt
\item[(1)] $E$ is a rank $r$ algebraic vector bundle on $\mathcal{C}_S$,
\item[(2)] $\nabla \colon E \rightarrow E \otimes \Omega^1_{\mathcal{C}_S/S}\big(D\big(\tilde{\boldsymbol{t}}\big)_S\big)$ is a relative connection,
\item[(3)] for each $(\tilde{t}_i)_S$, $l_*^{(i)}$ is a filtration by subbundles $E|_{(\tilde{t}_i)_S} = l_0^{(i)} \supset l_1^{(i)} \supset \cdots \supset l_r^{(i)}=0$ such that $\big({\sf res}_{(\tilde{t}_i)_S}(\nabla)-\big(\tilde{\nu}^{(i)}_j\big)_S \mathrm{id}_{E|_{t_i}}\big) \big(l^{(i)}_j\big) \subset l_{j+1}^{(i)}$ for $j=0,\ldots,r-1$, and
\item[(4)] for any geometric point $s \in S$, $\dim \big(l_j^{(i)}/l_{j+1}^{(i)}\big)\otimes k(s)=1$ for any $i$, $j$ and $\big(E,\nabla,\big\{ l^{(i)}_j \big\}\big) \otimes k(s)$ is $\boldsymbol{\alpha}$-stable.
\end{itemize}
Here $\big(E, \nabla,\big\{ l^{(i)}_j \big\}\big)\sim \big(E', \nabla',\big\{ l'^{(i)}_j \big\}\big)$ if there exist a line bundle $\mathcal{L}$ on $S$ and an isomorphism $\sigma \colon E \xrightarrow{\sim} E'\otimes \mathcal{L}$ such that $\sigma|_{t_i}\big(l^{(i)}_J\big) = l'^{(i)}_j \otimes \mathcal{L}$ for any $i$, $j$ and the diagram{\samepage
\begin{gather*}
\xymatrix{
E \ar[r]^-{\nabla} \ar[d]^-{\sigma} & E \otimes \Omega^1_{\mathcal{C}/T}\big(D\big(\tilde{\boldsymbol{t}}\big)\big)
\ar[d]^-{\sigma\otimes \mathrm{id}} \\
E' \otimes \mathcal{L} \ar[r]^-{\nabla'\otimes \mathcal{L}} & E' \otimes \Omega^1_{\mathcal{C}/T}\big(D\big(\tilde{\boldsymbol{t}}\big)\big)\otimes \mathcal{L}}
\end{gather*}
commutes.}
\end{Definition}

\begin{Theorem}[{\cite[Theorem 2.1]{Inaba}}]For the moduli functor $\mathcal{M}_{\mathcal{C}/\tilde{M}_{g,n}}^{\boldsymbol{\alpha}}\big(\tilde{\boldsymbol{t}},r,e\big)$,
there exists a fine moduli scheme
\begin{gather*}
M_{\mathcal{C}/\tilde{M}_{g,n}}^{\boldsymbol{\alpha}}\big(\tilde{\boldsymbol{t}},r,e\big) \longrightarrow \tilde{M}_{g,n} \times N^{(n)}_r(e)
\end{gather*}
of $\boldsymbol{\alpha}$-stable parabolic connections of rank~$r$ and degree~$e$, which is smooth and quasi-projective. The fiber $M_{C_x}^{\boldsymbol{\alpha}}\big(\tilde{\boldsymbol{t}}_x, \boldsymbol{\nu}\big)$ over $(x,\boldsymbol{\nu}) \in \tilde{M}_{g,n} \times N^{(n)}_r(e)$ is the moduli space of $\boldsymbol{\alpha}$-stable $\big(\tilde{\boldsymbol{t}}_x, \boldsymbol{\nu}\big)$-parabolic connections whose dimension is $2r^2(g-1) +nr(r-1)+2$ if it is non-empty.
\end{Theorem}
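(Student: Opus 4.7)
The plan is to adapt Simpson's GIT construction for moduli of sheaves with extra structure to the parabolic-connection setting, then read off smoothness and dimension from the standard deformation complex.

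First I would establish boundedness of the family of $\boldsymbol{\alpha}$-stable $(\tilde{\boldsymbol{t}},\boldsymbol{\nu})$-parabolic connections in the Maruyama--Yokogawa style: the $\boldsymbol{\alpha}$-stability hypothesis (required only against $\nabla$-invariant subbundles) forces enough control on the Harder--Narasimhan filtration of the underlying bundle once $\boldsymbol{\alpha}$ is chosen generically. After twisting by a high power of a relative ample divisor so that $E(m)$ is fiberwise globally generated with vanishing higher cohomology, I would encode the data in a relative Quot scheme together with the closed conditions that (i)~$\nabla$ is a section of the subsheaf of $\operatorname{Diff}^1\big(E, E\otimes\Omega^1_{\mathcal{C}/S}\big(D\big(\tilde{\boldsymbol{t}}\big)\big)\big)$ with identity symbol, (ii)~at each~$\tilde{t}_i$ one has a point in the relative full-flag bundle of $E|_{\tilde{t}_i}$, and (iii)~the residues satisfy $\big({\sf res}_{\tilde{t}_i}(\nabla) - \tilde{\nu}^{(i)}_j\operatorname{id}\big)\big(l^{(i)}_j\big) \subset l^{(i)}_{j+1}$. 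The resulting quasi-projective parameter scheme $Q\to \tilde{M}_{g,n}\times N^{(n)}_r(e)$ carries a natural $\operatorname{GL}$-action, and matching $\boldsymbol{\alpha}$-stability with GIT stability for a linearization built from the parabolic weights and the Hilbert polynomial produces a quasi-projective GIT quotient. The equivalence by line-bundle twists built into the moduli functor absorbs the $\mathbb{G}_m$-stabilizer of a stable orbit, upgrading the coarse quotient to a \emph{fine} moduli scheme.

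Second, for smoothness and the dimension I would analyze infinitesimal deformations through the two-term hypercohomology complex
\begin{gather*}
\mathcal{F}^\bullet\colon\mathcal{E}nd^{\mathrm{par}}(E) \xrightarrow{[\nabla,\,\cdot\,]} \mathcal{E}nd^{\mathrm{spar}}(E)\otimes\Omega^1_C(D(\boldsymbol{t})),
\end{gather*}
where $\mathcal{E}nd^{\mathrm{par}}(E)$ denotes filtration-preserving endomorphisms and $\mathcal{E}nd^{\mathrm{spar}}(E)$ the strongly parabolic endomorphisms (shifting the filtration by one step) at each $t_i$. The tangent space to a fiber is $\mathbb{H}^1(\mathcal{F}^\bullet)$ and the obstructions lie in $\mathbb{H}^2(\mathcal{F}^\bullet)$. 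The trace pairing combined with Serre duality identifies $\mathbb{H}^2(\mathcal{F}^\bullet)$ with the dual of $\mathbb{H}^0$ of the dual complex, and $\boldsymbol{\alpha}$-stability with the genericity of $\boldsymbol{\nu}$ forces the latter to consist only of scalars, which are in turn absorbed by the line-bundle equivalence in the functor. Hence the obstructions effectively vanish and the fiber is smooth; relative smoothness over the smooth base $\tilde{M}_{g,n}\times N^{(n)}_r(e)$ follows from the same vanishing together with the unobstructed variation of the pointed curve and the exponents. A Riemann--Roch calculation for $\mathcal{F}^\bullet$ using codimensions $r(r-1)/2$ for $\mathcal{E}nd^{\mathrm{par}}$ and $r(r+1)/2$ for $\mathcal{E}nd^{\mathrm{spar}}$ at each marked point, together with the $1$-dimensional contributions from scalars at $\mathbb{H}^0$ and $\mathbb{H}^2$, yields the fiber dimension $2r^2(g-1)+nr(r-1)+2$.

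The main obstacle is the first step: the GIT construction must simultaneously handle the connection (an affine-linear, not linear, condition on $E$), the parabolic weights, and the residue conditions, and one has to verify that the line-bundle twisting equivalence in the functor matches the $\mathbb{G}_m$-orbit structure needed to yield a fine rather than merely coarse moduli scheme. Once the construction and the identification of $\boldsymbol{\alpha}$-stability with GIT stability are in place, the deformation-theoretic smoothness and the dimension count are essentially routine.
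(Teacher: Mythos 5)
This statement is not proved in the paper at all: it is quoted verbatim from Inaba (Theorem~2.1 of \cite{Inaba}), so there is no internal argument to compare with. Your outline does follow the same general strategy as Inaba's original proof (a Simpson/Maruyama--Yokogawa-type GIT construction of the parameter space with the connection, flags and residue conditions imposed on a Quot scheme, followed by deformation theory via the two-term parabolic complex), so as a reconstruction of the cited result it is pointed in the right direction.

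There is, however, a genuine gap in your smoothness step. Serre duality for the complex $\mathcal{E}nd^{\mathrm{par}}(E)\xrightarrow{[\nabla,\cdot]}\mathcal{E}nd^{\mathrm{spar}}(E)\otimes\Omega^1_C(D(\boldsymbol{t}))$ identifies $\mathbb{H}^2$ with the dual of the space of covariantly constant parabolic endomorphisms, which by simplicity (irreducibility, for generic $\boldsymbol{\nu}$) is $\mathbb{C}\cdot\mathrm{id}$; hence $\mathbb{H}^2\cong\mathbb{C}$ is one-dimensional, not zero --- as your own dimension count, which uses $\dim\mathbb{H}^0=\dim\mathbb{H}^2=1$, implicitly acknowledges. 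Saying that these scalars are ``absorbed by the line-bundle equivalence in the functor'' does not make the obstruction vanish: that equivalence concerns line bundles pulled back from the parameter scheme $S$ of a family and only removes the $\mathbb{G}_m$-stabilizer; it does not alter the obstruction space of the deformation problem. What is actually needed is an argument that the obstruction \emph{class} in this one-dimensional space is zero, e.g., by splitting off the trace: the trace part of the deformation problem is that of the pair $(\det E,\operatorname{tr}\nabla)$, a line bundle with connection on a curve, which is unobstructed, while the traceless part has vanishing $\mathbb{H}^2$ by the same duality plus irreducibility; alternatively one checks directly on \v{C}ech cocycles that the obstruction has zero trace. A similar (smaller) gap occurs in the fineness claim: knowing that stable orbits have only scalar stabilizers gives a coarse quotient, and the passage to a fine moduli scheme requires showing that the locally constructed universal families glue once one works modulo line bundles from the base, i.e., a descent argument killing the gerbe class, not merely the observation about stabilizers.
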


\subsection{Atiyah algebras}\label{SS AA}

Following \cite[Section~1]{BS}, we recall the \textit{Atiyah algebra}. Let $C$ be a smooth projective curve, and $\Theta_C$ be the tangent sheaf. Let $E$ be a vector bundle of rank $r$ on $C$. Put $\mathcal{D}_E=\mathcal{D}{\rm iff}(E,E)=\bigcup_i \mathcal{D}_i$, $\mathcal{D}_i$ is the sheaf of differential operators of degree $\le i$ on $E$. We have $\mathcal{D}_i/ \mathcal{D}_{i-1}= \mathcal{E}{\rm nd} (E) \otimes S^i(\Theta_C)$ where $S^i(\Theta_C)$ is the $i$-th symmetric product of $\Theta_C$. Let $\mathrm{symb}_1 \colon \mathcal{D}_1 \rightarrow \mathcal{E}{\rm nd}(E) \otimes \Theta_C$ be the natural morphism $\mathcal{D}_1 \rightarrow \mathcal{D}_1/ \mathcal{O}_C = \mathcal{E}{\rm nd}(E) \otimes \Theta_C$.
\begin{Definition}We define the \textit{Atiyah algebra} of $E$ as
\begin{gather*}
\mathcal{A}_E = \{ \partial \in \mathcal{D}_1 \,|\, \mathrm{symb}_1(\partial) \in \mathrm{id}_E\otimes \Theta_C \subset \mathcal{E}{\rm nd}(E) \otimes \Theta_C \}.
\end{gather*}
Here, for $v\in \mathcal{D}_1$, $\mathrm{symb}_1(v)$ is the symbol of the differential operator $v$.
\end{Definition}
We have inclusions $\mathcal{D}_0= \mathcal{E}{\rm nd} (E) \subset \mathcal{A}_E \subset \mathcal{D}_1$ and the short exact sequence
\begin{gather*}\label{ES Atiyah}
0\longrightarrow \mathcal{E}{\rm nd} (E) \longrightarrow \mathcal{A}_E \xrightarrow{\mathrm{symb}_1} \Theta_C \longrightarrow 0.
\end{gather*}
Fix a positive integer $n$. Let $D=t_1+\cdots+t_n$ be an effective divisor of $C$ where $t_1 ,\ldots ,t_n$ are distinct points of $C$. We put $\mathcal{A}_E(D):=\mathrm{symb}_1^{-1} (\Theta_C(-D))$. Then we have the following exact sequence
\begin{gather*}
0\longrightarrow \mathcal{E}{\rm nd} (E) \longrightarrow \mathcal{A}_E(D) \xrightarrow{\mathrm{symb}_1} \Theta_C(-D) \longrightarrow 0.
\end{gather*}

For a connection $\nabla \colon E \rightarrow E \otimes \Omega^1_{C}(D)$, we define a splitting
\begin{gather}\label{splittingnabla}
\iota (\nabla) \colon \ \Theta_C(-D) \longrightarrow \mathcal{A}_E(D)
\end{gather}
as follows. Let $U$ be an affine open subset of $C$ where we have a trivialization $E|_{U} \cong \mathcal{O}_U^{\oplus r}$. We denote by $Af^{-1} df$ a connection matrix of $\nabla$ on $U$ where $f$ is a local defining equation of~$t_i$ and $A \in M_r (\mathcal{O}_U)$. For an element $g\frac{\partial}{\partial f} \in \Theta_C(-D)(U)$, we define the element $\iota (\nabla)\big(g\frac{\partial}{\partial f}\big) := g \big( \frac{\partial}{\partial f} + A f^{-1} \big) \in \mathcal{A}_E(D)(U)$, which gives a map $\iota(\nabla)(U)\colon \Theta_C(-D)(U) \rightarrow \mathcal{A}_E(D)(U)$. By this map, we obtain the splitting~(\ref{splittingnabla}).

\subsection{Twisted cotangent bundles}\label{TDO on SAV}
Following \cite[Section 2]{BB}, we recall the definition of $\Omega^{\ge 1}_X$-torsors and recall the correspondence between twisted cotangent bundles and $\Omega^{\ge 1}_X$-torsors. Let $X$ be a smooth algebraic variety over~$\mathbb{C}$.

\begin{Definition} Let $d\colon A^n \rightarrow A^{n+1}$ be a morphism of sheaves of abelian groups on $X$, considered as length $2$ complex $A^{\bullet}$ supported in degree $n$ and $n+1$. An \textit{$A^{\bullet}$-torsor} is a pair $(\mathcal{F}, c)$, where $\mathcal{F}$ is an $A^n$-torsor and $c\colon \mathcal{F} \rightarrow A^{n+1}$ is a map such that $c(a + \phi) = d(a) + c(\phi)$ for $a \in A^n, \phi \in \mathcal{F}$.
\end{Definition}

Let $\Omega^{\ge 1}_X := \big(\Omega_X^1 \rightarrow \Omega_X^{2{\rm cl}}\big)$ be the truncated de Rham complex, where $\Omega_X^{2{\rm cl}}$ are closed $2$-forms on $X$. For example, let $T^*X\rightarrow X$ be the cotangent bundle of $X$ and $\theta_X$ be the canonical 1-form on $T^*X$. The cotangent sheaf $\Omega_X^1$ of $X$ is an $\Omega_X^1$-torsor, which is trivial. For the~$\Omega_X^1$-torsor~$\Omega_X^1$, we define a map $c\colon \Omega_X^1 \rightarrow \Omega_X^{2{\rm cl}}$ as follows. Let $U$ be a Zariski open set over~$X$. We assigne $\gamma \in \Omega_X^1(U)$ (which is a section $\gamma \colon U \rightarrow T^*X$ of $T^*X\rightarrow X$ on $U$) to $\gamma^* d \theta_X \in \Omega_X^{2{\rm cl}}(U)$. For $\gamma, \gamma' \in \Omega_X^1(U)$, we have $c(\gamma+ \gamma')-c(\gamma) = (\gamma+ \gamma')^*d \theta_X -(\gamma)^*d \theta_X = d(\gamma+ \gamma')- d \gamma =d \gamma'$. Then the pair $(\Omega_X^1, c)$ is an $\Omega^{\ge 1}_X$-torsor.

We recall the correspondence between twisted cotangent bundles and $\Omega^{\ge 1}_X$-torsors. For any morphism $f \colon X \rightarrow Y$ between algebraic varieties, let $\Gamma(f)$ be the sheaf of set on $Y$ where $\Gamma(f)(U)$ is the set of sections of $f$ over $U$ for each open set $U\subset Y$. If $f \colon X \rightarrow Y$ is a~$T^*Y$-torsor, then $\Gamma(f)$ is an $\Omega_Y^1$-torsor. We consider a twisted cotangent bundle $\pi_{\phi} \colon\phi\rightarrow X$. Then $\Gamma(\pi_{\phi})$ is an $\Omega^1_X$-torsor. We define a map $c \colon \Gamma(\pi_{\phi}) \rightarrow \Omega_X^{2{\rm cl}}$ by $c(\gamma):= \gamma^* (\omega_{\phi})$. We have $c(a+\gamma)-c(\gamma)= \gamma^* t_{a}^* (\omega_{\phi}) - \gamma^* (\omega_{\phi})= da$. Then $(\Gamma(\pi_{\phi}),c)$ is an $\Omega^{\ge 1}_X$-torsor. Conversely, for an $\Omega^{\ge 1}_X$-torsor $(\mathcal{F}, c)$, let $\pi_{\phi}\colon \phi \rightarrow X$ be the space of the torsor $\mathcal{F}$. The symplectic form is defined as the unique form such that for a section $\gamma \in \mathcal{F}$ of $\pi_{\phi}$ the corresponding isomorphism $T^*X \xrightarrow{\sim} \phi$; $0 \rightarrow \gamma$, identi\-fies~$\omega_{\phi}$ with $\omega + \pi^* c(\gamma)$. Here $\omega$ is the canonical symplectic form on the cotangent bundle~$T^*X$.

\section[Moduli scheme of parabolic connections with a quadratic differential]{Moduli scheme of parabolic connections\\ with a quadratic differential}\label{Moduli of ParaConn with QuadDiff}

In this section, we study the moduli space of parabolic connections with a quadratic differential, which is generalization of the moduli space of parabolic connections studied by Inaba--Iwasaki--Saito \cite{IIS} and Inaba \cite{Inaba}. In Section~\ref{SS infinit defor}, we describe the (algebraic) tangent sheaf of this moduli space in terms of the hypercohomology of a certain complex by generalization of the description of the tangent sheaf
of the moduli space of parabolic connections in \cite{Inaba,IIS,Komyo}. Moreover, we describe the analytic tangent sheaf in terms of the hypercohomology of a certain analytic complex as in \cite[Section~7]{Inaba}. This description is more simple than the algebraic one. In Section~\ref{SS Isomonod defor}, we recall the description of the vector fields associated to the isomonodromic deformations in terms of the description of the (algebraic) tangent sheaf as in \cite[Section~6]{Hurt} and \cite[Section~3.3]{Komyo}. In Section~\ref{SS Sympl and Hamilt}, we show that the moduli space of parabolic connections with a~quadratic differential is endowed with a symplectic structures. This is the main purpose of this section. In Section~\ref{Extended phase space}, we consider moduli spaces of parabolic connections with a~quadratic differential as extended phase spaces of isomonodromic deformations. The classical trick of turning a time dependent Hamiltonian flow into an autonomous one by adding variables is well-known. In this trick, the space given by adding the variables to a phase space is called an extended phase space. (Hamiltonians of isomonodromic deformations are time dependent.)

Hurtubise \cite{Hurt} also studied the moduli space of connections with a quadratic differential. In \cite[Section~7]{Hurt}, the moduli space of connections with a quadratic differential is decomposed locally into a product of the symplectic manifolds: the moduli space of connections on a fixed curve and the cotangent bundle of the moduli space of curves. This local decomposition is given by using the isomonodromic deformation of the ``background connection'', which is discussed in \cite[Section~6]{Hurt}. Then we can show that the moduli space of connections with a quadratic differential is endowed with a symplectic structure locally. Moreover, in~\cite{Hurt} the section of the map from the moduli space of connections with a quadratic differential to the moduli space of connections is defined by using the Hamiltonians defined in \cite[Section~6]{Hurt}. On the other hand, in our argument we use the ordinary isomonodromic deformation instead of the isomonodromic deformation of the background connection. Then we have an algebraic symplectic structure on the moduli space of (parabolic) connections with a quadratic differential globally. Our corresponding section of the map from the moduli space of connections with a quadratic differential to the moduli space of connections is defined by using the zero section of the map from the moduli space of curves with a quadratic differential to the moduli space of curves.

\subsection{Moduli space of stable parabolic connections with a quadratic differential}
Let $T^*\tilde{M}_{g,n}$ be the total space of the cotangent bundle of $\tilde{M}_{g,n}$. We denote by $\widehat{M}_{\mathcal{C}/\tilde{M}_{g,n}}^{\boldsymbol{\alpha}}\big(\tilde{\boldsymbol{t}},r,e\big)$ the fiber product of $T^*\tilde{M}_{g,n} \times N^{(n)}_r(e)$ and $M_{\mathcal{C}/\tilde{M}_{g,n}}^{\boldsymbol{\alpha}}\big(\tilde{\boldsymbol{t}},r,e\big)$ over $\tilde{M}_{g,n} \times N^{(n)}_r(e)$:
\begin{gather*}
\xymatrix{\widehat{M}_{\mathcal{C}/\tilde{M}_{g,n}}^{\boldsymbol{\alpha}}\big(\tilde{\boldsymbol{t}},r,e\big) \ar[d]_-{\hat{\pi}} \ar[r] &
M_{\mathcal{C}/\tilde{M}_{g,n}}^{\boldsymbol{\alpha}}\big(\tilde{\boldsymbol{t}},r,e\big) \ar[d]^-{\pi} \\
 T^*\tilde{M}_{g,n} \times N^{(n)}_r(e) \ar[r] & \tilde{M}_{g,n} \times N^{(n)}_r(e).}
\end{gather*}
We call the fiber product $\widehat{M}_{\mathcal{C}/\tilde{M}_{g,n}}^{\boldsymbol{\alpha}}\big(\tilde{\boldsymbol{t}},r,e\big)$ the \textit{moduli space of $\boldsymbol{\alpha}$-stable parabolic connections with a quadratic differential}. If we take a zero section of $T^*\tilde{M}_{g,n} \rightarrow \tilde{M}_{g,n}$, then we have an inclusion
\begin{gather*}\label{zerosectionmap}
M_{\mathcal{C}/\tilde{M}_{g,n}}^{\boldsymbol{\alpha}}\big(\tilde{\boldsymbol{t}},r,e\big) \longrightarrow \widehat{M}_{\mathcal{C}/\tilde{M}_{g,n}}^{\boldsymbol{\alpha}}\big(\tilde{\boldsymbol{t}},r,e\big).
\end{gather*}

Let $(C, \boldsymbol{t}) \in \tilde{M}_{g,n}$. The tangent space of $\tilde{M}_{g,n}$ at $(C,\boldsymbol{t})$ is isomorphic to $H^1 (C, \Theta_C(-D(\boldsymbol{t})))$. By the Serre duality, the cotangent space at $(C,\boldsymbol{t})$ is isomorphic to $H^0 \big(C, \Omega^{\otimes 2}_C(D(\boldsymbol{t}))\big)$, which is the space of (global) \textit{quadratic differentials on $(C, \boldsymbol{t})$}.

\subsection{Infinitesimal deformations}\label{SS infinit defor}
For simplicity, we put $\widehat{M}:=\widehat{M}_{\mathcal{C}/\tilde{M}_{g,n}}^{\boldsymbol{\alpha}}\big(\tilde{\boldsymbol{t}},r,e\big)$ and $\mathcal{C}_{\widehat{M}} := \mathcal{C} \times_{\tilde{M}_{g,n}} \widehat{M}$.
Let $\big(\tilde{E}, \tilde{\nabla}, \big\{ \tilde{l}^{(i)}_j \big\}, \tilde{\psi}\big)$ be a~universal family on~$\mathcal{C}_{\widehat{M}}$. Let $\mathcal{A}_{\tilde{E}}\big(D\big(\tilde{\boldsymbol{t}}\big)\big)$ be the relative Atiyah algebra which is the extension
\begin{gather*}
\xymatrix{0\ar[r] & \mathcal{E}{\rm nd}\big(\tilde{E}\big) \ar[r] & \mathcal{A}_{\tilde{E}}\big(D\big(\tilde{\boldsymbol{t}}\big)\big) \ar[r]^-{\mathrm{symb}_1}
& \Theta_{\mathcal{C}_{\widehat{M}}/ \widehat{M}} \big({-}D\big(\tilde{\boldsymbol{t}}\big)\big)
\ar[r] \ar@/^5pt/[l]^-{\iota\big(\tilde{\nabla}\big)} & 0,}
\end{gather*}
where $\iota\big(\tilde{\nabla}\big)\colon \Theta_{\mathcal{C}_{\widehat{M}}/ \widehat{M}} \big({-}D\big(\tilde{\boldsymbol{t}}\big)\big)\rightarrow \mathcal{A}_{\tilde{E}}\big(D\big(\tilde{\boldsymbol{t}}\big)\big)$ is the $\mathcal{O}_X$-linear section of $\mathrm{symb}_1$ associated to the relative connection~$\tilde{\nabla}$. We put
\begin{gather*}
\widetilde{\mathcal{F}}^0 := \big\{ s \in \mathcal{E}{\rm nd}\big(\tilde{E}\big) \,\big|\, s |_{\tilde{t}_i\times \widehat{M}} \big(\tilde{l}^{(i)}_j\big) \subset \tilde{l}^{(i)}_j \text{ for any $i$, $j$} \big\}\qquad \text{and}\\
\mathcal{F}^0 := \big\{ s \in \mathcal{A}_{\tilde{E}}\big(D\big(\tilde{\boldsymbol{t}}\big)\big) \,\big|\, (s - \iota\big(\tilde{\nabla}\big) \circ
 \mathrm{symb}_1(s)) |_{\tilde{t}_i\times \widehat{M}} \big(\tilde{l}^{(i)}_j\big) \subset \tilde{l}^{(i)}_j \text{ for any $i$, $j$} \big\}.
\end{gather*}
Then we have an extension
\begin{gather*}\label{extension F0}
\xymatrix{
0\ar[r] & \widetilde{\mathcal{F}}^0 \ar[r] & \mathcal{F}^0 \ar[r]^-{\mathrm{symb}_1}
& \Theta_{\mathcal{C}_{\widehat{M}}/ \widehat{M}} \big({-}D\big(\tilde{\boldsymbol{t}}\big)\big)
\ar[r] \ar@/^5pt/[l]^-{\iota\big(\tilde{\nabla}\big)} & 0.}
\end{gather*}
We put
\begin{gather*}
\widetilde{\mathcal{F}}^1 := \big\{ s \in \mathcal{E}{\rm nd}\big(\tilde{E}\big) \otimes\Omega^1_{\mathcal{C}_{\widehat{M}}/ \widehat{M}} \big(D\big(\tilde{\boldsymbol{t}}\big)\big) \,\big| \,
{\sf res}_{\tilde{t}_i\times M_{\mathcal{C}/T}^{\boldsymbol{\alpha}}(\tilde{\boldsymbol{t}} ,r,e)} (s) \big(\tilde{l}^{(i)}_j\big)\subset \tilde{l}^{(i)}_{j+1} \text{ for any $i$, $j$}\big\} \qquad \text{and} \\
\mathcal{F}^1 := \widetilde{\mathcal{F}}^1 \oplus \Omega_{\mathcal{C}_{\widehat{M}}/ \widehat{M}}^{\otimes 2} \big(D\big(\tilde{\boldsymbol{t}}\big)\big) .
\end{gather*}
We define a homomorphism $d_{\tilde{\nabla}} \colon \widetilde{\mathcal{F}}^0 \rightarrow \widetilde{\mathcal{F}}^1$ as $s \mapsto \tilde{\nabla} \circ s - s \circ \tilde{\nabla} $ and we define a homomorphism $d_{\tilde{\psi}} \colon \Theta_{\mathcal{C}_{\widehat{M}}/ \widehat{M}} \big({-}D\big(\tilde{\boldsymbol{t}}\big)\big)\rightarrow \Omega_{\mathcal{C}_{\widehat{M}}/ \widehat{M}}^{\otimes 2} \big(D\big(\tilde{\boldsymbol{t}}\big)\big)$ as follows. Take an affine open covering $\{ U_{\alpha} \}$ of $\mathcal{C}_{\widehat{M}}$ such that we can take trivializations of $\Theta_{\mathcal{C}_{\widehat{M}}/ \widehat{M}} \big({-}D\big(\tilde{\boldsymbol{t}}\big)\big)$ and $\Omega_{\mathcal{C}_{\widehat{M}}/ \widehat{M}}^{\otimes 2} \big(D\big(\tilde{\boldsymbol{t}}\big)\big)$ on each $U_{\alpha}$. For an element $a \partial/\partial f_{\alpha} \in \Theta_{\mathcal{C}_{\widehat{M}}/ \widehat{M}} \big({-}D\big(\tilde{\boldsymbol{t}}\big)\big) (U_{\alpha})$, we define a homomorphism on $U_{\alpha}$ by
\begin{gather}\label{dpsi Ui}
 a \frac{\partial}{\partial f_{\alpha}}\longmapsto \left( \frac{\partial \psi_{U_{\alpha}}}{\partial f_{\alpha}}a+ 2 \psi_{U_{\alpha}} \frac{\partial a}{\partial f_{\alpha}} \right) d f_{\alpha} \otimes d f_{\alpha}
\in \Omega_{\mathcal{C}_{\widehat{M}}/ \widehat{M}}^{\otimes 2} \big(D\big(\tilde{\boldsymbol{t}}\big)\big)(U_{\alpha}),
\end{gather}
where $\tilde{\psi}|_{U_{\alpha}} = \psi_{U_{\alpha}} d f_{\alpha} \otimes d f_{\alpha}$. By the homomorphism on each $U_{\alpha}$, we can define a homomor\-phism~$d_{\tilde{\psi}}$. We define a complex $\mathcal{F}^{\bullet}$ by the differential $d_{\mathcal{F}^{\bullet}} = (d_{\tilde{\nabla}} ,d_{\tilde{\psi}} ) \circ \big(\mathrm{Id} - \iota\big(\tilde{\nabla}\big) \circ \mathrm{symb}_1,\mathrm{symb}_1\big)$:
\begin{gather*}
\xymatrix{\mathcal{F}^0 \ar[d]_-{(\mathrm{Id} -
\iota(\tilde{\nabla}) \circ \mathrm{symb}_1, \mathrm{symb}_1 )} \ar[rd]^{d_{\mathcal{F}^{\bullet}}} & \\
 \widetilde{\mathcal{F}}^0 \oplus \Theta_{\mathcal{C}_{\widehat{M}}/ \widehat{M}} \big({-}D\big(\tilde{\boldsymbol{t}}\big)\big)
 \ar[r]^-{(d_{\tilde{\nabla}},d_{\tilde{\psi}})}
&\widetilde{\mathcal{F}}^1 \oplus \Omega_{\mathcal{C}_{\widehat{M}}/ \widehat{M}}^{\otimes 2} \big(D\big(\tilde{\boldsymbol{t}}\big)\big) \rlap{.}}
\end{gather*}

\begin{Proposition}\label{Tangent sheaf Cech 1}We put $\widehat{M}=\widehat{M}_{\mathcal{C}/\tilde{M}_{g,n}}^{\boldsymbol{\alpha}}\big(\tilde{\boldsymbol{t}},r,e\big)$ and $M=M_{\mathcal{C}/\tilde{M}_{g,n}}^{\boldsymbol{\alpha}}\big(\tilde{\boldsymbol{t}},r,e\big)$. Let $\mathcal{F}_{M}^{0}$, $\widetilde{\mathcal{F}}_{M}^{0}$, and~$\widetilde{\mathcal{F}}_{M}^{1}$ be the pull-backs of $\mathcal{F}^{0}$, $\widetilde{\mathcal{F}}^{0}$, and $\widetilde{\mathcal{F}}^{1}$ by the natural immersion $\mathcal{C}_M \rightarrow \mathcal{C}_{\widehat{M}}$, respectively. There exist canonical isomorphisms
\begin{gather*}
\hat{\varsigma} \colon \ \Theta_{\widehat{M}/N^{(n)}_r(e)}\xrightarrow{\, \sim \, } \bold{R}^1(\pi_{\widehat{M}})_*(\mathcal{F}^{\bullet}), \\
\tilde{\varsigma} \colon \ \Theta_{M/N^{(n)}_r(e)}\xrightarrow{\, \sim \, } \bold{R}^1(\pi_{M})_*\big(\mathcal{F}_{M}^{0} \rightarrow \widetilde{\mathcal{F}}_{M}^1\big), \qquad \text{and}\\
\varsigma \colon \ \Theta_{M/ (\tilde{M}_{g,n} \times N^{(n)}_r(e))} \xrightarrow{\, \sim \, } \bold{R}^1(\pi_{M})_*\big(\widetilde{\mathcal{F}}_M^{0} \rightarrow \widetilde{\mathcal{F}}_M^1\big),
\end{gather*}
where $\pi_{\widehat{M}} \colon \mathcal {C}_{\widehat{M}} \rightarrow \widehat{M}$ and $\pi_{M} \colon \mathcal{C}_{M} \rightarrow M$ are the natural morphisms.
\end{Proposition}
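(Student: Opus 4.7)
The three isomorphisms are of the same nature: each identifies the tangent sheaf of a moduli space with the first hypercohomology of a complex controlling infinitesimal deformations of the appropriate data. The third isomorphism $\varsigma$ is the classical description of the tangent sheaf of the moduli of parabolic connections over a fixed pointed curve and already appears in \cite{IIS, Inaba, Komyo}, so the real content of the proposition lies in $\tilde{\varsigma}$ and $\hat{\varsigma}$, where the pointed curve (and, for $\hat{\varsigma}$, the quadratic differential) is also allowed to deform. My plan is to construct $\hat{\varsigma}$ directly from Čech representatives of infinitesimal deformations, and then obtain $\tilde{\varsigma}$ and $\varsigma$ by restriction.

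The plan for constructing $\hat{\varsigma}$ is as follows. Let $x\in \widehat{M}$ lie over $(C,\boldsymbol{t},\boldsymbol{\nu},\psi)$. A relative tangent vector $v\in \Theta_{\widehat{M}/N^{(n)}_r(e),x}$ corresponds to a $\mathbb{C}[\epsilon]/(\epsilon^2)$-flat deformation $(\mathcal{C}_\epsilon,\boldsymbol{t}_\epsilon,E_\epsilon,\nabla_\epsilon,\{l^{(i)}_{j,\epsilon}\},\psi_\epsilon)$ of the universal data with $\boldsymbol{\nu}$ held fixed. Choose an affine open cover $\{U_\alpha\}$ of $\mathcal{C}$ together with local coordinates $f_\alpha$ (with $f_\alpha$ a defining equation of any $\tilde{t}_i$ meeting $U_\alpha$), parabolic-compatible trivializations $E|_{U_\alpha}\simeq \mathcal{O}_{U_\alpha}^{\oplus r}$, and compatible lifts $U_\alpha^\epsilon\subset\mathcal{C}_\epsilon$ with coordinates $f_\alpha^\epsilon$ and trivializations of $E_\epsilon|_{U_\alpha^\epsilon}$. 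Expanding the transition data to first order in $\epsilon$ produces: a Kodaira--Spencer cocycle $\theta_{\alpha\beta}=a_{\alpha\beta}\,\partial/\partial f_\alpha\in\Theta_C(-D(\boldsymbol{t}))(U_{\alpha\beta})$ from $f_\alpha^\epsilon-f_\alpha = \epsilon a_{\alpha\beta}$; transition cochains $s_{\alpha\beta}\in \mathcal{F}^0(U_{\alpha\beta})$ with $\mathrm{symb}_1(s_{\alpha\beta})=\theta_{\alpha\beta}$, where the parabolic-preserving condition on the $l^{(i)}_j$ forces $s_{\alpha\beta}-\iota(\tilde\nabla)(\theta_{\alpha\beta})$ to lie in $\widetilde{\mathcal{F}}^0$; local connection variations $\tau_\alpha\in\widetilde{\mathcal{F}}^1(U_\alpha)$ from $\nabla_\epsilon|_{U_\alpha^\epsilon}-\nabla|_{U_\alpha}=\epsilon\tau_\alpha$; and local quadratic-differential variations $\xi_\alpha\in\Omega^{\otimes 2}_C(D(\boldsymbol{t}))(U_\alpha)$ from $\psi_\epsilon|_{U_\alpha^\epsilon}=(\psi_{U_\alpha}+\epsilon\xi_\alpha)df_\alpha^\epsilon\otimes df_\alpha^\epsilon$. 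The compatibility of $\nabla_\epsilon$ with the transitions gives $d_{\tilde{\nabla}}(s_{\alpha\beta}-\iota(\tilde\nabla)\theta_{\alpha\beta})=\tau_\beta-\tau_\alpha$, and expanding $\psi_\epsilon|_{U_{\alpha\beta}^\epsilon}$ in the two charts yields $\xi_\beta-\xi_\alpha=d_{\tilde{\psi}}(\theta_{\alpha\beta})$, where the Lie-derivative formula (\ref{dpsi Ui}) emerges from $d(f_\alpha^\epsilon)\otimes d(f_\alpha^\epsilon)$ rewritten in terms of $df_\beta\otimes df_\beta$. Together, $((s_{\alpha\beta}),(\tau_\alpha,\xi_\alpha))$ is precisely a \v{C}ech hypercocycle for $\mathcal{F}^\bullet$; its class is independent of the choice of cover and trivializations, since different choices differ by hypercoboundaries (changes of trivialization of $E_\epsilon$ shift $s_{\alpha\beta}$ by $\delta$ of a section of $\widetilde{\mathcal{F}}^0$ and shift $\tau_\alpha$ accordingly; refinements of the cover of $\mathcal{C}_\epsilon$ over $\mathcal{C}$ shift $\theta_{\alpha\beta}$, $s_{\alpha\beta}$ and $\xi_\alpha$ by $\delta$ of a section of $\Theta_C(-D)$ and the induced terms). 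This defines $\hat{\varsigma}$, and the same procedure applied with the curve (resp.\ curve and quadratic differential) held infinitesimally fixed yields $\tilde{\varsigma}$ and $\varsigma$.

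To conclude that $\hat{\varsigma}$ is an isomorphism, I would check injectivity and surjectivity on each fiber. Injectivity: if the hypercocycle is a hypercoboundary, the data $s_{\alpha\beta},\tau_\alpha,\xi_\alpha$ are killed by coherent changes of trivialization of $\mathcal{C}_\epsilon$, $E_\epsilon$ and $\psi_\epsilon$, producing an isomorphism between the deformed family and the trivial deformation, so $v=0$. Surjectivity: from a hypercocycle $((s_{\alpha\beta}),(\tau_\alpha,\xi_\alpha))$ one glues $\mathcal{C}_\epsilon$ using $\theta_{\alpha\beta}=\mathrm{symb}_1(s_{\alpha\beta})$, glues $E_\epsilon$ using $\mathrm{id}+\epsilon s_{\alpha\beta}$ (with the parabolic structure determined by the $\widetilde{\mathcal{F}}^0$-part of $s_{\alpha\beta}$), defines $\nabla_\epsilon$ locally as $\nabla+\epsilon\tau_\alpha$, and defines $\psi_\epsilon$ locally as $(\psi_{U_\alpha}+\epsilon\xi_\alpha)df_\alpha^\epsilon\otimes df_\alpha^\epsilon$; the cocycle conditions are exactly what is needed to make these local data glue. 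The main technical obstacle is verifying that the coordinate-change formula for $\psi_\epsilon$ globalizes the local expression (\ref{dpsi Ui}) into an intrinsic map $d_{\tilde{\psi}}\colon \Theta_{\mathcal{C}_{\widehat{M}}/\widehat{M}}(-D(\tilde{\boldsymbol{t}}))\to\Omega^{\otimes 2}_{\mathcal{C}_{\widehat{M}}/\widehat{M}}(D(\tilde{\boldsymbol{t}}))$ -- equivalently, that (\ref{dpsi Ui}) is the Lie derivative $\mathcal{L}_{a\partial/\partial f_\alpha}(\psi_{U_\alpha}df_\alpha\otimes df_\alpha)$, which is coordinate independent; together with the parallel (and classical) check that $s\mapsto s-\iota(\tilde{\nabla})\mathrm{symb}_1(s)$ is well defined on $\mathcal{F}^0$, this is the only non-routine verification. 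Once these two intrinsic descriptions are in hand, the rest of the argument proceeds exactly as in the parabolic case treated in \cite{IIS, Inaba, Komyo}.
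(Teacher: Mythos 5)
Your proposal follows essentially the same route as the paper's proof: extract from a first-order deformation the Kodaira--Spencer cocycle, the bundle transition cochains in $\mathcal{F}^0$, and the local variations of $\tilde{\nabla}$ and $\tilde{\psi}$, verify the \v{C}ech hypercocycle conditions for $\mathcal{F}^{\bullet}$, and invoke the known descriptions from \cite{IIS,Inaba,Komyo} for $\tilde{\varsigma}$ and $\varsigma$. The only cosmetic difference is that the paper carries out the construction over affine opens $\widehat{U}\subset\widehat{M}$ with deformations over $\operatorname{Spec}\mathcal{O}_{\widehat{U}}[\epsilon]$ (giving the sheaf-level isomorphism directly) and leaves the bijectivity check implicit, whereas you phrase it pointwise and spell out the inverse gluing; this does not change the argument.
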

\begin{proof}We show the existence of the isomorphism $\hat{\varsigma}$. For the existence of the isomorphisms $\tilde{\varsigma}$ and~$\varsigma$, see the proof of \cite[Proposition~3.2]{Komyo} and the proof of \cite[Theorem~2.1]{Inaba}. We take an affine open set $\widehat{U}\subset \widehat{M}$. Let $\big(\tilde{E},\tilde{\nabla} ,\big\{ \tilde{l}^{(i)}_j\big\}, \tilde{\psi}\big)$ be the family on $ \mathcal{C} \times_{\tilde{M}_{g,n}} \widehat{U}$. We take an affine open covering $\mathcal{C}_{\widehat{U}} = \bigcup_{\alpha} U_{\alpha}$ such that $\phi_{\alpha} \colon \tilde{E}|_{U_{\alpha}} \xrightarrow{\sim} \mathcal{O}^{\oplus r}_{U_{\alpha}}$ for any $\alpha$, $\sharp\{ i \,|\, \tilde{t}_i |_{\mathcal{C}_U} \cap U_{\alpha} \neq \varnothing \} \le 1$ for any $\alpha$ and $\sharp\{ \alpha \,|\, \tilde{t}_i |_{\mathcal{C}_U} \cap U_{\alpha} \neq \varnothing \} \le 1$ for any~$i$. Take a relative tangent vector field $v \in \Theta_{\widehat{M}/N^{(n)}_r(e)} \big(\widehat{U}\big)$. The field $v$ corresponds to a member $\big( (C_{\epsilon},\boldsymbol{t}_{\epsilon},\psi_{\epsilon}), \big(E_{\epsilon},\nabla_{\epsilon}, \big\{ (l_{\epsilon})^{(i)}_j \big\} \big)\big) \in \widehat{M}(\operatorname{Spec} \mathcal{O}_{\widehat{U}}[\epsilon])$ such that $\big( (C_{\epsilon},\boldsymbol{t}_{\epsilon},\psi_{\epsilon} ), \big(E_{\epsilon},\nabla_{\epsilon},\big\{ (l_{\epsilon})^{(i)}_j \big\} \big)\big)\otimes \mathcal{O}_{\widehat{U}}[\epsilon]/(\epsilon) \cong \big(\big(\mathcal{C}_{\widehat{U}}, \boldsymbol{t}_{\widehat{U}} ,\tilde{\psi}\big), \big(\tilde{E},\tilde{\nabla},\big\{ \tilde{l}^{(i)}_j \big\}\big)\big)$, where $\mathcal{O}_{\widehat{U}}[\epsilon]= \mathcal{O}_{\widehat{U}}[t]/\big(t^2\big)$. Here,
\begin{itemize}\itemsep=0pt
\item $\psi_{\epsilon} \in H^0 \big(C_{\epsilon} , \Omega^{\otimes 2}_{C_{\epsilon}/\widehat{U}} \big(\log\big(D\big(\tilde{\boldsymbol{t}}\big)_{\mathcal{O}_{\widehat{U}} [\epsilon]}\big) \big)\big)$, and
\item $\nabla_{\epsilon} \colon E_{\epsilon} \rightarrow E_{\epsilon} \otimes \Omega^1_{C_{\epsilon} / \widehat{U}} \big(\log\big(D\big(\tilde{\boldsymbol{t}}\big)_{\mathcal{O}_{\widehat{U}} [\epsilon]}\big) \big)$ is a~connection,
\end{itemize}
where we define the sheaf $\Omega^1_{C_{\epsilon}/\widehat{U}} \big(\log\big(D\big(\tilde{\boldsymbol{t}}\big)_{\mathcal{O}_{\widehat{U}} [\epsilon]}\big) \big)$ as the coherent subsheaf of $\Omega^1_{C_\epsilon/\widehat{U}} \big(D\big(\tilde{\boldsymbol{t}}\big)_{\mathcal{O}_{\widehat{U}} [\epsilon]}\big)$ locally generated by $f^{-1} df$ and $d \epsilon$ for a local defining equation~$f$ of $D\big(\tilde{\boldsymbol{t}}\big)_{\mathcal{O}_{\widehat{U}} [\epsilon]}$ which is the pull-back of $D(\tilde{\boldsymbol{t}})$ by the morphism $C_{\epsilon} \rightarrow \mathcal{C}_{\widehat{U}} \rightarrow \mathcal{C}$. Set $U_{\alpha}^{\epsilon}:= U_{\alpha}\times \operatorname{Spec} \mathcal{O}_{\widehat{U}}[\epsilon]$. Let
\begin{gather}\label{muAB}
\mu_{\alpha\beta}(\epsilon) \colon \ U_{\alpha\beta}\times \operatorname{Spec} \mathcal{O}_{\widehat{U}}[\epsilon] \xrightarrow{\, \sim\, } U_{\alpha\beta}\times \operatorname{Spec} \mathcal{O}_{\widehat{U}}[\epsilon]
\end{gather}
be an isomorphism associated to the first-order deformation $C_{\epsilon}$ of $\mathcal{C}_{\widehat{U}}$. The isomorphism $\mu_{\alpha\beta}(\epsilon)$ satisfies
\begin{gather*}
\mu_{\alpha\beta}(\epsilon)^*(\epsilon) = \epsilon , \qquad \mu_{\alpha\beta}(\epsilon)^*(f) = f +\epsilon d_{\alpha\beta} f, \qquad \text{for} \quad f \in \mathcal{O}_{U_{\alpha\beta}},
\end{gather*}
for some $d_{\alpha\beta} \in \Theta_{\mathcal{C}_{\widehat{U}}}(-D)(U_{\alpha\beta})$. We describe $d_{\alpha\beta}$ as $d_{\alpha\beta}= \frac{\partial \mu_{\alpha\beta}(\epsilon)}{\partial \epsilon} \frac{\partial}{\partial f_{\alpha}} \in \Theta_{\mathcal{C}_{\widehat{U}}}(-D)(U_{\alpha\beta})$. Here, $f_{\alpha}$ is a local defining equation of $\tilde{t}_i |_{\mathcal{C}_{\widehat{U}}} \cap U_{\alpha}$.
Set $\phi_{\alpha}^{\epsilon} \colon E_{\epsilon}|_{U^{\epsilon}_{\alpha}} \cong \mathcal{O}^{\oplus r}_{U_{\alpha}^{\epsilon}}$. There is an isomorphism
\begin{gather*}
\varphi_{\alpha} \colon \ E_{\epsilon}|_{U^{\epsilon}_{\alpha}} \xrightarrow[\, \sim\,]{\, \phi_{\alpha}^{\epsilon}\, } \mathcal{O}^{\oplus r}_{U_{\alpha}^{\epsilon}}\xrightarrow[\, \sim\,]{\, \phi_{\alpha}^{-1}\, } \tilde{E}|_{U_{\alpha}} \otimes \mathcal{O}_{\widehat{U}}[\epsilon]
\end{gather*}
such that $\varphi_{\alpha}\otimes \mathcal{O}_{\widehat{U}}[\epsilon]/(\epsilon ) \colon E_{\epsilon}\otimes \mathcal{O}_{\widehat{U}} [\epsilon]/(\epsilon)|_{U_{\alpha}}\xrightarrow{\sim} \tilde{E}|_{U_{\alpha}}\otimes \mathcal{O}_{\widehat{U}} [\epsilon]/(\epsilon)=\tilde{E}|_{U_{\alpha}}$ is the given isomorphism and that $\varphi_{\alpha}|_{t_i\otimes \mathcal{O}_{\widehat{U}}[\epsilon]} ((l_{\epsilon})_j^{(i)})= \tilde{l}_j^{(i)}|_{U_{\alpha} \times \operatorname{Spec} \mathcal{O}_{\widehat{U}}[\epsilon]}$ if $\tilde{t}_i |_{\mathcal{C}_{\widehat{U}}}\cap U_{\alpha} \neq \varnothing$. Put
\begin{gather*}
\theta_{\alpha\beta}(\epsilon) \colon \ \mathcal{O}^{\oplus r}_{U_{\alpha\beta}^{\epsilon}} \xrightarrow[\, \sim\,]{(\phi^{\epsilon}_{\beta})^{-1}|_{U_{\alpha\beta}^{\epsilon} }} E_{\epsilon}|_{U^{\epsilon}_{\alpha\beta}}
\xrightarrow[\, \sim\,]{\phi_{\alpha}^{\epsilon}|_{U^{\epsilon}_{\alpha\beta}}} \mathcal{O}^{\oplus r}_{U_{\alpha\beta}^{\epsilon}} ,
\end{gather*}
which is an element of $\mathcal{E}{\rm nd} \big(\mathcal{O}^{\oplus r}_{U_{\alpha\beta}^{\epsilon}}\big)(U_{\alpha\beta}^{\epsilon})$. We denote $\theta_{\alpha\beta}(\epsilon)$ by
\begin{gather*}
\theta_{\alpha\beta}(\epsilon)= \tilde{\theta}_{\alpha\beta} + \epsilon \frac{\partial \theta_{\alpha\beta}(\epsilon)}{\partial \epsilon},
\qquad \text{where} \quad \tilde{\theta}_{\alpha\beta}, \frac{\partial \theta_{\alpha\beta}(\epsilon)}{\partial \epsilon} \in \mathcal{E}{\rm nd} \big(\mathcal{O}^{\oplus r}_{U_{\alpha\beta}}\big)(U_{\alpha\beta}).
\end{gather*}
Set
\begin{gather*}
\eta_{\alpha\beta} := \frac{\partial \theta_{\alpha\beta}(\epsilon)}{\partial \epsilon} \big(\tilde{\theta}_{\alpha\beta}\big)^{-1}\in \mathcal{E}{\rm nd} \big(\mathcal{O}^{\oplus r}_{U_{\alpha\beta}}\big)(U_{\alpha\beta}).
\end{gather*}
We define elements $u_{\alpha\beta} \in \mathcal{F}^0(U_{\alpha\beta})$ and $(v_{\alpha} ,w_{\alpha})\in \mathcal{F}^1(U_{\alpha})$ by
\begin{gather*}
u_{\alpha\beta} := \ ({\phi}_{\alpha}|_{U_{\alpha\beta}})^{-1} \circ \epsilon (d_{\alpha\beta}+ \eta_{\alpha\beta} ) \circ {\phi}_{\alpha} |_{U_{\alpha\beta}}, \\
 v_{\alpha} := (\varphi_{\alpha} \otimes \mathrm{id}) \circ \nabla_{\epsilon}|_{U_{\alpha}^{\epsilon}} \circ \varphi_{\alpha}^{-1} - \tilde{\nabla}|_{U_{\alpha}^{\epsilon}} \qquad \text{mod $d \epsilon$},\\
w_{\alpha} := \psi_{\epsilon}|_{U_{\alpha}^{\epsilon}} - \tilde{\psi}|_{U_{\alpha}^{\epsilon}} \qquad \text{mod $d \epsilon$},
\end{gather*}
respectively. We can see that
\begin{gather*}
u_{\beta\gamma} - u_{\alpha\gamma } + u_{\alpha\beta}=0, \qquad \text{and} \qquad d_{\mathcal{F}^{\bullet}} (u_{\alpha\beta}) = (v_{\beta},w_{\beta}) -( v_{\alpha}, w_{\alpha}).
\end{gather*}
Then $ [ \{ u_{\alpha\beta} \}, \{ ( v_{\alpha}, w_{\alpha})\}]$ determines an element $\sigma_{\widehat{U}}(v)$ of ${\mathbf H}^1\big(\mathcal{F}_{\widehat{U}}^{\bullet}\big)$. We can check that $v \mapsto \sigma_{\widehat{U}}(v)$ determines an isomorphism
\begin{gather*}
\Theta_{\widehat{M}/N^{(n)}_r(e)} \big(\widehat{U}\big) \xrightarrow{\, \sim \, } {\mathbf H}^1\big(\mathcal{F}_{\widehat{U}}^{\bullet}\big), \qquad v \longmapsto \sigma_{\widehat{U}}(v).
\end{gather*}
We denote by $\hat{\varsigma}_{\widehat{U}}$ this isomorphism. The isomorphism $\hat{\varsigma}_{\widehat{U}}$ induces the desired isomorphism $\hat{\varsigma}$.
\end{proof}

We describe the analytic tangent sheaf in terms of the hypercohomology of a certain analytic complex. Let $\boldsymbol{\nu}$ be an element of $N^{(n)}_r(e)$. Put $\widehat{M}_{\boldsymbol{\nu}} = \widehat{M}_{\mathcal{C}/\tilde{M}_{g,n}}^{\boldsymbol{\alpha}}\big(\tilde{\boldsymbol{t}},r,e\big)_{\boldsymbol{\nu}}$, which is the fiber of $\boldsymbol{\nu}$ under $\widehat{M}_{\mathcal{C}/\tilde{M}_{g,n}}^{\boldsymbol{\alpha}}\big(\tilde{\boldsymbol{t}},r,e\big) \rightarrow N^{(n)}_r(e)$. Let $j \colon \mathcal{C}_{\widehat{M}_{\boldsymbol{\nu}}} \setminus \big\{\tilde{t}_1 ,\ldots, \tilde{t}_n\big\}_{\widehat{M}_{\boldsymbol{\nu}}} \rightarrow \mathcal{C}_{\widehat{M}_{\boldsymbol{\nu}}}$ be the canonical inclusion. Let $\widehat{\boldsymbol{V}} :=\operatorname{Ker} \tilde{\nabla}^{{\rm an}}|_{\mathcal{C}_{\widehat{M}_{\boldsymbol{\nu}}} \setminus \big\{\tilde{t}_1 ,\ldots, \tilde{t}_n\big\}_{\widehat{M}_{\boldsymbol{\nu}}} }$ be the locally constant sheaf of the locally free $(\pi_{\widehat{M}_{\boldsymbol{\nu}}}\circ j)^{-1}\mathcal{O}_{\widehat{M}_{\boldsymbol{\nu}}}$-module associated to the relative analytic connection $\tilde{\nabla}^{{\rm an}}$ on $\mathcal{C}_{\widehat{M}_{\boldsymbol{\nu}}}\setminus \big\{\tilde{t}_1 ,\ldots, \tilde{t}_n\big\}_{\widehat{M}_{\boldsymbol{\nu}}}$, where $\pi_{\widehat{M}_{\boldsymbol{\nu}}} \colon \mathcal{C}_{\widehat{M}_{\boldsymbol{\nu}}} \rightarrow \widehat{M}_{\boldsymbol{\nu}}$ is the natural map.

Assume that $\boldsymbol{\nu}$ is generic. We define a complex $\big(\widehat{\mathcal{F}}^{\bullet}\big)^{{\rm an}}$ by
\begin{gather*}\label{complexFhatan}
\big(\widehat{\mathcal{F}}^{\bullet}\big)^{{\rm an}} \colon \ j_* \big( \mathcal{E}{\rm nd} \big(\widehat{\boldsymbol{V}}\big) \big)\oplus
\Theta_{\mathcal{C}_{\widehat{M}_{\boldsymbol{\nu}}}/ \widehat{M}_{\boldsymbol{\nu}}} \big({-}D\big(\tilde{\boldsymbol{t}}\big)\big)
\xrightarrow{\ d_{\tilde{\psi}} \circ\, \mathrm{pr}_2 \ } \Omega^{\otimes 2}_{\mathcal{C}_{\widehat{M}_{\boldsymbol{\nu}}}/ \widehat{M}_{\boldsymbol{\nu}}} \big(D\big(\tilde{\boldsymbol{t}}\big)\big),
\end{gather*}
where $\mathrm{pr}_2$ is the second projection.
We have the following commutative diagram
\begin{gather*}
\xymatrix{
j_* \big( \mathcal{E}{\rm nd} \big(\widehat{\boldsymbol{V}}\big) \big)\oplus
 \Theta_{\mathcal{C}_{\widehat{M}_{\boldsymbol{\nu}}}/ \widehat{M}_{\boldsymbol{\nu}}} \big({-}D\big(\tilde{\boldsymbol{t}}\big)\big)
 \ar[r]^-{d_{\tilde{\psi}} \circ\, \mathrm{pr}_2} \ar[d] &
\Omega^{\otimes 2}_{\mathcal{C}_{\widehat{M}_{\boldsymbol{\nu}}}/ \widehat{M}_{\boldsymbol{\nu}}} \big(D\big(\tilde{\boldsymbol{t}}\big)\big)\ar[d] \\
\big(\mathcal{F}^0\big)^{{\rm an}} \ar[r]^-{(d_{\mathcal{F}^{\bullet}})^{{\rm an}}} & \big(\mathcal{F}^1\big)^{{\rm an}}.}
\end{gather*}
We can show that the homomorphism $\operatorname{Ker} d_{\tilde{\nabla}^{{\rm an}}}|_{\mathcal{C}_{\widehat{M}_{\boldsymbol{\nu}}}} \rightarrow j_* \big( \mathcal{E}{\rm nd} \big(\widehat{\boldsymbol{V}}\big)\big)$ is an isomorphism and the homomorphism $d_{\tilde{\nabla}^{{\rm an}}} \colon \big(\widetilde{\mathcal{F}}^0\big)^{{\rm an}} \rightarrow \big(\widetilde{\mathcal{F}}^1\big)^{{\rm an}}$ is surjective as in the proof of \cite[Proposition~7.3]{Inaba}. Then we have the following proposition.
\begin{Proposition}If $\boldsymbol{\nu}$ is generic, then we have
\begin{gather*}
\bold{R}^1(\pi_{\widehat{M}_{\boldsymbol{\nu}}})_*\big((\mathcal{F}^{\bullet})^{{\rm an}}\big)
\xrightarrow{\, \sim \, } \bold{R}^1(\pi_{\widehat{M}_{\boldsymbol{\nu}}})_*\big(\big(\widehat{\mathcal{F}}^{\bullet}\big)^{{\rm an}}\big),
\end{gather*}
where $\pi_{\widehat{M}_{\boldsymbol{\nu}}} \colon \mathcal{C}_{\widehat{M}_{\boldsymbol{\nu}}} \rightarrow \widehat{M}_{\boldsymbol{\nu}}$ is the natural map.
\end{Proposition}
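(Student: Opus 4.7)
The plan is to upgrade the commutative square already exhibited in the excerpt to a quasi-isomorphism between the two complexes of analytic sheaves on $\mathcal{C}_{\widehat{M}_{\boldsymbol{\nu}}}$, and then apply $\mathbf{R}(\pi_{\widehat{M}_{\boldsymbol{\nu}}})_{*}$. First I would use the $\mathcal{O}$-linear splitting $\iota(\tilde{\nabla})$ to trivialize the extension defining $\mathcal{F}^{0}$: the isomorphism $\bigl(\mathrm{Id}-\iota(\tilde{\nabla})\circ\mathrm{symb}_{1},\mathrm{symb}_{1}\bigr)\colon \mathcal{F}^{0}\xrightarrow{\sim}\widetilde{\mathcal{F}}^{0}\oplus\Theta_{\mathcal{C}_{\widehat{M}_{\boldsymbol{\nu}}}/\widehat{M}_{\boldsymbol{\nu}}}\bigl(-D(\tilde{\boldsymbol{t}})\bigr)$ intertwines $d_{\mathcal{F}^{\bullet}}$ with $(d_{\tilde{\nabla}},d_{\tilde{\psi}})$ by the very definition of $d_{\mathcal{F}^{\bullet}}$. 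Thus $(\mathcal{F}^{\bullet})^{\mathrm{an}}$ splits as a direct sum of two complexes,
\[
\bigl[(\widetilde{\mathcal{F}}^{0})^{\mathrm{an}}\xrightarrow{d_{\tilde{\nabla}^{\mathrm{an}}}}(\widetilde{\mathcal{F}}^{1})^{\mathrm{an}}\bigr]\ \oplus\ \bigl[\Theta_{\mathcal{C}_{\widehat{M}_{\boldsymbol{\nu}}}/\widehat{M}_{\boldsymbol{\nu}}}\bigl({-}D\bigl(\tilde{\boldsymbol{t}}\bigr)\bigr)\xrightarrow{d_{\tilde{\psi}}}\Omega^{\otimes 2}_{\mathcal{C}_{\widehat{M}_{\boldsymbol{\nu}}}/\widehat{M}_{\boldsymbol{\nu}}}\bigl(D\bigl(\tilde{\boldsymbol{t}}\bigr)\bigr)\bigr],
\]
and $(\widehat{\mathcal{F}}^{\bullet})^{\mathrm{an}}$ splits similarly as the direct sum of $j_{*}\mathcal{E}\mathrm{nd}(\widehat{\boldsymbol{V}})$ placed in degree zero and the same second summand. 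The vertical arrow in the commutative square respects both decompositions and is the identity on the quadratic-differential factor, so the required comparison reduces to showing that
\[
j_{*}\mathcal{E}\mathrm{nd}(\widehat{\boldsymbol{V}})[0]\longrightarrow \bigl[(\widetilde{\mathcal{F}}^{0})^{\mathrm{an}}\xrightarrow{d_{\tilde{\nabla}^{\mathrm{an}}}}(\widetilde{\mathcal{F}}^{1})^{\mathrm{an}}\bigr]
\]
is a quasi-isomorphism of complexes of analytic sheaves.

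Next I would compute the cohomology sheaves of the right-hand complex. The hypothesis that $\boldsymbol{\nu}$ is generic is exactly what feeds the two inputs recorded just before the proposition: (i) the canonical map $\operatorname{Ker}d_{\tilde{\nabla}^{\mathrm{an}}}|_{\mathcal{C}_{\widehat{M}_{\boldsymbol{\nu}}}}\to j_{*}\mathcal{E}\mathrm{nd}(\widehat{\boldsymbol{V}})$ is an isomorphism, and (ii) $d_{\tilde{\nabla}^{\mathrm{an}}}\colon(\widetilde{\mathcal{F}}^{0})^{\mathrm{an}}\to(\widetilde{\mathcal{F}}^{1})^{\mathrm{an}}$ is surjective, both established exactly as in the proof of \cite[Proposition~7.3]{Inaba}. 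Together these say that the cohomology of the right-hand complex is concentrated in degree $0$ and equal to $j_{*}\mathcal{E}\mathrm{nd}(\widehat{\boldsymbol{V}})$, which is exactly the quasi-isomorphism I want.

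Finally, pushing this quasi-isomorphism forward along $\pi_{\widehat{M}_{\boldsymbol{\nu}}}$ yields an isomorphism on hyperdirect images in every degree, and in particular on $\mathbf{R}^{1}$, which is the assertion of the proposition. The only non-formal step is verifying (i) and (ii), and the main obstacle there is the parabolic structure built into $\widetilde{\mathcal{F}}^{0}$ and $\widetilde{\mathcal{F}}^{1}$ at each marked point $\tilde{t}_{i}$: one must check that horizontal endomorphisms of the local system automatically preserve the parabolic filtration, and that surjectivity of $d_{\tilde{\nabla}^{\mathrm{an}}}$ persists after imposing the residue-nilpotency condition defining $\widetilde{\mathcal{F}}^{1}$. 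Both are handled by the genericity of $\boldsymbol{\nu}$, which forces the eigenvalue differences $\nu^{(i)}_{j}-\nu^{(i)}_{k}$ to be non-integral, diagonalizes the local picture at each $\tilde{t}_{i}$, and lets Inaba's local computation go through verbatim.
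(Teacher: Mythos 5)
Your proposal is correct and is essentially the paper's own argument: the paper also reduces, via the splitting of $\mathcal{F}^0$ by $\iota\big(\tilde{\nabla}\big)$ built into the definition of $d_{\mathcal{F}^{\bullet}}$ and the displayed commutative square, to the quasi-isomorphism $j_*\big(\mathcal{E}{\rm nd}\big(\widehat{\boldsymbol{V}}\big)\big)\rightarrow \big[\big(\widetilde{\mathcal{F}}^0\big)^{\rm an}\xrightarrow{d_{\tilde{\nabla}^{\rm an}}}\big(\widetilde{\mathcal{F}}^1\big)^{\rm an}\big]$, which it justifies by exactly your inputs (i) $\operatorname{Ker} d_{\tilde{\nabla}^{\rm an}}\cong j_*\big(\mathcal{E}{\rm nd}\big(\widehat{\boldsymbol{V}}\big)\big)$ and (ii) surjectivity of $d_{\tilde{\nabla}^{\rm an}}$, both taken from the proof of \cite[Proposition~7.3]{Inaba} under the genericity of $\boldsymbol{\nu}$, before passing to hyperdirect images. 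No gap.
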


\subsection{Isomonodromic deformations}\label{SS Isomonod defor}
Let $\boldsymbol{\nu}$ be an element of $N^{(n)}_r(e)$. Put $M_{\boldsymbol{\nu}}=M_{\mathcal{C}/\tilde{M}_{g,n}}^{\boldsymbol{\alpha}}\big(\tilde{\boldsymbol{t}},r,e\big)_{\boldsymbol{\nu}}$ which is the fiber of $\boldsymbol{\nu}$ under $M_{\mathcal{C}/\tilde{M}_{g,n}}^{\boldsymbol{\alpha}}\big(\tilde{\boldsymbol{t}},r,e\big) \rightarrow N^{(n)}_r(e)$. Let $j \colon \mathcal{C}_{M_{\boldsymbol{\nu}}} \setminus \big\{\tilde{t}_1 ,\ldots, \tilde{t}_n\big\}_{M_{\boldsymbol{\nu}}}\rightarrow \mathcal{C}_{M_{\boldsymbol{\nu}}}$ be the canonical inclusion. Let $\operatorname{Ker} \tilde{\nabla}^{{\rm an}}|_{\mathcal{C}_{M_{\boldsymbol{\nu}}} \setminus \{\tilde{t}_1 ,\ldots, \tilde{t}_n\}_{M_{\boldsymbol{\nu}}} }$ be the locally constant sheaf of the locally free $(\pi_{M_{\boldsymbol{\nu}}}\circ j)^{-1}\mathcal{O}_{M_{\boldsymbol{\nu}}}$-module associated to the relative analytic connection $\tilde{\nabla}^{{\rm an}}$ on $\mathcal{C}_{M_{\boldsymbol{\nu}}}\setminus \big\{\tilde{t}_1 ,\ldots, \tilde{t}_n\big\}_{M_{\boldsymbol{\nu}}}$, where \smash{$\pi_{M_{\boldsymbol{\nu}}} \colon \mathcal{C}_{M_{\boldsymbol{\nu}}} \rightarrow M_{\boldsymbol{\nu}}$} is the natural map.

\begin{Definition}For $\pi_{\boldsymbol{\nu}} \colon M_{\boldsymbol{\nu}}\rightarrow \tilde{M}_{g,n}$, we say a complex foliation $\mathcal{F}$ is a \textit{foliation determined by the isomonodromic deformations} if
\begin{itemize}\itemsep=0pt
\item[(1)] $\mathcal{F}$ is transverse to each fiber $(M_{\boldsymbol{\nu}})_t = \pi_{\boldsymbol{\nu}}^{-1} (t)$, $t \in \tilde{M}_{g,n}$, and
\item[(2)] for each leaf $l$ on $M_{\boldsymbol{\nu}}$, the restriction of the local system $j_*\big(\operatorname{Ker} \tilde{\nabla}^{{\rm an}}|_{\mathcal{C}_{M_{\boldsymbol{\nu}}} \setminus \big\{\tilde{t}_1 ,\ldots, \tilde{t}_n\big\} }\big)|_{\mathcal{C}\times_{ \tilde{M}_{g,n}}l}$ is constant.
\end{itemize}
\end{Definition}

Let $\mu \colon \pi_{\boldsymbol{\nu}}^* \Theta_{\tilde{M}_{g,n}} \rightarrow R^1 ( \pi_{M_{\boldsymbol{\nu}}})_*\big(\Theta_{\mathcal{C}_{M_{\boldsymbol{\nu}}} / M_{\boldsymbol{\nu}}} \big({-}D\big(\tilde{\boldsymbol{t}}\big)\big) \big)$ be the Kodaira--Spencer map, where $\pi_{M_{\boldsymbol{\nu}}} \colon$ $\mathcal{C}_{M_{\boldsymbol{\nu}}} \rightarrow M_{\boldsymbol{\nu}}$ is the natural morphism.
We define a splitting $\mathfrak{D}$ of the tangent map $\Theta_{M_{\boldsymbol{\nu}}} \rightarrow \pi_{\boldsymbol{\nu}}^* \big(\Theta_{\tilde{M}_{g,n}}\big)$ as follows
\begin{gather*}
\mathfrak{D} \colon \ \pi_{\boldsymbol{\nu}}^* \big(\Theta_{\tilde{M}_{g,n}}\big) \longrightarrow \Theta_{M_{\boldsymbol{\nu}}} \cong
 \bold{R}^1(\pi_{M_{\boldsymbol{\nu}}})_* \big(\mathcal{F}_{M_{\boldsymbol{\nu}}}^{0} \rightarrow \widetilde{\mathcal{F}}_{M_{\boldsymbol{\nu}}}^1 \big),
\qquad v \longmapsto \big[\big\{ \iota\big(\tilde{\nabla}\big)(d_{\alpha\beta}) \big\}, \{ 0\}\big],
\end{gather*}
where $[\{ d_{\alpha\beta} \}]$ is a description of $\mu(v)$ by the \v{C}ech cohomology. Here, we take an affine open covering $\{ U_{\alpha} \}$.

\begin{Proposition}[{\cite[Section~6]{Hurt}, \cite[Section~8]{Inaba}, \cite[Section~3.3]{Komyo}}] The subsheaf $\mathfrak{D} \big(\pi^* \big(\Theta_{\tilde{M}_{g,n}}\big)\big)$ determines the foliation determined by the isomonodromic deformations.
\end{Proposition}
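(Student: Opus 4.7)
The proof strategy is to verify three things in sequence: (1) that $\mathfrak{D}$ is a well-defined $\mathcal{O}$-linear splitting of the tangent map $\Theta_{M_{\boldsymbol{\nu}}} \to \pi^*_{\boldsymbol{\nu}}\Theta_{\tilde{M}_{g,n}}$; (2) that its image consists of infinitesimal isomonodromic deformations, in the sense that along the integral curves the local system $j_*(\operatorname{Ker}\tilde{\nabla}^{{\rm an}})$ is infinitesimally constant; and (3) that the resulting distribution is involutive, so that it actually defines a foliation.

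For step (1), take a local vector field $v$ on $\tilde{M}_{g,n}$ whose Kodaira--Spencer image $\mu(v)$ is represented by a \v{C}ech $1$-cocycle $\{d_{\alpha\beta}\}$ in $\Theta_{\mathcal{C}_{M_{\boldsymbol{\nu}}}/M_{\boldsymbol{\nu}}}({-}D(\tilde{\boldsymbol{t}}))$ on an affine open cover $\{U_\alpha\}$ of $\mathcal{C}_{M_{\boldsymbol{\nu}}}$. Since the splitting $\iota(\tilde{\nabla})$ from Section~\ref{SS AA} is $\mathcal{O}$-linear, $\{\iota(\tilde{\nabla})(d_{\alpha\beta})\}$ is automatically a \v{C}ech $1$-cocycle in $\mathcal{F}^0_{M_{\boldsymbol{\nu}}}$; moreover, $\mathrm{symb}_1 \circ \iota(\tilde{\nabla}) = \mathrm{id}$ forces $(\mathrm{Id} - \iota(\tilde{\nabla}) \circ \mathrm{symb}_1)(\iota(\tilde{\nabla})(d_{\alpha\beta})) = 0$, so the differential into $\widetilde{\mathcal{F}}^1_{M_{\boldsymbol{\nu}}}$ vanishes and $[\{\iota(\tilde{\nabla})(d_{\alpha\beta})\}, \{0\}]$ represents a genuine hypercohomology class. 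Independence from the choice of representative and cover follows by the same $\mathcal{O}$-linearity argument, and applying $\mathrm{symb}_1$ to the first component recovers $\mu(v)$, proving the splitting property.

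For step (2), interpret $\mathfrak{D}(v)$ through the isomorphism $\tilde{\varsigma}$ of Proposition~\ref{Tangent sheaf Cech 1} as a concrete first-order deformation of $(C, \boldsymbol{t}, E, \nabla, \{l^{(i)}_j\})$ in which the curve is deformed via the automorphisms $\mu_{\alpha\beta}(\epsilon)$ associated to $d_{\alpha\beta}$ and the bundle transitions are twisted by the horizontal lifts $\iota(\tilde{\nabla})(d_{\alpha\beta})$. The key local computation is that in a trivialization in which $\tilde{\nabla}$ has connection matrix $A_\alpha f_\alpha^{-1} df_\alpha$, the formula $\iota(\tilde{\nabla})(g\,\partial/\partial f_\alpha) = g(\partial/\partial f_\alpha + A_\alpha f_\alpha^{-1})$ shows that this operator annihilates any local section satisfying $\tilde{\nabla} s = 0$. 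Consequently, flat sections on $U_\beta^\epsilon$ glue to flat sections on $U_\alpha^\epsilon$ under the deformed transitions, so the local system $j_*(\operatorname{Ker}\tilde{\nabla}^{{\rm an}})$ is infinitesimally constant along $\mathfrak{D}(v)$; compatibility with the parabolic filtrations is built into the defining condition of $\mathcal{F}^0$.

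The main obstacle is step (3): verifying that $\mathfrak{D}(\pi^*_{\boldsymbol{\nu}}\Theta_{\tilde{M}_{g,n}})$ is involutive. A direct \v{C}ech computation of the bracket of $\iota(\tilde{\nabla})(d_{\alpha\beta})$ with $\iota(\tilde{\nabla})(d'_{\alpha\beta})$ would work but produces a curvature-type correction that one must recognize as a coboundary modulo gauge transformations; this is combinatorially delicate. I would instead argue indirectly via the Riemann--Hilbert correspondence: by step (2), integral curves of $\mathfrak{D}(v)$ preserve the conjugacy class of the monodromy representation, so they lie in the fibers of the monodromy map from $M_{\boldsymbol{\nu}}$ to the character variety of $\pi_1(C\setminus\{t_i\})$ in $\mathrm{GL}_r(\mathbb{C})$, which are smooth of the correct dimension. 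Hence $\mathfrak{D}(\pi^*_{\boldsymbol{\nu}}\Theta_{\tilde{M}_{g,n}})$ coincides with the tangent bundle of these fibers and is automatically integrable, identifying the distribution with the isomonodromic foliation.
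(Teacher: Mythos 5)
The paper does not prove this proposition: it is stated as a quotation of the cited references (Hurtubise, Section~6; Inaba, Section~8; Komyo, Section~3.3), so your sketch can only be compared with the arguments there, which it essentially follows. Your steps (1) and (2) are correct and are the standard points: $\mathcal{O}$-linearity of $\iota\big(\tilde{\nabla}\big)$ together with $\mathrm{symb}_1\circ\iota\big(\tilde{\nabla}\big)=\mathrm{id}$ makes $\big[\big\{\iota\big(\tilde{\nabla}\big)(d_{\alpha\beta})\big\},\{0\}\big]$ a well-defined class in ${\mathbf H}^1\big(\mathcal{F}^0_{M_{\boldsymbol{\nu}}}\rightarrow\widetilde{\mathcal{F}}^1_{M_{\boldsymbol{\nu}}}\big)$ splitting the tangent map, and since $\iota\big(\tilde{\nabla}\big)\big(g\,\partial/\partial f_{\alpha}\big)$ acts on local sections as $g\tilde{\nabla}_{\partial/\partial f_{\alpha}}$, it annihilates flat sections, so the derivative of the (locally defined) monodromy map vanishes on $\mathfrak{D}(v)$.

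The genuine gap is in your step (3). Identifying $\mathfrak{D}\big(\pi^*\Theta_{\tilde{M}_{g,n}}\big)$ with the tangent spaces of the fibres of the monodromy map requires those fibres to be smooth of dimension $3g-3+n$. This holds when $\boldsymbol{\nu}$ is generic: then every $(\boldsymbol{t},\boldsymbol{\nu})$-parabolic connection is irreducible, the Riemann--Hilbert map is an analytic isomorphism on each fibre of $\pi_{\boldsymbol{\nu}}$ over a simply connected open set of $\tilde{M}_{g,n}$, and each fibre of the monodromy map is the graph of $t\mapsto \mathrm{RH}_t^{-1}(\rho)$. But Section~\ref{SS Isomonod defor} does not assume $\boldsymbol{\nu}$ generic, and for special $\boldsymbol{\nu}$ the Riemann--Hilbert morphism can contract loci (reducible connections mapping to singular points of the character variety), so its fibres need not be smooth of the expected dimension and the dimension-count argument breaks down exactly there. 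To prove the statement as written you must either carry out the involutivity verification you set aside, or construct the leaves directly as in the cited references (extending the flat family with fixed monodromy over a polydisc, which works for every $\boldsymbol{\nu}$), or add the genericity hypothesis --- which is in fact all this paper uses, since the proof of Proposition~\ref{Prop closed form} reduces to generic $\boldsymbol{\nu}$ and Section~\ref{Section Twisted Cotangent Bundle} assumes it throughout. A minor further point: condition (2) of the definition of the foliation asks for constancy of the local system on $\mathcal{C}\times_{\tilde{M}_{g,n}}l$, not merely pointwise constancy of the conjugacy class of the monodromy; over a leaf realized as a section over a simply connected base this does follow, but it deserves an explicit sentence rather than being absorbed into ``preserve the conjugacy class''.
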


We can take a natural lift $\widehat{\mathfrak{D}} \colon \hat{\pi}_{\boldsymbol{\nu}}^* \big(\Theta_{T^* \tilde{M}_{g,n}}\big) \rightarrow \Theta_{\widehat{M}_{\boldsymbol{\nu}}}$ of $\mathfrak{D} \colon \pi_{\boldsymbol{\nu}}^* \big(\Theta_{\tilde{M}_{g,n}}\big) \rightarrow \Theta_{M_{\boldsymbol{\nu}}}$ as follows. We define a complex $\mathcal{G}^{\bullet}$ by
\begin{gather}\label{complexG}
 \Theta_{\mathcal{C}_{\widehat{M}_{\boldsymbol{\nu}}}/\widehat{M}_{\boldsymbol{\nu}}}\big({-}D\big(\tilde{\boldsymbol{t}}\big)\big) =:\mathcal{G}^0
 \xrightarrow{\ d_{\tilde{\psi}}\ }
\mathcal{G}^1 :=\Omega^{\otimes 2}_{\mathcal{C}_{\widehat{M}_{\boldsymbol{\nu}}}/\widehat{M}_{\boldsymbol{\nu}}}\big(D\big(\tilde{\boldsymbol{t}}\big)\big),
\end{gather}
where $d_{\tilde{\psi}}$ is defined by (\ref{dpsi Ui}). Then we can show that $\hat{\pi}_{\boldsymbol{\nu}}^* \Theta_{T^* \tilde{M}_{g,n}} \cong \bold{R}^1 (\pi_{\widehat{M}_{\boldsymbol{\nu}}})_* (\mathcal{G}^{\bullet})$. We define a~lift $\widehat{\mathfrak{D}} \colon \hat{\pi}_{\boldsymbol{\nu}}^* \Theta_{T^*\tilde{M}_{g,n}}\rightarrow \Theta_{\widehat{M}_{\boldsymbol{\nu}}} $ of $\mathfrak{D}$ by the following homomorphism
\begin{gather}\label{def hatD}
\widehat{\mathfrak{D}} \colon \ {\mathbf H}^1\big(\mathcal{G}^{\bullet}_{\widehat{U}}\big) \longrightarrow {\mathbf H}^1 \big(\mathcal{F}_{\widehat{U}}^{\bullet}\big), \qquad
[\{ d_{\alpha\beta} \} ,\{ w_{\alpha}\} ]\longmapsto \big[\big\{ \iota \big(\tilde{\nabla}\big)(d_{\alpha\beta} ) \big\},\{(0, w_{\alpha}) \}\big].
\end{gather}

\subsection{Symplectic structure}\label{SS Sympl and Hamilt}

First, we recall the canonical symplectic structure $\omega_{\tilde{M}_{g,n}}$ on $T^*\tilde{M}_{g,n}$.
Let $U$ be an affine open set of $T^*\tilde{M}_{g,n}$
and let $(\mathcal{C}_U,\tilde{\psi})$ be a family of curves and quadratic differentials on $U$.
Let $\psi_{\alpha} df_{\alpha}^{\otimes 2}$ be the restriction of $\tilde{\psi}$
on an affine open set $U_{\alpha} \subset \mathcal{C}_U$.
Let $\mu_{\alpha \beta}$ be the isomorphism (\ref{muAB}): $f_\alpha= \mu_{\alpha \beta}(f_{\beta})$.
We define a 1-form $\theta_{\tilde{M}_{g,n}}$ on $T^*\tilde{M}_{g,n}$ by
\begin{gather*}
\theta_{\tilde{M}_{g,n}} \colon \ {\mathbf H}^1\big(\mathcal{G}^{\bullet}_{U}\big) \longrightarrow H^1 \big( \Omega^1_{\mathcal{C}_{U}/U} \big) , \qquad
[ \{ d_{\alpha\beta} \} ,\{ w_{\alpha}\} ] \longmapsto \left[ \left\{ d_{\beta\alpha } \psi_{\alpha} \frac{\partial \mu_{\alpha\beta}}{\partial f_{\beta}} df_{\alpha} \right\} \right],
\end{gather*}
where $\mathcal{G}_U$ is the complex $d_{\tilde{\psi}} \colon\Theta_{\mathcal{C}_U /U} \big({-}D\big(\tilde{\boldsymbol{t}}\big)\big) \rightarrow \Omega^{\otimes 2}_{\mathcal{C}_U /U}\big(D\big(\tilde{\boldsymbol{t}}\big)\big) $. The 1-form $\theta_{\tilde{M}_{g,n}}$ is the canonical 1-form on the cotangent bundle $T^*\tilde{M}_{g,n}$. Let $d \theta_{\tilde{M}_{g,n}}$ be the exterior differential of $\theta_{\tilde{M}_{g,n}}$. The 2-form $d \theta_{\tilde{M}_{g,n}}$ gives the symplectic form on the cotangent bundle $T^*\tilde{M}_{g,n}$.

\begin{Proposition} Let $v=[ (\{d_{\alpha\beta}\}, \{w_\alpha\}) ] $ and $v'=[ (\{d'_{\alpha\beta}\}, \{w'_\alpha\}) ]$ be elements of ${\mathbf H}^1\big(\mathcal{G}_U^{\bullet}\big)$. The pairing
\begin{gather*}
{\mathbf H}^1\big(\mathcal{G}_U^{\bullet}\big) \otimes {\mathbf H}^1\big(\mathcal{G}_U^{\bullet}\big) \longrightarrow {\mathbf H}^2 \big(\Omega^{\bullet}_{\mathcal{C}_U/U}\big),\\
v\otimes w \longmapsto [ \{ 2 \cdot d_{\beta\alpha} \circ d'_{\beta\gamma} \circ \psi_{\beta} \}, \{ - d_{\beta\alpha} \circ w_{\beta}'- w_{\alpha} \circ d'_{\alpha\beta} \} ]
\end{gather*}
coincides with the symplectic form $d \theta_{\tilde{M}_{g,n}}$.
\end{Proposition}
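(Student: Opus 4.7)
The proposition asserts that an explicit \v{C}ech-level bilinear form on $\mathbf{H}^1(\mathcal{G}_U^\bullet)$ coincides with the exterior derivative of the canonical $1$-form $\theta_{\tilde{M}_{g,n}}$ on $T^*\tilde{M}_{g,n}$. My plan is to (a) first reinterpret the stated \v{C}ech formula for $\theta_{\tilde{M}_{g,n}}$ as the standard Serre-duality pairing between the Kodaira--Spencer class of the curve and the quadratic differential $\tilde{\psi}$, and (b) then compute its exterior derivative directly on \v{C}ech cocycles via a two-parameter infinitesimal deformation combined with the Cartan formula
\[d\theta(v,v') \;=\; v\bigl(\theta(v')\bigr) \;-\; v'\bigl(\theta(v)\bigr) \;-\; \theta([v,v']).\]

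For step (a), observe that the projection of $v = [(\{d_{\alpha\beta}\},\{w_\alpha\})] \in \mathbf{H}^1(\mathcal{G}_U^\bullet)$ to $H^1(\Theta_{\mathcal{C}_U/U}(-D(\tilde{\boldsymbol{t}})))$ is $[\{d_{\alpha\beta}\}]$, and the given expression $\{d_{\beta\alpha}\psi_\alpha(\partial\mu_{\alpha\beta}/\partial f_\beta)\,df_\alpha\}$ is a \v{C}ech model of its Serre pairing with $\tilde{\psi} \in H^0(\Omega^{\otimes 2}(D(\tilde{\boldsymbol{t}})))$: the contraction of $d_{\beta\alpha}$ against $\psi_\alpha\,df_\alpha^{\otimes 2}$ in the $\alpha$-chart, with the Jacobian $\partial\mu_{\alpha\beta}/\partial f_\beta$ compensating the change from the $\beta$- to the $\alpha$-trivialisation. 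Under the relative-trace identification $H^1(\Omega^1_{\mathcal{C}_U/U})\cong \mathcal{O}_U$ this matches the intrinsic description $\theta_x(v) = \langle\psi,\pi_* v\rangle$ of the Liouville $1$-form on a cotangent bundle.

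For step (b), extend $v$ and $v'$ to a two-parameter infinitesimal deformation over $\operatorname{Spec}\mathbb{C}[\epsilon_1,\epsilon_2]/(\epsilon_1^2,\epsilon_2^2)$ whose Kodaira--Spencer classes in the $\epsilon_1$- and $\epsilon_2$-directions are $v$ and $v'$. The deformation data then read $\mu_{\alpha\beta}^*f_\beta = f_\beta + \epsilon_1 d_{\alpha\beta}f_\beta + \epsilon_2 d'_{\alpha\beta}f_\beta + \cdots$ and $\psi_\alpha + \epsilon_1 w_\alpha + \epsilon_2 w'_\alpha + \cdots$. Differentiating the \v{C}ech representative of $\theta(v')$ in the $\epsilon_1$-direction by the Leibniz rule produces three types of terms: derivatives hitting $\psi_\alpha$ yield $w_\alpha d'_{\alpha\beta}$-type contributions landing in the $\Omega^1$-row of the total \v{C}ech--de Rham complex; derivatives hitting $\partial\mu_{\alpha\beta}/\partial f_\beta$ and $df_\alpha$ yield quadratic $d\cdot d'\cdot\psi$ contributions in the $\mathcal{O}$-row; and derivatives hitting the \v{C}ech cochain $d'_{\beta\alpha}$ itself cancel against $\theta([v,v'])$. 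Antisymmetrising in $v\leftrightarrow v'$ assembles these into the pair $[\{2 d_{\beta\alpha}d'_{\beta\gamma}\psi_\beta\},\{-d_{\beta\alpha}w'_\beta - w_\alpha d'_{\alpha\beta}\}]$, which is the claimed representative of $d\theta_{\tilde{M}_{g,n}}$ in $\mathbf{H}^2(\Omega^\bullet_{\mathcal{C}_U/U})$.

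The hard part is the \v{C}ech index bookkeeping: tracking signs from the alternating convention on $1$-cochains, applying the cocycle relations $d_{\alpha\beta}+d_{\beta\gamma}-d_{\alpha\gamma}=0$ and $d_{\tilde{\psi}}(d_{\alpha\beta}) = w_\beta - w_\alpha$ (which express the closedness of $(\{d_{\alpha\beta}\},\{w_\alpha\})$ in $\mathcal{G}^\bullet$) to absorb coboundaries, and verifying that the resulting $2$-cochain is closed in the total complex. As independent consistency checks, one can specialise to local Darboux coordinates $(t_a,h_a)$ on $T^*\tilde{M}_{g,n}$ and recover $\sum_a dh_a\wedge dt_a$, and one can verify invariance of the pairing under cochain-level coboundaries to confirm that it descends to a well-defined form on $\mathbf{H}^1(\mathcal{G}_U^\bullet)\otimes \mathbf{H}^1(\mathcal{G}_U^\bullet)$.
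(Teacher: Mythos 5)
Your proposal follows essentially the same route as the paper's proof: apply Cartan's formula to the \v{C}ech representative of $\theta_{\tilde{M}_{g,n}}$, use the Leibniz rule to sort the resulting terms (derivatives hitting $\psi_{\alpha}$, the Jacobian, and the cochain itself, the last cancelling against the bracket term), and then absorb a coboundary while invoking the cocycle relation $d_{\tilde{\psi}}(d'_{\alpha\beta})=w'_{\beta}-w'_{\alpha}$ to land on the stated representative under $H^1\big(\Omega^1_{\mathcal{C}_U/U}\big)\cong \mathbf{H}^2\big(\Omega^{\bullet}_{\mathcal{C}_U/U}\big)$. The ``index bookkeeping'' you defer is precisely what the paper carries out, namely adding the exterior differential of $d'_{\alpha\beta}d_{\alpha\beta}\psi_{\alpha}$ and simplifying with that cocycle relation, so your outline is correct and matches the paper's argument.
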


\begin{proof}Let $D_{v} \colon \mathcal{O}_{U_{\alpha\beta}} \rightarrow \mathcal{O}_{U_{\alpha\beta}}$ be a derivation corresponding to $v$. We compute the 2-form $d \theta_{\tilde{M}_{g,n}} (v, v')$ as follows
\begin{gather*}
D_{v} \theta_{\tilde{M}_{g,n}}(v') -D_{v'} \theta_{\tilde{M}_{g,n}}(v) + \theta_{\tilde{M}_{g,n}}([v,v']) \\
\qquad{} = D_{v'} (\mu_{\beta \alpha}) D_v \left( \psi_{\alpha} \frac{\partial \mu_{\alpha\beta}}{\partial f_{\beta}} \right) df_{\alpha}- D_{v} (\mu_{\beta \alpha}) D_{v'}
\left( \psi_{\alpha} \frac{\partial \mu_{\alpha\beta}}{\partial f_{\beta}} \right) df_{\alpha} \\
\qquad{} = D_{v'} (\mu_{\beta \alpha}) \psi_{\alpha} \frac{\partial D_v(\mu_{\alpha \beta})}{\partial f_{\beta}} df_{\alpha}- D_{v} (\mu_{ \beta \alpha}) \psi_{\alpha}
\frac{\partial D_{v'}(\mu_{\alpha\beta})}{\partial f_{\beta}} df_\alpha \\
\qquad\quad {} + D_{v'} (\mu_{\beta \alpha}) \frac{\partial \mu_{\alpha\beta}}{\partial f_{\beta}} D_v( \psi_{\alpha} df_\alpha )- D_{v} (\mu_{\beta \alpha}) \frac{\partial \mu_{\alpha\beta}}{\partial f_{\beta}}
D_{v'}( \psi_{\alpha} df_\alpha ) \\
\qquad{} = - d'_{\alpha\beta} \psi_\alpha \frac{\partial d_{\alpha\beta}}{\partial f_{\beta}} df_{\beta}
+ d_{\alpha\beta} \psi_\alpha \frac{\partial d'_{\alpha\beta}}{\partial f_{\beta}} df_{\beta}
- d'_{\alpha\beta} w_\alpha \frac{\partial \mu_{\alpha\beta}}{\partial f_{\beta}} df_{\beta}
+ d_{\alpha\beta} w'_\alpha \frac{\partial \mu_{\alpha\beta}}{\partial f_{\beta}} df_{\beta}.
\end{gather*}
We add the exterior differential of $d'_{\alpha \beta} d_{\alpha \beta} \psi_\alpha$ to the formula above
\begin{gather*}
 - d'_{\alpha \beta} \psi_\alpha \frac{\partial d_{\alpha \beta}}{\partial f_\beta} df_\beta+ d_{\alpha \beta} \psi_\alpha \frac{\partial d'_{\alpha\beta}}{\partial f_\beta} df_\beta
- d'_{\alpha \beta} w_\alpha \frac{\partial \mu_{\alpha \beta}}{\partial f_\beta} df_\beta+ d_{\alpha \beta} w_\alpha \frac{\partial \mu_{\alpha \beta}}{\partial f_\beta} df_\beta
+ d\left( d'_{\alpha \beta} d_{\alpha \beta} \psi_\alpha \right)\\
\qquad{} = d_{\alpha \beta} \left( d'_{\alpha \beta} d \psi_\alpha+ 2 \psi_\alpha \frac{\partial d'_{\alpha \beta}}{\partial f_\beta} df_\beta \right)- d'_{\alpha \beta} w_\alpha df_\alpha + d_{\alpha \beta} w'_\alpha df_\alpha \\
\qquad{} = - d_{\beta\alpha} w'_\beta df_\beta - d'_{\alpha \beta} w_\alpha df_\alpha.
\end{gather*}
By the isomorphism $H^1 \big( \Omega^1_{\mathcal{C}_{U}/U} \big) \cong {\mathbf H}^2 \big( \Omega^{\bullet}_{\mathcal{C}_{U}/U} \big)$, we have this proposition.
\end{proof}

\begin{Proposition}\label{Prop symplectic}
Take a point $\boldsymbol{\nu} \in N^{(n)}_r(e)$.
Let $\widehat{M}_{\mathcal{C}/\tilde{M}_{g,n}}^{\boldsymbol{\alpha}}(\tilde{\boldsymbol{
t}},r,e)_{\boldsymbol{\nu}}$
be the fiber of $\boldsymbol{\nu}$ under the composition
$\widehat{M}_{\mathcal{C}/\tilde{M}_{g,n}}^{\boldsymbol{\alpha}}\big(\tilde{\boldsymbol{t}},r,e\big)_{\boldsymbol{\nu}} \rightarrow N^{(n)}_r(e)$.
Then the fiber $\widehat{M}_{\mathcal{C}/\tilde{M}_{g,n}}^{\boldsymbol{\alpha}}\big(\tilde{\boldsymbol{t}},r,e\big)_{\boldsymbol{\nu}}$
has an algebraic symplectic structure.
\end{Proposition}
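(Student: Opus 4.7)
The plan is to construct the algebraic symplectic form on $\widehat{M}_{\boldsymbol{\nu}} := \widehat{M}_{\mathcal{C}/\tilde{M}_{g,n}}^{\boldsymbol{\alpha}}(\tilde{\boldsymbol{t}},r,e)_{\boldsymbol{\nu}}$ directly at the \v{C}ech level on $\bold{R}^1(\pi_{\widehat{M}_{\boldsymbol{\nu}}})_*(\mathcal{F}^{\bullet})$, using the description of the tangent sheaf in Proposition~\ref{Tangent sheaf Cech 1}. The form will be a combination of two pairings already present in the paper: the trace pairing on the $\mathcal{E}\mathrm{nd}(\tilde{E})$-valued pieces of $\mathcal{F}^{\bullet}$, which produces the Inaba symplectic form on $M_{\boldsymbol{\nu}}$, together with the pairing on the $\Theta_{\mathcal{C}/\widehat{M}}({-}D(\tilde{\boldsymbol{t}})) \to \Omega^{\otimes 2}_{\mathcal{C}/\widehat{M}}(D(\tilde{\boldsymbol{t}}))$ piece computed in the proposition immediately preceding, which produces the canonical symplectic form on $T^{*}\tilde{M}_{g,n}$. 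The two pieces are coupled through the splitting $\iota(\tilde{\nabla})$ of $\mathrm{symb}_1$ used in the definition of $\mathcal{F}^0$.

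Concretely, I would define a pairing of complexes $\mathcal{F}^{\bullet}\otimes \mathcal{F}^{\bullet}\rightarrow \Omega_{\mathcal{C}_{\widehat{M}_{\boldsymbol{\nu}}}/\widehat{M}_{\boldsymbol{\nu}}}^{\bullet}[1]$ whose value on two classes $v=[\{u_{\alpha\beta}\}, \{(v_\alpha,w_\alpha)\}]$ and $v'=[\{u'_{\alpha\beta}\}, \{(v'_\alpha,w'_\alpha)\}]$ contains, on the one hand, the Inaba terms $\mathrm{tr}(u_{\alpha\beta}\circ u'_{\beta\gamma})$ and $\mathrm{tr}(u_{\alpha\beta}\circ v'_\beta)-\mathrm{tr}(v_\alpha\circ u'_{\alpha\beta})$, and, on the other hand, the quadratic-differential terms $2\,d_{\beta\alpha}d'_{\beta\gamma}\psi_\beta$ and $-d_{\beta\alpha} w'_\beta-w_\alpha d'_{\alpha\beta}$ from the preceding proposition, where $d_{\alpha\beta}=\mathrm{symb}_1(u_{\alpha\beta})$. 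Passing to hypercohomology and composing with the Serre-duality trace $\bold{R}^2(\pi_{\widehat{M}_{\boldsymbol{\nu}}})_*(\Omega_{\mathcal{C}/\widehat{M}}^{\bullet})\cong \mathcal{O}_{\widehat{M}_{\boldsymbol{\nu}}}$ produces the candidate $2$-form $\omega_{\widehat{M}_{\boldsymbol{\nu}}}$.

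Antisymmetry and independence of the \v{C}ech representatives are short cocycle checks using $d_{\mathcal{F}^{\bullet}}(u)=(v,w)-(v',w')$. Closedness decomposes into three pieces: on fibers of $\hat{\pi}\colon\widehat{M}_{\boldsymbol{\nu}}\rightarrow T^{*}\tilde{M}_{g,n}$ the form restricts to the Inaba symplectic form (closed by \cite[Section~7]{Inaba}); on fibers of $\widehat{M}_{\boldsymbol{\nu}}\rightarrow M_{\boldsymbol{\nu}}$ it restricts to $d\theta_{\tilde{M}_{g,n}}$; the cross-term is closed by the same computation as in the proof of the preceding proposition, now with $\mathrm{symb}_1$ of the connection cocycles playing the role of $d_{\alpha\beta}$. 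For non-degeneracy, I would use the horizontal lift $\widehat{\mathfrak{D}}$ of~(\ref{def hatD}) to decompose $\Theta_{\widehat{M}_{\boldsymbol{\nu}}}$ into the fiberwise tangent spaces of $\hat{\pi}$ and their isomonodromic horizontal complement; in a basis adapted to this decomposition the matrix of $\omega_{\widehat{M}_{\boldsymbol{\nu}}}$ is block upper-triangular, with the two diagonal blocks given by the two non-degenerate restrictions above.

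The main technical obstacle will be fixing the coefficients and signs in the pairing so that closedness of the cross-term actually holds on the nose. In particular, the factor $2$ in~(\ref{dpsi Ui}) in front of $\psi_{U_\alpha}\partial a/\partial f_\alpha$ is precisely what will be needed to match the symmetric contribution coming from the endomorphism-valued cocycles once $d_{\mathcal{F}^{\bullet}}$ is written out through $(\mathrm{Id}-\iota(\tilde{\nabla})\circ\mathrm{symb}_1,\mathrm{symb}_1)$, and balancing these combinatorial factors is the part that will require the most care.
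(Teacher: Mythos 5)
Your construction of the form itself coincides with the paper's: the paper defines $\omega=\omega_1+\omega_2$ at the \v{C}ech level, where $\omega_1$ is Inaba's trace pairing applied to the endomorphism parts $\eta(u_{\alpha\beta})=u_{\alpha\beta}-\iota\big(\tilde{\nabla}\big)\circ\mathrm{symb}_1(u_{\alpha\beta})$ (note that you must apply $\eta$ before taking traces -- the $u_{\alpha\beta}$ themselves are first-order operators, so $\operatorname{Tr}(u_{\alpha\beta}\circ u'_{\beta\gamma})$ as written is not defined) and $\omega_2$ is the quadratic-differential pairing of the preceding proposition applied to $d_{\alpha\beta}=\mathrm{symb}_1(u_{\alpha\beta})$ and the $w_{\alpha}$. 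Skew-symmetry and non-degeneracy go through as you say; the paper cites Inaba's argument, while your block decomposition via $\widehat{\mathfrak{D}}$ amounts to the same thing (in fact the form is block diagonal: $\omega_1$ vanishes when one argument is of type IMD and $\omega_2$ vanishes when one argument is tangent to the fibers of $\hat{\pi}$).

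The gap is in closedness. Closedness of the restrictions of $\omega$ to the fibers of $\hat{\pi}$ and to the fibers of $\widehat{M}_{\boldsymbol{\nu}}\rightarrow M_{\boldsymbol{\nu}}$ does not control $d\omega$: those two families of fibers do not even span $\Theta_{\widehat{M}_{\boldsymbol{\nu}}}$ (the curve-deformation directions are missing), and in any case $d\omega(u,v,w)$ contracted with horizontal directions is not determined by the restricted $2$-forms. What the paper actually proves is a horizontality statement: writing $v=v_{\text{initial}}+v_{\text{IMD}}$ via $\widehat{\mathfrak{D}}$, one shows $d\omega_1(u,v,w)=d\omega_1(u_{\text{initial}},v_{\text{initial}},w_{\text{initial}})$, and this is where the real content lies. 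It is established not by a formal \v{C}ech manipulation but by passing to the analytic description of the tangent sheaf through the local system $\widehat{\boldsymbol{V}}=\operatorname{Ker}\tilde{\nabla}^{{\rm an}}$ (so one reduces to generic $\boldsymbol{\nu}$), choosing flat local frames $E_{\alpha}$ in which the transition data $\varphi_{\alpha\beta}$, $\mu_{\alpha\beta}$ are constant along the isomonodromic leaves; only then does $d\omega_1$ reduce to the fiber directions, where \cite[Proposition~7.3]{Inaba} gives vanishing, and a parallel computation gives $d\omega_2=0$. Your sketch already has the right complement (the image of $\widehat{\mathfrak{D}}$, which you use for non-degeneracy), but you need to replace ``restrictions are closed plus a cross-term computation'' by this horizontality-along-isomonodromic-leaves argument; the coefficient and sign bookkeeping you single out as the main obstacle is not the hard part.
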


We can obtain the above proposition by the following two propositions.

\begin{Proposition}\label{constructionof2-form} There is a non-degenerate relative $2$-form $\omega \in H^0 \big(\widehat{M}, \Omega^2_{\widehat{M}/N^{(n)}_r(e)}\big)$.
\end{Proposition}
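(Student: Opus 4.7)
The plan is to construct $\omega$ as a cup-product pairing on the hypercohomology description
$\Theta_{\widehat{M}/N^{(n)}_r(e)} \cong \mathbf{R}^1(\pi_{\widehat{M}})_*\mathcal{F}^{\bullet}$ from Proposition~\ref{Tangent sheaf Cech 1}, following the strategy of \cite{IIS,Inaba,Komyo} for the parabolic-connection moduli space, and enlarging their pairing with the curve/quadratic-differential pairing on $\mathcal{G}^{\bullet}$ already constructed in the proposition preceding Proposition~\ref{Prop symplectic}. Concretely, I would define a morphism of complexes
$\mathcal{F}^{\bullet}\otimes\mathcal{F}^{\bullet}\longrightarrow \Omega^{\bullet}_{\mathcal{C}_{\widehat{M}}/\widehat{M}}[-1]$
by combining three pieces: (i) the trace pairing $(s,v)\mapsto \mathrm{tr}\bigl((s-\iota(\tilde{\nabla})\circ\mathrm{symb}_1(s))\cdot v\bigr)$ from $\mathcal{F}^0\otimes\widetilde{\mathcal{F}}^1$ into $\Omega^1_{\mathcal{C}_{\widehat{M}}/\widehat{M}}$, responsible for the Inaba-type symplectic form on the parabolic-connection directions; (ii) the pairing from the preceding proposition applied to $\mathrm{symb}_1(s)\in \Theta_{\mathcal{C}_{\widehat{M}}/\widehat{M}}(-D(\tilde{\boldsymbol t}))$ and the $\Omega^{\otimes 2}$-component of $\mathcal{F}^1$, handling the cotangent-to-$\tilde{M}_{g,n}$ directions; (iii) correction cross-terms needed so that the total map commutes with $d_{\mathcal{F}^{\bullet}}=(d_{\tilde{\nabla}},d_{\tilde{\psi}})\circ(\mathrm{Id}-\iota(\tilde{\nabla})\circ\mathrm{symb}_1,\mathrm{symb}_1)$.

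Second, I would write this pairing explicitly in \v{C}ech cocycles on the affine open cover $\{U_\alpha\}$ used in the proof of Proposition~\ref{Tangent sheaf Cech 1}. Given two classes represented by $(\{u_{\alpha\beta}\},\{(v_\alpha,w_\alpha)\})$ and $(\{u'_{\alpha\beta}\},\{(v'_\alpha,w'_\alpha)\})$, the resulting \v{C}ech cocycle takes values in $\mathbf{R}^2(\pi_{\widehat{M}})_*\Omega^{\bullet}_{\mathcal{C}_{\widehat{M}}/\widehat{M}}$, and composing with the relative trace map $\mathbf{R}^2(\pi_{\widehat{M}})_*\Omega^{\bullet}_{\mathcal{C}_{\widehat{M}}/\widehat{M}}\to \mathcal{O}_{\widehat{M}}$ yields a scalar. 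I would then verify independence of the chosen cocycle representatives and skew-symmetry by direct manipulation, producing a relative $2$-form $\omega\in H^0(\widehat{M},\Omega^2_{\widehat{M}/N^{(n)}_r(e)})$.

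Third, I would check non-degeneracy by restricting to the zero section of $T^*\tilde{M}_{g,n}\to \tilde{M}_{g,n}$: here $d_{\tilde{\psi}}$ reduces to multiplication by zero on the quadratic-differential piece, the complex $\mathcal{F}^{\bullet}$ decomposes up to the Kodaira--Spencer coupling, and the pairing becomes block-triangular with diagonal blocks given by the Inaba symplectic form on the vertical $M_{\boldsymbol{\nu}}$-directions and the canonical form $d\theta_{\tilde{M}_{g,n}}$ on the cotangent directions. Both blocks are non-degenerate, so $\omega$ is non-degenerate along the zero section. Since the degeneracy locus of an algebraic $2$-form is closed and $\widehat{M}_{\boldsymbol{\nu}}$ is connected, openness propagates non-degeneracy to all of $\widehat{M}$. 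A dimension count using Theorem~\cite[Theorem~2.1]{Inaba} ($\dim M_{\boldsymbol{\nu}} = 2r^2(g-1)+nr(r-1)+2$) together with $\dim \tilde{M}_{g,n}=3g-3+n$ matches twice the relative dimension of $\widehat{M}\to N^{(n)}_r(e)$, which is consistent with a symplectic structure on the fibers.

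The main obstacle is arranging the cross-terms in (iii) so that the pairing is genuinely a chain map and computing the compatibility with both halves of $d_{\mathcal{F}^{\bullet}}$. The new coupling $d_{\tilde{\psi}}$ mixes the Atiyah-algebra image through $\mathrm{symb}_1$ with the quadratic-differential factor, and the pairing must respect this mixing at the cochain level; this is the key new algebraic input beyond \cite{Inaba,Komyo}. A closely related, but easier, technical point is verifying that the cocycle involved in the \v{C}ech representation of Proposition~\ref{Tangent sheaf Cech 1} actually lies in $\Omega^2_{\widehat{M}/N^{(n)}_r(e)}$ rather than the larger $\Omega^2_{\widehat{M}}$, which follows because the deformation parameters $\boldsymbol{\nu}$ were held fixed throughout the computation of $\sigma_{\widehat{U}}(v)$.
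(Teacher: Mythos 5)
Your construction of $\omega$ itself is essentially the one in the paper: the proof of Proposition~\ref{constructionof2-form} sets $\eta(s)=s-\iota\big(\tilde{\nabla}\big)\circ\mathrm{symb}_1(s)$ and defines $\omega=\omega_1+\omega_2$ in \v{C}ech terms, where $\omega_1$ is the Inaba-type trace pairing on the $\eta$- and $\widetilde{\mathcal{F}}^1$-components and $\omega_2$ is exactly the pairing of the proposition preceding Proposition~\ref{Prop symplectic} applied to $\mathrm{symb}_1(u_{\alpha\beta})$ and the $\Omega^{\otimes 2}$-components; no further cross-terms are needed, and the compatibility with $d_{\mathcal{F}^{\bullet}}$ that you single out as the main obstacle is precisely the (implicit) verification that $\omega_1(v,w)+\omega_2(v,w)$ is a well-defined class in ${\mathbf H}^2\big(\Omega^{\bullet}_{\mathcal{C}\times_T\widehat{U}/\widehat{U}}\big)$ independent of the chosen cocycles. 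Up to that point your plan and the paper's proof coincide.

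The genuine gap is in your non-degeneracy step. Non-degeneracy along the zero section together with closedness of the degeneracy locus only shows that the non-degenerate locus is open and nonempty; connectedness of $\widehat{M}_{\boldsymbol{\nu}}$ does not force a nonempty open subset to be everything, and a priori the degeneracy locus could be a nonempty closed subset (the vanishing locus of $\omega^{\wedge N}$, $N$ half the relative dimension) disjoint from the zero section. So ``openness propagates non-degeneracy to all of $\widehat{M}$'' is not a valid inference, and the dimension count is only a consistency check. The paper never restricts to the zero section: skew-symmetry and non-degeneracy are obtained at every point by the same duality argument as in \cite[Proposition~7.2]{Inaba}, namely the cocycle-level pairing exhibits a perfect pairing of the complex $\mathcal{F}^{\bullet}$ with itself into $\Omega^{\bullet}_{\mathcal{C}_{\widehat{M}}/\widehat{M}}$, and Serre--Grothendieck duality then makes the induced pairing on $\mathbf{R}^1(\pi_{\widehat{M}})_*(\mathcal{F}^{\bullet})$ perfect fiberwise. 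To salvage your route you would need a further global input, for instance that $\omega$ is preserved by translation by exact $1$-forms along the affine fibers of $\widehat{M}\rightarrow M$ (which is essentially the twisted-cotangent-bundle property of Theorem~\ref{Main Thm 2}, established only later), or that $\omega^{\wedge N}$ is constant along those fibers. Note also that on the zero section the block you call ``the Inaba symplectic form on the vertical $M_{\boldsymbol{\nu}}$-directions'' is degenerate along the isomonodromy directions once the curve is allowed to move, so even the zero-section statement requires using the off-diagonal Serre-duality pairing between $H^1(\Theta_C(-D(\boldsymbol{t})))$ and $H^0\big(\Omega_C^{\otimes 2}(D(\boldsymbol{t}))\big)$ rather than a clean block-diagonal reduction.
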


\begin{proof}We set $\eta(s) := s - \iota\big(\tilde{\nabla}\big) \circ \mathrm{symb}_1(s) \in \mathcal{E}{\rm nd}\big(\tilde{E}\big) $, where $s \in \mathcal{F}^0$. For
\begin{gather*}
v=[(\{ u_{\alpha,\beta} \}, \{ (v_{\alpha}, w_{\alpha} )\})] \in {\mathbf H}^1\big(\mathcal{C} \times_T \widehat{U}, \mathcal{F}_{\widehat{U}}^{\bullet}\big) \qquad \text{and}\\
w=[(\{ u_{\alpha,\beta}' \}, \{ (v_{\alpha}',w_{\alpha}') \} )] \in {\mathbf H}^1\big(\mathcal{C} \times_T \widehat{U}, \mathcal{F}_{\widehat{U}}^{\bullet}\big),
\end{gather*} we put
\begin{gather}
\omega_1(v,w) = [ (\{ \operatorname{Tr}( \eta(u_{\alpha\beta}) \circ \eta(u_{\beta\gamma}')) \},
- \{ \operatorname{Tr} (\eta(u_{\alpha\beta}) \circ v_{\beta}') - \operatorname{Tr} (v_{\alpha} \circ \eta( u'_{\alpha\beta})) \} )]\qquad \text{and}\label{defof2form}\\
\omega_2(v,w) = [ \{ 2 \cdot \mathrm{symb}_1(u_{\beta\alpha}) \circ \mathrm{symb}_1(u'_{\beta\gamma}) \circ \psi_{\beta} \},{-}\{ \mathrm{symb}_1(u_{\beta\alpha}) \circ w_{\beta}'+ w_{\alpha}
\circ \mathrm{symb}_1(u'_{\alpha\beta}) \} ].\!\nonumber
\end{gather}
For each affine open subset $\widehat{U} \subset \widehat{M}$, we define a pairing
\begin{gather*}
{\mathbf H}^1\big(\mathcal{C} \times_T \widehat{U}, \mathcal{F}_{\widehat{U}}^{\bullet}\big) \otimes
{\mathbf H}^1\big(\mathcal{C} \times_T \widehat{U}, \mathcal{F}_{\widehat{U}}^{\bullet}\big)
\longrightarrow {\mathbf H}^2\big(\mathcal{C} \times_T \widehat{U}, \Omega_{\mathcal{C}\times_T \widehat{U}/\widehat{U}}^{\bullet}\big) \cong H^0\big(\mathcal{O}_{\widehat{U}}\big), \\
v\otimes w \longmapsto \omega_1(v,w) + \omega_2(v,w),
\end{gather*}
where we consider in \v{C}ech cohomology with respect to an affine open covering $\{ U_{\alpha} \}$ of $\mathcal{C} \times_TU$, $\{ u_{\alpha\beta} \} \in C^1\big(\mathcal{F}^0\big)$, $\{ (v_{\alpha}, w_{\alpha}) \} \in C^0\big(\mathcal{F}^1\big)$ and so on. This pairing determines a pairing
\begin{gather*}
\omega \colon \ \bold{R}^1 (\pi_{\widehat{M}})_*(\mathcal{F}^{\bullet}) \otimes \bold{R}^1 (\pi_{\widehat{M}})_*(\mathcal{F}^{\bullet}) \longrightarrow \mathcal{O}_{\widehat{M}}.
\end{gather*}
By the same argument as in the proof of \cite[Proposition~7.2]{Inaba}, $\omega$ is skew symmetric and non-degenerate.
\end{proof}

\begin{Proposition}\label{Prop closed form} For the $2$-form constructed in Proposition~{\rm \ref{constructionof2-form}}, we have $d \omega =0$.
\end{Proposition}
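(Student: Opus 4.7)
The plan is to decompose $\omega$ as a sum of two pullbacks of closed forms via the natural projections
\[
\pi_1 \colon \widehat{M} \longrightarrow M, \qquad \hat{\pi} \colon \widehat{M} \longrightarrow T^*\tilde{M}_{g,n}
\]
associated with the fiber product structure $\widehat{M}=M\times_{\tilde{M}_{g,n}}T^*\tilde{M}_{g,n}$. The expected identification is $\omega = \pi_1^*\omega_M + \hat{\pi}^* d\theta_{\tilde{M}_{g,n}}$, where $\omega_M$ is Inaba's relative symplectic $2$-form on $M$ over $N^{(n)}_r(e)$ (see \cite{IIS,Inaba}), and $d\theta_{\tilde{M}_{g,n}}$ is the canonical symplectic form on $T^*\tilde{M}_{g,n}$ computed in the preceding proposition. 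Once this decomposition is established, the closedness $d\omega=0$ follows at once: Inaba proves $d\omega_M=0$ in the logarithmic setting, $d(d\theta_{\tilde{M}_{g,n}})=0$ automatically, and exterior differentiation commutes with pullback.

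To verify $\omega_1=\pi_1^*\omega_M$, I would observe that the defining formula \eqref{defof2form} for $\omega_1$ uses only the $\mathcal{F}^0$-component $u_{\alpha\beta}$ and the $\widetilde{\mathcal{F}}^1$-component $v_\alpha$, and is independent of the $\Omega^{\otimes 2}$-component $w_\alpha$. The induced tangent map of $\pi_1$ on the \v{C}ech descriptions of the tangent sheaves in Proposition~\ref{Tangent sheaf Cech 1} is the forgetful map $[(\{u_{\alpha\beta}\},\{(v_\alpha,w_\alpha)\})]\mapsto[(\{u_{\alpha\beta}\},\{v_\alpha\})]$, induced by base change $\mathcal{C}_{\widehat{M}}\to\mathcal{C}_M$ together with projection onto the first summand $\widetilde{\mathcal{F}}^1 \hookrightarrow \mathcal{F}^1$. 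Hence $\omega_1$ and $\pi_1^*\omega_M$ are given by the same trace-pairing formula on the same \v{C}ech data.

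To verify $\omega_2 = \hat{\pi}^* d\theta_{\tilde{M}_{g,n}}$, I would note that $\omega_2$ depends only on $\mathrm{symb}_1(u_{\alpha\beta})$ and $w_\alpha$. The tangent map of $\hat{\pi}$ is induced by the morphism of complexes $\mathcal{F}^\bullet \to \mathcal{G}^\bullet$ given by applying $\mathrm{symb}_1$ in degree zero and projecting onto $\Omega^{\otimes 2}_{\mathcal{C}_{\widehat{M}}/\widehat{M}}(D(\tilde{\boldsymbol{t}}))$ in degree one, so on \v{C}ech representatives it sends $[(\{u_{\alpha\beta}\},\{(v_\alpha,w_\alpha)\})]$ to $[(\{\mathrm{symb}_1(u_{\alpha\beta})\},\{w_\alpha\})]$. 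Substituting this into the formula for $d\theta_{\tilde{M}_{g,n}}$ established in the preceding proposition (under $d_{\alpha\beta}\leftrightarrow \mathrm{symb}_1(u_{\alpha\beta})$) reproduces the formula for $\omega_2$ term by term, including the factor of $2$ in front of $\psi_\beta$ and the signs in the degree-one term.

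The main obstacle is the first identification: one must carefully check that the trace-pairing expression for $\omega_1$ is literally Inaba's formula for $\omega_M$, and that the \v{C}ech representatives transform as expected under $\pi_{1*}$. This requires tracing through the identifications of tangent sheaves with hypercohomology in Proposition~\ref{Tangent sheaf Cech 1} and confirming compatibility of the complexes $\mathcal{F}^\bullet$ on $\mathcal{C}_{\widehat{M}}$ with $\mathcal{F}_M^0\to\widetilde{\mathcal{F}}_M^1$ on $\mathcal{C}_M$ under base change. The second identification, by contrast, is a direct comparison of \v{C}ech cocycles once the symbol map is understood as the tangent of $\hat\pi$.
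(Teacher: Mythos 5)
Your identification $\omega_2=\hat{\pi}^*\, d\theta_{\tilde{M}_{g,n}}$ is correct and is exactly what the proposition preceding Proposition~\ref{Prop symplectic} provides: the tangent map of $\hat{\pi}$ sends $[(\{u_{\alpha\beta}\},\{(v_\alpha,w_\alpha)\})]$ to $[(\{\mathrm{symb}_1(u_{\alpha\beta})\},\{w_\alpha\})]$, and substituting into that pairing reproduces the second formula in (\ref{defof2form}); so $d\omega_2=0$ follows cleanly. The formal observation that $\omega_1$ factors through the tangent map of $\pi_1\colon\widehat{M}_{\boldsymbol{\nu}}\rightarrow M_{\boldsymbol{\nu}}$, i.e.\ $\omega_1=\pi_1^*\omega_M$ for the $2$-form $\omega_M$ on $M_{\boldsymbol{\nu}}$ given by the same trace pairing, is also fine.

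The gap is the assertion that ``Inaba proves $d\omega_M=0$''. What \cite{Inaba} provides (Propositions~7.2 and~7.3) is a \emph{relative} symplectic form over $\tilde{M}_{g,n}\times N^{(n)}_r(e)$: closedness is established only in the directions where the pointed curve is fixed. The form $\omega_M$ you need is an absolute $2$-form on the curve-varying space $M_{\boldsymbol{\nu}}$, namely the extension of the fiberwise symplectic form obtained via the splitting $\iota\big(\tilde{\nabla}\big)$ (equivalently, with the isomonodromic directions in its kernel), and its closedness in the directions transverse to the fixed-curve fibers is precisely the nontrivial content of the present proposition; since $\pi_1$ is a smooth surjection, $d\omega_1=0$ on $\widehat{M}_{\boldsymbol{\nu}}$ is equivalent to $d\omega_M=0$ on $M_{\boldsymbol{\nu}}$, so the pullback reduction by itself gains nothing. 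The paper supplies exactly this missing step: it splits each tangent vector as $v=v_{\text{initial}}+v_{\text{IMD}}$ using the lift $\widehat{\mathfrak{D}}$ of the isomonodromic connection, passes to the analytic description in which the transition data of $j_*\big(\widehat{\boldsymbol{V}}\big)$ are constant along isomonodromic leaves, deduces $d\omega_1(u,v,w)=d\omega_1(u_{\text{initial}},v_{\text{initial}},w_{\text{initial}})$, and only then invokes \cite[Proposition~7.3]{Inaba} for the fixed-curve directions (this is also why the paper cites that proposition only for ``initial'' vectors rather than for $d\omega_1=0$ outright). Your proposal omits this isomonodromy argument, or any substitute for it, and therefore assumes the hardest part of the statement.
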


\begin{proof}Let $\Theta_{\widehat{M}_{\boldsymbol{\nu}}}^{\text{initial}}$ be the subbundle of $\Theta_{\widehat{M}_{\boldsymbol{\nu}}}$ consisted by the images of the tangent morphism $\Theta_{\widehat{M}_{\boldsymbol{\nu}}/T^*\tilde{M}_{g,n} } \rightarrow \Theta_{\widehat{M}_{\boldsymbol{\nu}}}$ and let $\Theta_{\widehat{M}_{\boldsymbol{\nu}}}^{\text{IMD}}$ be the subbundle of $\Theta_{\widehat{M}_{\boldsymbol{\nu}}}$ consisted by the images of $\widehat{\mathfrak{D}}\big(\hat{\pi}_{\boldsymbol{\nu}}^* \big(\Theta_{T^*\tilde{M}_{g,n}}\big)\big) \rightarrow \Theta_{\widehat{M}_{\boldsymbol{\nu}}}$. We take an affine open set $\widehat{U}\subset \widehat{M}_{\boldsymbol{\nu}}$. We have a canonical decomposition
\begin{gather*}
{\mathbf H}^1\big(\mathcal{F}_{\widehat{U}}^{\bullet}\big) \longrightarrow \Theta^{\text{initial}}_{\widehat{U}} \oplus \Theta^{\text{IMD}}_{\widehat{U}} , \qquad v=[\{ u_{\alpha\beta} \} , \{ (v_{\alpha},w_{\alpha}) \}] \longmapsto v_{\text{initial}}+ v_{\text{IMD}},
\end{gather*}
where
\begin{gather*}
v_{\text{initial}} = [\{ \eta (u_{\alpha\beta}) \} , \{ (v_{\alpha},0) \}] \qquad \text{and} \qquad v_{\text{IMD}} = \big[ \big\{ \iota\big(\tilde{\nabla}\big) \circ \mathrm{symb}_1(u_{\alpha\beta}) \big\}, \{(0 ,w_{\alpha} )\} \big].
\end{gather*}
We may assume that $\boldsymbol{\nu}$ is generic. Let $\widehat{U}$ be an affine open set of $\widehat{M}_{\boldsymbol{\nu}}$ and let $\big(\tilde{E},\tilde{\nabla} ,\big\{ \tilde{l}^{(i)}_j\big\}, \tilde{\psi}\big)$ be the family on $ \mathcal{C} \times_{\tilde{M}_{g,n}} \widehat{U}$. We take an affine open covering $\mathcal{C}_{\widehat{U}} = \bigcup_{\alpha} U_{\alpha}$ such that $\phi_{\alpha} \colon \tilde{E}|_{U_{\alpha}} \xrightarrow{\sim} \mathcal{O}^{\oplus r}_{U_{\alpha}}$ for any $\alpha$, $\sharp\{ i \,|\, \tilde{t}_i |_{\mathcal{C}_U} \cap U_{\alpha} \neq \varnothing \} \le 1$ for any $\alpha$ and $\sharp\{ \alpha \,|\, \tilde{t}_i |_{\mathcal{C}_U} \cap U_{\alpha} \neq \varnothing \} \le 1$ for any $i$. If we replace~$U_{\alpha}$ sufficiently smaller, there exists a sheaf $E_{\alpha}$ on $U_{\alpha}$ such that $E_{\alpha}|_{U_{\alpha}\cap U_{\beta}} \cong \big(\pi^{-1}_{\widehat{M}_{\boldsymbol{\nu}}} \mathcal{O}_{\widehat{M}_{\boldsymbol{\nu}}}|_{U_{\alpha}\cap U_{\beta}} \big)^{\oplus r^2}$ for any $\beta \neq \alpha$ and an isomorphism $\phi_{\alpha} \colon j_*\big(\widehat{\boldsymbol{V}}\big)|_{U_{\alpha}} \xrightarrow{\sim} E_{\alpha}$. Here the local system $\widehat{\boldsymbol{V}}$ is defined in Section~\ref{SS infinit defor}. For each $\alpha$, $\beta$, we put
\begin{gather*}
\varphi_{\alpha\beta} \colon \ E_{\beta}|_{U_{\alpha}\cap U_{\beta}} \xrightarrow{\phi_{\beta}^{-1}} j_*\big( \widehat{\boldsymbol{V}}\big)|_{U_{\alpha}\cap U_{\beta}} \xrightarrow{\phi_{\alpha}} E_{\alpha}|_{U_{\alpha}\cap U_{\beta}}.
\end{gather*}
For each $\alpha$, $\beta$, let $\mu_{\alpha\beta} \colon U_{\alpha\beta} \rightarrow U_{\alpha\beta}$ be an isomorphism such that the glueing scheme of the collection $(U_{\alpha}, U_{\alpha\beta}, \mu_{\alpha\beta})$ is isomorphic to $\mathcal{C}_{\widehat{U}}$.

We consider a vector field $v \in H^0 \big(\widehat{U},\Theta_{\widehat{U}}\big)$. Then $v$ corresponds to a derivation $D_{v} \colon \mathcal{O}_{\widehat{U}} \rightarrow \mathcal{O}_{\widehat{U}}$ which naturally induces a morphism
\begin{gather*}
D_{v} \colon \ \mathcal{H}{\rm om} (E_{\beta}|_{U_{\alpha}\cap U_{\beta}} , E_{\alpha}|_{U_{\alpha}\cap U_{\beta}}) \longrightarrow \mathcal{H}{\rm om} (E_{\beta}|_{U_{\alpha}\cap U_{\beta}} , E_{\alpha}|_{U_{\alpha}\cap U_{\beta}}).
\end{gather*}

The isomorphism $\Theta_{\widehat{M}_{\boldsymbol{\nu}}} \cong \bold{R}^1(\pi_{\widehat{M}_{\boldsymbol{\nu}}})_*\big(\big(\widehat{\mathcal{F}}^{\bullet}\big)^{{\rm an}}\big)$ is given by
\begin{gather*}
\Theta_{\widehat{M}_{\boldsymbol{\nu}}} \ni v \longmapsto
\big[\big\{ \big( \phi_{\alpha}^{-1} \circ D_v (\varphi_{\alpha\beta}) \circ \phi_{\beta}, D_v (\mu_{\alpha\beta}) \big) \big\}
, \{ D_v(\psi|_{U_{\alpha}}) \}\big]
\in \bold{R}^1(\pi_{\widehat{M}_{\boldsymbol{\nu}}})_*\big(\big(\widehat{\mathcal{F}}^{\bullet}\big)^{{\rm an}}\big),
\end{gather*}
and the 2-form $\omega(u,v)=\omega_1(u,v) +\omega_2(u,v)$, $u,v \in \Theta_{\widehat{M}}$,
is given by
\begin{gather*}
\omega_1(u,v) = [ \{ \operatorname{Tr} \left( D_{u_{\text{initial}}} (\varphi_{\alpha\beta})
 D_{v_{\text{initial}}} (\varphi_{\beta\gamma}) \varphi_{\gamma\alpha} \right) \}] \qquad \text{and}\\
\omega_2(u,v) =[ \{2 D_{u_{\text{IMD}}}(\mu_{\beta\alpha}) D_{v_{\text{IMD}}}(\mu_{\beta\alpha}) \tilde{\psi}_{\beta}|_{U_{\alpha\beta}} \}, \\
\hphantom{\omega_2(u,v) =[}{} \{ - D_{u_{\text{IMD}}}(\mu_{\beta\alpha})
D_{v_{\text{IMD}}}(\tilde{\psi}_{\beta}|_{U_{\alpha\beta}})
- D_{u_{\text{IMD}}}(\tilde{\psi}_{\alpha}|_{U_{\alpha\beta}}) D_{v_{\text{IMD}}}(\mu_{\alpha\beta}) \}].
\end{gather*}
Since the image of $\Theta^{\text{IMD}}_{\widehat{U}}$ under the tangent morphism of $\widehat{M}_{\boldsymbol{\nu}} \rightarrow M_{\boldsymbol{\nu}}$ determines the foliation determined by the isomonodromic deformations, we can show that
\begin{gather*}
d \omega_1 (u,v,w) = d \omega_1 (u_{\text{initial}},v_{\text{initial}},w_{\text{initial}}).
\end{gather*}
We have $d \omega_1 (u_{\text{initial}},v_{\text{initial}},w_{\text{initial}})=0$ by \cite[Proposition~7.3]{Inaba}. We can also show that $d \omega_2 (u,v,w) =0$. Then we have the closeness of $\omega=\omega_1 +\omega_2$.
\end{proof}

\subsection{Extended phase spaces of isomonodromic deformations}\label{Extended phase space}

\begin{Proposition}\label{Main Thm 1} The morphism $\hat{\pi}_{\boldsymbol{\nu}} \colon\widehat{M}_{\mathcal{C}/\tilde{M}_{g,n}}^{\boldsymbol{\alpha}}\big(\tilde{\boldsymbol{t}},r,e\big)_{\boldsymbol{\nu}} \rightarrow T^* \tilde{M}_{g,n}$ is a Poisson map.
\end{Proposition}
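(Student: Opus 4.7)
The plan is to exploit the decomposition $\Theta_{\widehat{M}_{\boldsymbol{\nu}}} = \Theta^{\mathrm{initial}}_{\widehat{M}_{\boldsymbol{\nu}}} \oplus \Theta^{\mathrm{IMD}}_{\widehat{M}_{\boldsymbol{\nu}}}$ introduced in the proof of Proposition~\ref{Prop closed form}. Here $\Theta^{\mathrm{initial}}$ is precisely $\ker d\hat{\pi}_{\boldsymbol{\nu}}$, while $\Theta^{\mathrm{IMD}} = \widehat{\mathfrak{D}}\bigl( \hat{\pi}_{\boldsymbol{\nu}}^* \Theta_{T^*\tilde{M}_{g,n}} \bigr)$ is the horizontal lift produced by~(\ref{def hatD}); their ranks add to $\dim \widehat{M}_{\boldsymbol{\nu}}$. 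It will suffice to show that the two summands are mutually $\omega$-orthogonal and that $\omega |_{\Theta^{\mathrm{IMD}} \times \Theta^{\mathrm{IMD}}}$ agrees with $\hat{\pi}_{\boldsymbol{\nu}}^*(d\theta_{\tilde{M}_{g,n}})$: the Poisson property will then follow from a standard Hamiltonian-vector-field argument.

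The first step is a direct \v{C}ech-cocycle computation. For any $\widehat{\mathfrak{D}}$-lift, represented as $[\{\iota(\tilde{\nabla})(d_{\alpha\beta})\}, \{(0,w_\alpha)\}]$, the splitting~(\ref{splittingnabla}) yields the two tautological identities $\eta(\iota(\tilde{\nabla})(d_{\alpha\beta})) = 0$ and $\mathrm{symb}_1(\iota(\tilde{\nabla})(d_{\alpha\beta})) = d_{\alpha\beta}$. Plugging these into~(\ref{defof2form}), together with $\mathrm{symb}_1 \circ \eta = 0$ and the vanishing of the $w_\alpha$-component on representatives of $\Theta^{\mathrm{initial}}$, one sees termwise that $\omega$ vanishes on $\Theta^{\mathrm{initial}} \times \Theta^{\mathrm{IMD}}$ and that $\omega_1$ vanishes on $\Theta^{\mathrm{IMD}} \times \Theta^{\mathrm{IMD}}$. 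Moreover, evaluating $\omega_2$ on the pair of $\widehat{\mathfrak{D}}$-lifts of $[\{d_{\alpha\beta}\},\{w_\alpha\}]$ and $[\{d'_{\alpha\beta}\},\{w'_\alpha\}]$ produces the cocycle
\begin{gather*}
\bigl[\{2\, d_{\beta\alpha} \circ d'_{\beta\gamma} \circ \psi_\beta\},\, \{-d_{\beta\alpha} \circ w'_\beta - w_\alpha \circ d'_{\alpha\beta}\}\bigr],
\end{gather*}
which is exactly the \v{C}ech expression for $d\theta_{\tilde{M}_{g,n}}$ derived just before Proposition~\ref{Prop symplectic}. This gives the desired identification $\omega|_{\Theta^{\mathrm{IMD}} \times \Theta^{\mathrm{IMD}}} = \hat{\pi}_{\boldsymbol{\nu}}^*(d\theta_{\tilde{M}_{g,n}})$.

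Non-degeneracy of $\omega$ together with the rank count then forces $(\Theta^{\mathrm{initial}})^{\omega} = \Theta^{\mathrm{IMD}}$. Hence for any local function $f$ on $T^*\tilde{M}_{g,n}$, the 1-form $d(\hat{\pi}_{\boldsymbol{\nu}}^* f)$ annihilates $\Theta^{\mathrm{initial}}$, so its $\omega$-dual Hamiltonian vector field $X_{\hat{\pi}_{\boldsymbol{\nu}}^* f}$ necessarily lies in $\Theta^{\mathrm{IMD}}$. Translating the Hamiltonian equation across the symplectic isomorphism $\widehat{\mathfrak{D}} \colon \hat{\pi}_{\boldsymbol{\nu}}^* \Theta_{T^*\tilde{M}_{g,n}} \xrightarrow{\sim} \Theta^{\mathrm{IMD}}$ supplied by the previous paragraph gives $X_{\hat{\pi}_{\boldsymbol{\nu}}^* f} = \widehat{\mathfrak{D}}(X_f)$, and in particular $d\hat{\pi}_{\boldsymbol{\nu}}(X_{\hat{\pi}_{\boldsymbol{\nu}}^* f}) = X_f$. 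The Poisson equality $\{\hat{\pi}_{\boldsymbol{\nu}}^* f, \hat{\pi}_{\boldsymbol{\nu}}^* g\}_{\widehat{M}_{\boldsymbol{\nu}}} = \hat{\pi}_{\boldsymbol{\nu}}^* \{f,g\}_{T^*\tilde{M}_{g,n}}$ is then immediate.

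The main obstacle is the bookkeeping of the \v{C}ech-cocycle computation in the first step: the pairings $\omega_1$ on $\Theta^{\mathrm{IMD}} \times \Theta^{\mathrm{IMD}}$ and $\omega_1, \omega_2$ on $\Theta^{\mathrm{initial}} \times \Theta^{\mathrm{IMD}}$ each mix the $\{u_{\alpha\beta}\}$-part and the $\{(v_\alpha,w_\alpha)\}$-part of the representative, and one must track these mixed contributions carefully; but all terms collapse via the two splitting identities $\eta \circ \iota(\tilde{\nabla}) = 0$ and $\mathrm{symb}_1 \circ \iota(\tilde{\nabla}) = \mathrm{id}$ coming from~(\ref{splittingnabla}), so the computation is in principle mechanical.
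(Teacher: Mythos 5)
Your proposal is correct and is essentially the paper's own argument: your \v{C}ech computations (orthogonality of $\Theta^{\mathrm{initial}}$ and $\Theta^{\mathrm{IMD}}$ under $\omega$, and $\omega|_{\Theta^{\mathrm{IMD}}\times\Theta^{\mathrm{IMD}}}=\hat{\pi}_{\boldsymbol{\nu}}^*\big(d\theta_{\tilde{M}_{g,n}}\big)$ via the splitting identities for $\iota\big(\tilde{\nabla}\big)$) amount precisely to the commutativity of the diagram~(\ref{diagramHatD}), which, together with $\hat{\pi}_{\boldsymbol{\nu}}^t\circ\widehat{\mathfrak{D}}=\mathrm{id}$, is exactly what the paper's proof invokes before the same standard Hamiltonian-vector-field conclusion. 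You have simply supplied the details the paper leaves implicit.
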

\begin{proof}

Let $\hat{\pi}_{\boldsymbol{\nu}}^t \colon \Theta_{\widehat{M}_{\boldsymbol{\nu}}} \rightarrow \pi^* \Theta_{T^*\tilde{M}_{g,n}}$ be the tangent morphism. We denote by $\tilde{\xi}\colon \Theta_{T^*\tilde{M}_{g,n}} \rightarrow \Omega^1_{T^*\tilde{M}_{g,n}}$ and $\xi \colon \Theta_{\widehat{M}_{\boldsymbol{\nu}}} \rightarrow \Omega^1_{\widehat{M}_{\boldsymbol{\nu}}}$ the homomorphisms induced by the symplectic structures on $T^*\tilde{M}_{g,n}$ and $\widehat{M}_{\boldsymbol{\nu}}$, respectively. The assertion follows from that the following diagram
\begin{gather}\label{diagramHatD}\begin{split}&
\xymatrix{\pi^* \Theta_{T^*\tilde{M}_{g,n}} \ar[d]^-{\widehat{\mathfrak{D}} }
\ar[r]^-{\pi^*(\tilde{\xi})}
& \pi^* \Omega^1_{T^*\tilde{M}_{g,n}} \ar[d] \\
\Theta_{\widehat{M}_{\boldsymbol{\nu}}} \ar[r]^-{\xi} & \Omega^1_{\widehat{M}_{\boldsymbol{\nu}}}}\end{split}
\end{gather}
is commutative and $\hat{\pi}_{\boldsymbol{\nu}}^t \circ \widehat{\mathfrak{D}} = \mathrm{id}$. Here, $\widehat{\mathfrak{D}} $ is the homomorphism (\ref{def hatD}).
\end{proof}

Let $\mu_1 , \ldots \mu_{3g-3 + n}$ be local vector fields on an affine open subset $U \subset \tilde{M}_{g,n}$. Let $h_i$ be a linear function on $T^* \tilde{M}_{g,n}$ corresponding to the local vector field $\mu_i$ on $U$. Assume that $\{ h_i,h_j \}_{\tilde{M}_{g,n}} = 0$ for $i,j=1 ,\ldots, 3g-3 + n$ and $dh_1 \wedge \cdots \wedge dh_{3g-3+n}$ is not identically $0$, where $\{\cdot , \cdot \}_{\tilde{M}_{g,n}}$ is the Poisson bracket associated to the symplectic structure $\omega_{\tilde{M}_{g,n}}$. Put $\widehat{U} = \big(\hat{\pi}_{\boldsymbol{\nu}}\circ p_{\tilde{M}_{g,n}}\big)^{-1}(U)$, where $\hat{\pi}_{\boldsymbol{\nu}} \colon \widehat{M}_{\mathcal{C}/\tilde{M}_{g,n}}^{\boldsymbol{\alpha}}\big(\tilde{\boldsymbol{t}},r,e\big)_{\boldsymbol{\nu}} \rightarrow T^* \tilde{M}_{g,n}$ and $p_{\tilde{M}_{g,n}} \colon T^*\tilde{M}_{g,n} \rightarrow \tilde{M}_{g,n}$. Let $\omega_{T^*\tilde{M}_{g,n}}$ be the symplectic structure on $T^*\tilde{M}_{g,n}$. We define a \textit{Hamiltonian} $E_i$ on $\widehat{U} $ as $ \hat{\pi}^* h_i$ for $i=1 ,\ldots, 3g-3 + n$. Let $a_i$ $(i=1,\ldots,3g-3+n)$ be constants. We call the \textit{Hamiltonian vector field on $\widehat{U}$ associated to $\sum\limits_{i=1}^{3g-3+n} a_i \mu_i$} the vector field $\sum\limits_{i=1}^{3g-3+n} a_i\{ \cdot, E_i \}$ on~$\widehat{U}$.

\begin{Proposition}\label{Prop autonomous Hamil system} First, the Hamiltonians $E_i$ satisfy $\{ E_i,E_j \} = 0$ for $i,j=1 ,\ldots, 3g-3 + n$. In particular, the functions $E_i$ are conserved quantities associated to the Hamiltonian vector fields. Second, the restriction of the Hamiltonian vector field associated to $\sum\limits_{i=1}^{3g-3+n} a_i \mu_i$ to the common level surface $E_1= 0, \dots , E_{3g-3+n} = 0$ in $\widehat{U}$, is coincide with $\mathfrak{D}\Big(\sum\limits_{i=1}^{3g-3+n} a_i \mu_i\Big)$, which is a vector field associated to isomonodromic deformations. Here we consider the vector field $\sum\limits_{i=1}^{3g-3+n} a_i \mu_i$ as an element of $\pi_{\boldsymbol{\nu}}^{*} (\Theta_{\tilde{M}_{g,n}}) \big(\widehat{U}\big)$.
\end{Proposition}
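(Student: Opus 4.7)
Both assertions reduce to Proposition~\ref{Main Thm 1}. For the first, since $E_i = \hat{\pi}_{\boldsymbol{\nu}}^* h_i$ and $\hat{\pi}_{\boldsymbol{\nu}}$ is a Poisson map, one has $\{E_i, E_j\} = \hat{\pi}_{\boldsymbol{\nu}}^* \{h_i, h_j\}_{T^*\tilde{M}_{g,n}} = 0$; conservation of the $E_i$ along the flows of the $E_j$ is then just a restatement of this Poisson-commutation.

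For the second assertion, I would first identify the common zero locus. Since the $h_i$ are linear along the fibers of $p_{\tilde{M}_{g,n}}\colon T^*\tilde{M}_{g,n}\to \tilde{M}_{g,n}$ and $dh_1\wedge\cdots\wedge dh_{3g-3+n}$ is not identically zero, the hypersurfaces $\{h_i = 0\}$ meet (generically) in the zero section of $T^*\tilde{M}_{g,n}$; hence the locus $\{E_1 = \cdots = E_{3g-3+n} = 0\}$ inside $\widehat{U}$ is the intersection of $\widehat{U}$ with the image of the natural inclusion $M_{\boldsymbol{\nu}}\hookrightarrow \widehat{M}_{\boldsymbol{\nu}}$ given by the zero section. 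Next I would apply the diagram~(\ref{diagramHatD}): writing $X_{h_i}$ for the Hamiltonian vector field of $h_i$ on $T^*\tilde{M}_{g,n}$, so that $\tilde{\xi}(X_{h_i}) = dh_i$, commutativity yields
\begin{gather*}
\xi\bigl(\widehat{\mathfrak{D}}(X_{h_i})\bigr) = \hat{\pi}_{\boldsymbol{\nu}}^*(dh_i) = dE_i,
\end{gather*}
so $X_{E_i} = \widehat{\mathfrak{D}}(X_{h_i})$. A short computation in local Darboux coordinates $(q,p)$ on $T^*\tilde{M}_{g,n}$ shows that $X_{h_i}|_{p=0}$ equals the horizontal lift of $\mu_i$ (up to the global sign dictated by the convention for $\omega_{\tilde{M}_{g,n}}$); by the explicit formula~(\ref{def hatD}) the map $\widehat{\mathfrak{D}}$ then sends this horizontal lift, represented by a cocycle $[\{d_{\alpha\beta}\}, \{0\}]$ whose $d_{\alpha\beta}$-part is the Kodaira--Spencer cocycle of $\mu_i$, to $\mathfrak{D}(\mu_i)$. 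This proves the equality on the zero section, and linearity in the $a_i$ extends it to arbitrary combinations $\sum_i a_i \mu_i$.

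The main obstacle is bookkeeping rather than conceptual: one must verify that, under the isomorphism $\hat{\pi}_{\boldsymbol{\nu}}^* \Theta_{T^*\tilde{M}_{g,n}} \cong \bold{R}^1(\pi_{\widehat{M}_{\boldsymbol{\nu}}})_* (\mathcal{G}^{\bullet})$ used in constructing $\widehat{\mathfrak{D}}$, the horizontal lift of $\mu_i$ at a point of the zero section really is represented by a cocycle of the form $[\{d_{\alpha\beta}\},\{0\}]$ with $d_{\alpha\beta}$ the Kodaira--Spencer representative of $\mu_i$. This requires unpacking the \v{C}ech description of the canonical 1-form $\theta_{\tilde{M}_{g,n}}$ introduced in Section~\ref{SS Sympl and Hamilt} and keeping sign conventions for the symplectic forms consistent throughout.
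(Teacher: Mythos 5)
Your proposal is correct and follows essentially the same route as the paper: the commutation $\{E_i,E_j\}=\{h_i,h_j\}_{\tilde{M}_{g,n}}=0$ comes from the Poisson-map property underlying Proposition~\ref{Main Thm 1}, and the second claim is obtained by identifying the level set $E_1=\cdots=E_{3g-3+n}=0$ with $M_{\mathcal{C}/\tilde{M}_{g,n}}^{\boldsymbol{\alpha}}\big(\tilde{\boldsymbol{t}},r,e\big)_{\boldsymbol{\nu}}$ and using the diagram~(\ref{diagramHatD}) together with the formula~(\ref{def hatD}) to see that the Hamiltonian vector field $\widehat{\mathfrak{D}}(v_{h_i})=\big[\big\{\iota\big(\tilde{\nabla}\big)\big(d^{h_i}_{\alpha\beta}\big)\big\},\{(0,w^{h_i}_{\alpha})\}\big]$ restricts there to $\big[\big\{\iota\big(\tilde{\nabla}\big)\big(d^{h_i}_{\alpha\beta}\big)\big\},\{0\}\big]=\mathfrak{D}(\mu_i)$. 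The only cosmetic difference is that you verify the bookkeeping step in local Darboux coordinates, while the paper reads it off directly from the \v{C}ech representative $v_{h_i}=\big[\big\{d^{h_i}_{\alpha\beta}\big\},\big\{w^{h_i}_{\alpha}\big\}\big]$, whose $w$-part vanishes on the level surface.
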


\begin{proof}Let $\{ \cdot , \cdot \}_{\tilde{M}_{g,n}}$ be the Poisson bracket associated to the symplectic structure $\omega_{T^*\tilde{M}_{g,n}}$ on~$T^*\tilde{M}_{g,n}$. Let $v_{h_i}$ be the element $\hat{\pi}_{\boldsymbol{\nu}}^{*} (\Theta_{T^*\tilde{M}_{g,n}}) \big(\widehat{U}\big)$ defined by the vector field $\{ \cdot, h_i\}_{\tilde{M}_{g,n}}$ on $\hat{\pi}\big(\widehat{U}\big) \subset \tilde{M}_{g,n}$. In other words, $\omega_{T^*\tilde{M}_{g,n}} (v_{h_i}, v ) = dh_i (v)$ for any $v \in \Theta_{U}$. Put $v_{h_i} = \big[\big\{ d^{h_i}_{\alpha\beta} \big\} , \big\{ w^{h_i}_{\alpha} \big\}\big] \in {\mathbf H}^1\big(\mathcal{G}_{\widehat{U}}^{\bullet}\big)$, where $\mathcal{G}^{\bullet}$ is the complex (\ref{complexG}). Put $v^{\text{IMD}}_{h_i} = \big[\big\{ \iota \big(\tilde{\nabla}\big)(d^{h_i}_{\alpha\beta} ) \big\},\big\{(0, w^{h_i}_{\alpha}) \big\}\big]\! $ $\in {\mathbf H}^1 \big(\mathcal{F}_{\widehat{U}}^{\bullet}\big)$. By the diagram (\ref{diagramHatD}), we have $\omega\big(v^{\text{IMD}}_{h_i}, v \big) = d E_i (v)$ for any $v \in \Theta_{\widehat{U}}$, that is, $v^{\text{IMD}}_{h_i}= \{ \cdot, E_i \}$, which is the Hamiltonian vector field associated to~$\mu_i$.

Note that $\{ E_i ,E_j\} = \omega\big(v^{\text{IMD}}_{h_i}, v^{\text{IMD}}_{h_j} \big) = \omega_{T^*\tilde{M}_{g,n}} (v_{h_i}, v_{h_j} ) = \{ h_i , h_j\}_{\tilde{M}_{g,n}}$. By the assumption that the linear functions $h_i$ satisfy $\{ h_i , h_j\}_{\tilde{M}_{g,n}}=0$, we have $\{ E_i ,E_j\} =0$. The common level surface $E_1= \cdots = E_{3g-3+n} = 0$ is $M_{\mathcal{C}/\tilde{M}_{g,n}}^{\boldsymbol{\alpha}}\big(\tilde{\boldsymbol{t}},r,e\big)_{\boldsymbol{\nu}}$. On this common level surface, the vector field associated to the Hamiltonian vector field of $\mu_i$ is $ \big[\big\{ \iota \big(\tilde{\nabla}\big)\big(d^{h_i}_{\alpha\beta} \big) \big\},\{0\}\big] \in {\mathbf H}^1 \big(\mathcal{F}^0_{\widehat{U}} \rightarrow \widetilde{\mathcal{F}}^1_{\widehat{U}} \big)$, which is a vector field associated to the isomonodromic deformations.
\end{proof}

\section[Moduli stack of stable parabolic connections with a quadratic differential and twisted cotangent bundle]{Moduli stack of stable parabolic connections\\ with a quadratic differential and twisted cotangent bundle}\label{Section Twisted Cotangent Bundle}

Let $C$ be a smooth projective curve of genus $g$, $g\ge 2$. The map from the moduli space of pairs $(E,\nabla)$ to the moduli space of vector bundles defined by $(E,\nabla) \mapsto E$ is a twisted cotangent bundle on the moduli space of vector bundles. Here, $E$ is a rank $r$ vector bundle on the fixed curve $C$ and $\nabla$ is a holomorphic connection on $E$. This twisted cotangent bundle has been investigated by Faltings, Ben-Zvi--Biswas, and Ben-Zvi--Frenkel (see \cite[Section~4]{Faltings}, \cite[Section~5]{BE}, \cite[Section~5]{BB1}, and \cite[Section~4.1]{BF}). Moreover, Ben-Zvi--Biswas and Ben-Zvi--Frenkel studied on a twisted cotangent bundle on the moduli space of pairs $(C,E)$ (see \cite[Section~6]{BB1} and \cite[Section~4.3]{BF}). In \cite{BB1,BB2}, Ben-Zvi and Biswas have introduced \textit{extended connections}, which are generalization of holomorphic connections.
We can define a natural map from the moduli space of extended connections to the moduli space of pairs $(C,E)$. This map is generalization of the map $(E, \nabla) \mapsto E$ and has been investigated in \cite[Section~6]{BB1} and \cite[Section~4.3]{BF}. In this section, we consider parabolic connections instead of holomorphic connections and study the moduli space of parabolic connections with a quadratic differential instead of the moduli space of extended connections. The purpose of this section is to show that the moduli space of parabolic connections with a quadratic differential is equipped with structure of a twisted cotangent bundles. In Section~\ref{2018.7.22.13.45}, we consider the moduli stack corresponding to the moduli scheme considered in the previous section. We introduce the moduli stack of pointed smooth projective curves and quasi-parabolic bundles.
We consider the cotangent bundle of this moduli stack. We describe the tangent sheaf of the total space of this cotangent bundle and the canonical symplectic form on this cotangent bundle. In Section~\ref{SS Moduli stack TCB}, we consider a map from the moduli stack of parabolic connections with a quadratic differential to the moduli stack of pointed smooth projective curves and quasi-parabolic bundles. We endow this map with structure of a~twisted cotangent bundle. In Section~\ref{EPC}, we introduce \textit{extended parabolic connections}, which are generalization of parabolic connections and also extended connections. We consider a relation between parabolic connections with a quadratic differential (which are also generalization of parabolic connections) and extended parabolic connections.

In this section, we assume that $\boldsymbol{\nu}$ is generic. If $\boldsymbol{\nu}$ is generic, then any $(\boldsymbol{t}, \boldsymbol{\nu})$-parabolic connection is irreducible. So all $(\boldsymbol{t}, \boldsymbol{\nu})$-parabolic connections are stable.

\subsection{Moduli stack of stable parabolic connections with a quadratic differential}\label{2018.7.22.13.45}

Let $\mathfrak{M}_{g,n}$ be the moduli stack of $n$-pointed smooth projective curves of genus $g$, where $n$-points consist of distinct points. Let $\widehat{\mathfrak{M}}_{g,n}(r,e, \boldsymbol{\nu})$ be the moduli stack of collections $((C,\boldsymbol{t}, \psi), (E, \nabla ,\boldsymbol{l}))$, where $(C, \boldsymbol{t} )$, $\boldsymbol{t}=(t_1, \ldots,t_n)$, is an $n$-pointed smooth projective curve of genus $g$ over $\mathbb{C}$ where $t_1, \ldots,t_n$ are distinct points, $\psi$ is an element of $H^0 \big(C, \Omega_C^{\otimes 2}(D(\boldsymbol{t}))\big)$, and $(E, \nabla ,\boldsymbol{l})$ is a $(\boldsymbol{t}, \boldsymbol{\nu})$-parabolic connection of rank~$r$ and of degree~$e$ on $C$. Let $\Theta_{\widehat{\mathfrak{M}}_{g,n}(r,e, \boldsymbol{\nu})}$ be the tangent complex of $\widehat{\mathfrak{M}}_{g,n}(r,e, \boldsymbol{\nu})$, that is, for each smooth map $f_U \colon U \rightarrow \widehat{\mathfrak{M}}_{g,n}(r,e, \boldsymbol{\nu})$ from a scheme $U$, the pull-back $f_U^{*} \Theta_{\widehat{\mathfrak{M}}_{g,n}(r,e, \boldsymbol{\nu})}$ is $\Theta_{U/\widehat{\mathfrak{M}}_{g,n}(r,e, \boldsymbol{\nu})} \rightarrow \Theta_{U}$ considered as a length $2$ complex supported in degree~$-1$ and~$0$. Here $\Theta_{U/\widehat{\mathfrak{M}}_{g,n}(r,e, \boldsymbol{\nu})} := \Delta^* \big(\Theta_{(U \times_{\widehat{\mathfrak{M}}_{g,n}(r,e, \boldsymbol{\nu})}U)/U} \big) $, where $U \rightarrow U \times_{\widehat{\mathfrak{M}}_{g,n}(r,e, \boldsymbol{\nu})}U$ is the diagonal. Let $\Theta_{\widehat{\mathfrak{M}}_{g,n}(r,e, \boldsymbol{\nu}), x}$ be the fiber of $\Theta_{\widehat{\mathfrak{M}}_{g,n}(r,e, \boldsymbol{\nu})}$ over a point $x = ((C,\boldsymbol{t}, \psi), (E, \nabla ,\boldsymbol{l}))$ of $\widehat{\mathfrak{M}}_{g,n}(r,e, \boldsymbol{\nu})$. Then $H^0(\Theta_{\widehat{\mathfrak{M}}_{g,n}(r,e,\boldsymbol{\nu}), x})$ is isomorphic to ${\mathbf H}^1\big(\mathcal{F}_x^{\bullet}\big)$. Here, we recall the complex~$\mathcal{F}_x^{\bullet}$:
\begin{gather*}
 \mathcal{F}_x^0 := \big\{ s \in \mathcal{A}_{E}(D(\boldsymbol{t}))\,|\, (s - \iota(\nabla) \circ \mathrm{symb}_1(s)) |_{t_i} \big(l^{(i)}_j\big) \subset l^{(i)}_j \text{ for any $i$, $j$} \big\},\\
\widetilde{\mathcal{F}}_x^1 := \big\{ s \in \mathcal{E}{\rm nd}(E) \otimes \Omega^1_{C} (D(\boldsymbol{t})) \,|\, {\sf res}_{t_i} (s) \big(l^{(i)}_j\big) \subset l^{(i)}_{j+1} \text{ for any $i$, $j$} \big\}, \\
 \mathcal{F}^1_x := \widetilde{\mathcal{F}}_x^1 \oplus \Omega_{C}^{\otimes 2} (D(\boldsymbol{t})); \text{ and } d_{\mathcal{F}^{\bullet}} := (d_{\nabla} ,d_{\psi} ) \circ \big(\mathrm{Id}
- \iota\big(\tilde{\nabla}\big) \circ \mathrm{symb}_1,\mathrm{symb}_1\big) \colon \mathcal{F}_x^0 \longrightarrow \mathcal{F}^1_x,
\end{gather*}
where $d_{\nabla} \colon \widetilde{\mathcal{F}}^0 \rightarrow \widetilde{\mathcal{F}}^1$, $s \mapsto \nabla \circ s - s \circ \nabla $ and $d_{\psi} \colon \Theta_{C} (-D(\boldsymbol{t}))\rightarrow \Omega_{C}^{\otimes 2} (D(\boldsymbol{t}))$ defined by~(\ref{dpsi Ui}). The pairing ${\mathbf H}^1\big(\mathcal{F}_x^{\bullet}\big) \otimes {\mathbf H}^1\big(\mathcal{F}_x^{\bullet}\big) \rightarrow {\mathbf H}^2\big(\Omega_C^{\bullet}\big)$ defined by~(\ref{defof2form}) gives a symplectic structure on $\widehat{\mathfrak{M}}_{g,n}(r,e, \boldsymbol{\nu})$.

\begin{Definition}Let $(C, \boldsymbol{t} )$ be an $n$-pointed smooth projective curve of genus $g$ over $\mathbb{C}$ where $t_1, \ldots,t_n$ are distinct points. We say $(E,\boldsymbol{l} )$, $\boldsymbol{l}=\big\{ l^{(i)}_* \big\}_{1\le i \le n}$, is a {\it quasi-parabolic bundle} of rank~$r$ and of degree~$e$ on $(C, \boldsymbol{t} )$ if $E$ is a rank $r$ algebraic vector bundle of degree~$e$ on $C$, and for each $t_i$, $l^{(i)}_* $ is a filtration $E|_{t_i} = l_0^{(i)} \supset l_1^{(i)} \supset \cdots \supset l_r^{(i)}=0$ such that $\dim \big(l_j^{(i)}/l_{j+1}^{(i)}\big)=1$, $j=0,1,\ldots,r-1$.
\end{Definition}
Let $\mathfrak{P}_{g,n}(r,e)$ be the moduli stack of pairs $ ((C,\boldsymbol{t}), (E, \boldsymbol{l}) ) $, where $(C, \boldsymbol{t} )$ ($\boldsymbol{t}=(t_1, \ldots,t_n)$) is an $n$-pointed smooth projective curve of genus $g$ over $\mathbb{C}$ where $t_1, \ldots,t_n$ are distinct points, and $(E, \boldsymbol{l})$ is a quasi-parabolic bundle of rank $r$ and of degree~$e$ on $(C, \boldsymbol{t} )$. We have a projection $\mathfrak{P}_{g,n}(r,e) \rightarrow \mathfrak{M}_{g,n}$. Let $\mathfrak{P}_{g,n} (r,e,\boldsymbol{\nu})$ be the substack defined by the condition where a quasi-parabolic bundle admits a $(\boldsymbol{t}, \boldsymbol{\nu})$-parabolic connection.
Let $\pi_{\mathfrak{P}_{g,n} (r,e,\boldsymbol{\nu})}$ and $\pi_{\mathfrak{M}_{g,n}}$ be the following morphisms:
\begin{gather*}
\pi_{\mathfrak{P}_{g,n} (r,e,\boldsymbol{\nu})} \colon \ \widehat{\mathfrak{M}}_{g,n}(r,e, \boldsymbol{\nu})
\longrightarrow \mathfrak{P}_{g,n} (r,e,\boldsymbol{\nu}), \qquad
 ((C,\boldsymbol{t}, \psi), (E, \nabla, \boldsymbol{l})) \longmapsto ((C,\boldsymbol{t}),( E, \boldsymbol{l})), \\
\pi_{\mathfrak{M}_{g,n}} \colon \ \mathfrak{P}_{g,n} (r,e,\boldsymbol{\nu}) \longrightarrow \mathfrak{M}_{g,n}, \qquad ((C,\boldsymbol{t}), ( E, \boldsymbol{l})) \longmapsto (C,\boldsymbol{t}) .
\end{gather*}
Let $\Theta_{\mathfrak{P}_{g,n} (r,e,\boldsymbol{\nu})}$ be the tangent complex of $\mathfrak{P}_{g,n} (r,e,\boldsymbol{\nu})$. Let $\Theta_{\mathfrak{P}_{g,n} (r,e,\boldsymbol{\nu}), p}$ be the fiber of $\Theta_{\mathfrak{P}_{g,n} (r,e,\boldsymbol{\nu})}\!\!$ over a point $ p = ((C,\boldsymbol{t}), (E, \boldsymbol{l}))$ of $\mathfrak{P}_{g,n} (r,e,\boldsymbol{\nu})$.

We consider infinitesimal deformations of $ p = ((C,\boldsymbol{t}), (E, \boldsymbol{l}))$. We put
\begin{gather*}
\widetilde{\mathcal{H}}_p^0 := \big\{ s \in \mathcal{E}{\rm nd} ( E ) \, |\, s |_{t_i} \big(l^{(i)}_j\big) \subset l^{(i)}_j \text{ for any $i$, $j$} \big\}\qquad \text{and} \\
 \widetilde{\mathcal{H}}_p^1 := \big\{ s \in \mathcal{E}{\rm nd} (E) \otimes \Omega^1_{C} (D(\boldsymbol{t}))\,|\, {\sf res}_{t_i} (s) \big(l^{(i)}_j\big) \subset l^{(i)}_{j+1} \text{ for any $i$, $j$}\big\}.
\end{gather*}
Note that $\big(\widetilde{\mathcal{H}}_p^0\big)^* \otimes \Omega^1_{C} \cong \widetilde{\mathcal{H}}_p^1$. Put
\begin{gather*}
\mathcal{H}_{p}^0:= \big\{ s \in \mathcal{A}_{E}(D(\boldsymbol{t})) \subset \mathcal{E}{\rm nd}_{\mathbb{C}} (E) \,|\, s |_{t_i} \big(l^{(i)}_j\big) \subset l^{(i)}_j \text{ for any $i$, $j$} \big\}
\end{gather*}
and $\mathcal{H}_{p}^1 := \big(\mathcal{H}_{p}^0\big)^* \otimes \Omega^1_{C}$. Then we have exact sequences
\begin{gather*}
\xymatrix{0\ar[r] & \widetilde{\mathcal{H}}_{p}^0 \ar[r] & \mathcal{H}_{p}^0 \ar[r]^-{\mathrm{symb}_1} & \Theta_{C} (-D(\boldsymbol{t}))
\ar[r] & 0} \qquad \text{and} \\
\xymatrix{0\ar[r] & \Omega^{\otimes 2}_{C} (D(\boldsymbol{t})) \ar[r]^-{q} & \mathcal{H}_{p}^1
\ar[r]^-{\kappa} & \widetilde{\mathcal{H}}_{p}^1 \ar[r] & 0.}
\end{gather*}
We take an affine open covering $\{ U_i \}$ of $C$ so that we can take a trivialization $\phi_i \colon E|_{U_i} \cong \mathcal{O}_{U_i}^{\oplus r}$ of $E$ on each $U_i$ and the restriction of $\mathcal{H}_{p}^1$ to $U_i$ is $\mathcal{O}_{U_i}$-isomorphic to the direct sum $\big(\widetilde{\mathcal{H}}_{p}^1\big)_{U_i} \oplus \Omega^{\otimes 2}_{C} (D(\boldsymbol{t}))_{U_i}$. We fix trivializations of~$E$. On $U_i \cap U_i$, the transformation $\big(\mathcal{H}_{p}^1\big)_{U_i} \rightarrow \big(\mathcal{H}_{p}^1\big)_{U_j}$ is given by
\begin{gather}
 \left(\Phi_i(f_i) d f_i , \phi_i(f_i) df_i \otimes df_i\right) \longmapsto (\Phi_j (f_i) df_i, \phi_j (f_i) df_i \otimes df_i) \nonumber\\
\qquad{} :=\left( (\theta_{ij}^{-1} \Phi_i(f_i) \theta_{ij}) d f_i, \phi_i (f_i) df_i \otimes df_i
+ \operatorname{Tr} \left( \theta_{ij}^{-1} \Phi_i (f_i) \frac{\partial \theta_{ij}}{\partial f_i} \right) d f_i \otimes d f_i \right),\label{H transf i to j}
\end{gather}
where $\theta_{ij} := \phi_i \circ \phi^{-1}_j \colon \mathcal{O}_{U_i \cap U_j}^{\oplus r} \rightarrow \mathcal{O}_{U_i \cap U_j}^{\oplus r}$ is a transition function of~$E$. Then $H^0\big(\Theta_{\mathfrak{P}_{g,n} (r,e,\boldsymbol{\nu}), p} \big)$ is isomorphic to $H^1 \big(\mathcal{H}_{p}^0\big) $, and $H^0 \big(\mathcal{H}_{p}^1\big) $ is the dual of $H^1 \big(\mathcal{H}_{p}^0\big) $. The vector space $H^0 \big(\mathcal{H}_{p}^1\big) $ is the space of 1-forms at~$p$.

Put $ p = ((C,\boldsymbol{t}), (E, \boldsymbol{l}))$. Let $\widehat{\Phi}_p$ be an element of $H^0 \big(\mathcal{H}^1_p\big)$, which is described by $(\Phi_p, \phi_p)$ locally, where $\Phi_p df \in \widetilde{\mathcal{H}}^1_p$ and $\phi_p df \otimes df \in \Omega_C^{\otimes 2} (D(\boldsymbol{t}))$. We consider infinitesimal deformations of $\big(p , \widehat{\Phi}_p\big)$. For $\widehat{\Phi}_p$, we define a complex $d^0\big(\widehat{\Phi}_p\big) \colon \mathcal{H}^0_{p} \rightarrow \mathcal{H}^1_{p}$ as follows. For each affine open set $U\subset C$, we define the image of $a_U \partial/\partial f_U + \eta_U \in \mathcal{H}^0_{p}(U)$ as
\begin{gather*}
\left( \Phi_p d f_U \circ \eta_U - \eta_U \circ \Phi_p d f_U - \frac{\partial (a_U \Phi_p)}{\partial f_U } d f_U ,\right. \\
\left.\qquad \operatorname{Tr}\left( \frac{\partial \eta_U}{\partial f_U } \Phi_p d f_U \otimes d f_U \right)
- a_U \frac{\partial \phi_p}{\partial f_U } d f_U \otimes d f_U - 2 \frac{\partial a_U}{\partial f_U }\phi_p d f_U \otimes d f_U \right) .
\end{gather*}
We can show that this homomorphism on each $U$ gives a homomorphism $d^0\big(\widehat{\Phi}_p\big) \colon \mathcal{H}^0_{p} \rightarrow \mathcal{H}^1_{p}$. We consider the first hypercohomology ${\mathbf H}^1 \big(\mathcal{H}^{\bullet}_{p}\big)$ of $d^0\big(\widehat{\Phi}_p\big) \colon \mathcal{H}^0_{p} \rightarrow \mathcal{H}^1_{p}$. By the \v{C}ech cohomology, an element of ${\mathbf H}^1 \big(\mathcal{H}^{\bullet}_{p}\big)$ is described by $\big[ \big\{ a_{ij} \partial / \partial f_i +\eta_{ij} \big\} , \big\{ (\hat{v}_i, \hat{w}_i) \big\} \big]$, where
\begin{gather*}
a_{jk} \frac{\partial f_i}{\partial f_j} -a_{ik} + a_{ij} = 0, \\
\left( \theta_{ji}^{-1} \eta_{jk} \theta_{ji} + a_{jk} \theta_{ji}^{-1} \frac{\partial \theta_{ji}}{\partial f_j} \right)- \eta_{ik} + \eta_{ij} =0, \qquad \text{and} \\
\left( \theta_{ji}^{-1} \hat{v}_j \theta_{ji} ,\hat{w}_j
+ \operatorname{Tr} \left( \theta_{ji}^{-1} \hat{v}_j \frac{\partial \theta_{ji}}{\partial f_j} d f_j \right) \right) - ( \hat{v}_i , \hat{w}_i) = d^0\big(\widehat{\Phi}_p\big) (\eta_{ij})
\end{gather*}
for some affine open covering $\{ U_i \}_i$ of $C$. Infinitesimal deformations of $\big(p , \widehat{\Phi}_p\big)$ are parametrized by ${\mathbf H}^1 \big(\mathcal{H}^{\bullet}_{p}\big)$. Then the fiber of the tangent sheaf of the moduli stack of pairs $\big( ((C,\boldsymbol{t}), (E, \boldsymbol{l})), \widehat{\Phi}\big)$ (where $\widehat{\Phi} \in H^0 \big(\mathcal{H}_{p}^1\big) $) at a point $\big(p, \widehat{\Phi}_p\big)$ is isomorphic to ${\mathbf H}^1 \big(\mathcal{H}^{\bullet}_{p}\big)$. Moreover, we define a paring ${\mathbf H}^1 \big(\mathcal{H}^{\bullet}_{p}\big) \otimes {\mathbf H}^1 \big(\mathcal{H}^{\bullet}_{p}\big) \rightarrow {\mathbf H}^2 \big(\Omega^{\bullet}_{C}\big)$ by
\begin{gather*}
\big[ \big( \big\{ a_{ij} \partial / \partial f_i +\eta_{ij} \big\} , \big\{ \big(\hat{v}_i, \hat{w}_i\big) \big\} \big) \big] \otimes
 \big[ \big( \big\{ a'_{ij} \partial / \partial f_i +\eta'_{ij} \big\} , \big\{ \big(\hat{v}'_i, \hat{w}'_i\big) \big\} \big) \big] \nonumber\\
\qquad{} \longmapsto
 \big[ \big( \big\{ \operatorname{Tr}\big(\eta_{ji} \big(a'_{jk}\Phi_j\big)\big) + \operatorname{Tr} \big( \big(a_{ji} \Phi_j\big) \eta'_{jk} \big)
 - 2 a_{ji} a'_{jk} \phi_{j} \big\}, \nonumber\\
 \qquad \qquad{} -\big\{ {-}\operatorname{Tr}\big( \eta_{ji}\hat{v}'_j \big) + \big( a_{ji} \hat{w}_j'\big)
 - \operatorname{Tr}\big( \hat{v}_i \eta'_{ij} \big) + \big( \hat{w}_i a'_{ij} \big) \big\} \big) \big].\label{sympform of moduli P}
\end{gather*}
\begin{Proposition}This pairing gives a symplectic structure on the moduli stack of pairs $\big( ((C,\boldsymbol{t}),$ $(E, \boldsymbol{l})), \widehat{\Phi}\big)$.
\end{Proposition}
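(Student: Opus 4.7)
The plan is to verify the three defining properties of a symplectic structure---skew-symmetry, non-degeneracy, and closedness---by exploiting the observation that this moduli stack is naturally the total space of the cotangent bundle of $\mathfrak{P}_{g,n}(r,e,\boldsymbol{\nu})$. Indeed, via the Serre-duality identification $\mathcal{H}^1_p \cong (\mathcal{H}^0_p)^* \otimes \Omega^1_C$ built into the definition, $H^0(\mathcal{H}^1_p)$ is the dual of the tangent space $H^1(\mathcal{H}^0_p) \cong H^0(\Theta_{\mathfrak{P}_{g,n}(r,e,\boldsymbol{\nu}), p})$. The differential $d^0(\widehat{\Phi}_p)$ encodes how the cotangent coordinate varies against deformations of $p$, and ${\mathbf H}^1(\mathcal{H}^{\bullet}_p)$ is then the full tangent space of the cotangent bundle at $\big(p,\widehat{\Phi}_p\big)$.

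For skew-symmetry, I would perform a direct \v{C}ech computation: upon swapping $v$ and $v'$, the trace terms rearrange by the cyclic property of the trace, the $a_{ji}a'_{jk}\phi_j$ term is manifestly antisymmetric under the combined swap and index relabeling, and the mixed $\hat{w}$ and $\mathrm{symb}_1$ terms pair antisymmetrically. The resulting equality holds modulo the coboundary relations defining ${\mathbf H}^1(\mathcal{H}^{\bullet}_p)$, in parallel with the argument already used for $\omega$ in Proposition \ref{constructionof2-form}. Non-degeneracy then follows from a filtration argument: the complex $\mathcal{H}^{\bullet}_p$ carries a two-step filtration by the subcomplex $(\mathcal{H}^0_p \to 0)$ and the quotient $(0 \to \mathcal{H}^1_p)$, and on the associated graded the pairing restricts to the tautological Serre-duality pairing $H^1(\mathcal{H}^0_p) \otimes H^0(\mathcal{H}^1_p) \to H^1(\Omega^1_C) \cong \mathbb{C}$, which is perfect. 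Since the off-diagonal block of the full pairing realizes precisely this duality, the matrix of the pairing is invertible.

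The main obstacle will be closedness. The plan is to decompose each tangent vector in ${\mathbf H}^1(\mathcal{H}^{\bullet}_p)$ into a \emph{horizontal} component (representing a deformation of $p$ chosen so that the cotangent coordinate is held constant) and a \emph{vertical} component (representing a deformation of $\widehat{\Phi}$ with $p$ fixed), in the spirit of the initial/IMD decomposition used in the proof of Proposition \ref{Prop closed form}. The vertical-vertical contribution vanishes. The horizontal-vertical contribution assembles into the exterior differential of the canonical Liouville 1-form on the cotangent bundle, which is closed tautologically. The horizontal-horizontal contribution reduces to a trace-type pairing parallel to $\omega_1$ from Proposition \ref{constructionof2-form}, whose closedness is established by the \v{C}ech argument of \cite[Proposition~7.3]{Inaba}. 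The potentially obstructive cross-terms involving $d^0(\widehat{\Phi}_p)$ cancel via a direct cocycle calculation that invokes the explicit defining formula for $d^0(\widehat{\Phi}_p)$.
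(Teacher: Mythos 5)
Your structural observation---that the stack of pairs $\big(((C,\boldsymbol{t}),(E,\boldsymbol{l})),\widehat{\Phi}\big)$ is the total space of the cotangent bundle of $\mathfrak{P}_{g,n}(r,e,\boldsymbol{\nu})$---is the right starting point, and the skew-symmetry check is routine; even the non-degeneracy step is plausible, though you must still verify that the Serre pairing between $H^1\big(\mathcal{H}^0_p\big)$ and $H^0\big(\mathcal{H}^1_p\big)$ descends to a perfect pairing between $\ker H^1\big(d^0\big(\widehat{\Phi}_p\big)\big)$ and $\operatorname{coker} H^0\big(d^0\big(\widehat{\Phi}_p\big)\big)$, i.e., that $d^0\big(\widehat{\Phi}_p\big)$ is (anti-)self-adjoint for the duality $\mathcal{H}^1_p\cong\big(\mathcal{H}^0_p\big)^*\otimes\Omega^1_C$. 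The genuine gap is in the closedness step. The horizontal/vertical decomposition you invoke has no analogue here: in the proof of Proposition~\ref{Prop closed form} the splitting $v=v_{\text{initial}}+v_{\text{IMD}}$ exists because the isomonodromic deformation supplies a canonical \emph{integrable} lift $\widehat{\mathfrak{D}}$, i.e., a flat Ehresmann connection; on $T^*\mathfrak{P}_{g,n}(r,e,\boldsymbol{\nu})\rightarrow\mathfrak{P}_{g,n}(r,e,\boldsymbol{\nu})$ there is no canonical horizontal distribution (``holding the cotangent coordinate constant'' is not intrinsic, and a \v{C}ech-local choice via trivializations is neither global nor integrable), so the reduction of the horizontal-horizontal part of $d\omega$ to the argument of \cite[Proposition~7.3]{Inaba} is unfounded. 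Worse, that part is exactly where the difficulty sits: the first entry of the pairing, $\operatorname{Tr}\big(\eta_{ji}\big(a'_{jk}\Phi_j\big)\big)+\operatorname{Tr}\big(\big(a_{ji}\Phi_j\big)\eta'_{jk}\big)-2a_{ji}a'_{jk}\phi_j$, has coefficients depending linearly on the fibre coordinate $\widehat{\Phi}=(\Phi,\phi)$, so its exterior derivative has nonvanishing vertical components that must cancel against derivatives of the mixed block; \cite[Proposition~7.3]{Inaba}, which concerns the trace form on the moduli of connections over a fixed curve via the local-system description, says nothing about this cancellation, and your ``direct cocycle calculation'' for the cross-terms is precisely the unproved content.

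The efficient repair---and the paper's actual route---also makes the separate skew-symmetry and non-degeneracy checks unnecessary: write the canonical (Liouville) $1$-form in the same \v{C}ech terms, $\theta_{\mathfrak{P}_{g,n}(r,e,\boldsymbol{\nu})}\big[\big\{a_{ij}\partial/\partial f_i+\eta_{ij}\big\},\big\{\big(\hat v_i,\hat w_i\big)\big\}\big]=\big[\operatorname{Tr}(\eta_{ij}\Phi_i\,df_i)+a_{ji}\phi_i\big(\tfrac{\partial\mu_{ij}}{\partial f_i}\,df_j\big)\big]$, and verify by a direct \v{C}ech computation (adding suitable coboundaries such as $a_{ij}\operatorname{Tr}\big(\eta'_{ij}\Phi_i\big)$ and $a'_{ji}a_{ji}\phi_j$) that $d\theta_{\mathfrak{P}_{g,n}(r,e,\boldsymbol{\nu})}$ equals the displayed pairing; since $d\theta$ is the canonical symplectic form of a cotangent bundle, closedness and non-degeneracy follow at once. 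Your plan gestures at the Liouville form only for the mixed contribution, but the whole point is the global identity $\omega=d\theta_{\mathfrak{P}_{g,n}(r,e,\boldsymbol{\nu})}$, and the proposed decomposition does not deliver it.
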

\begin{proof} We define a 1-form $\theta_{\mathfrak{P}_{g,n} (r,e,\boldsymbol{\nu})}$ by
\begin{gather*}
\theta_{\mathfrak{P}_{g,n} (r,e,\boldsymbol{\nu})} \colon \ {\mathbf H}^1 \big(\mathcal{H}^{\bullet}_{p}\big) \longrightarrow H^1 \big( \Omega^1_{C} \big), \\
\big[ \big\{ a_{ij} \partial / \partial f_i +\eta_{ij} \big\} , \big\{ \big(\hat{v}_i, \hat{w}_i\big) \big\} \big]
 \longmapsto \left[ \operatorname{Tr} (\eta_{ij} \Phi_i df_i)+ a_{ji} \phi_i \left( \frac{\partial \mu_{ij}}{\partial f_{i}} df_{j} \right) \right]
\end{gather*}
for each $(\Phi_i df_i, \phi_i df_i \otimes df_i)$. This 1-form $\theta_{\mathfrak{P}_{g,n} (r,e,\boldsymbol{\nu})}$ is the canonical 1-form on the cotangent bundle of $\mathfrak{P}_{g,n} (r,e,\boldsymbol{\nu})$. Let $d \theta_{\mathfrak{P}_{g,n} (r,e,\boldsymbol{\nu})}$ be the exterior differential of $\theta_{\mathfrak{P}_{g,n} (r,e,\boldsymbol{\nu})}$. The 2-form $d \theta_{\mathfrak{P}_{g,n} (r,e,\boldsymbol{\nu})}$ gives the symplectic form on the cotangent bundle of $\mathfrak{P}_{g,n} (r,e,\boldsymbol{\nu})$. We compute the 2-form $d \theta_{\mathfrak{P}_{g,n} (r,e,\boldsymbol{\nu})}$ as follows
\begin{gather*}
 \operatorname{Tr} \big( D_{v'} (\theta_{ij}) D_{v} \big(\theta_{ij}^{-1} \Phi_i df_i\big)
- D_v (\theta_{ij}) D_{v'} \big(\theta_{ij}^{-1} \Phi_i df_i\big) \big) +d ( a_{ij} \operatorname{Tr} (\eta_{ij}' \Phi_i)) \\
\qquad{} =\operatorname{Tr} \big( {-}D_{v'} (\theta_{ij}) \theta_{ij}^{-1} D_{v} (\theta_{ij}) \theta_{ij}^{-1} \Phi_i df_i
+ D_v (\theta_{ij}) \theta_{ij}^{-1} D_{v'} (\theta_{ij}) \theta_{ij}^{-1} \Phi_i df_i \\
\qquad\quad{} +D_{v'} (\theta_{ij}) \theta_{ij}^{-1} D_{v} ( \Phi_i df_i)
-D_v (\theta_{ij}) \theta_{ij}^{-1} D_{v'} ( \Phi_i df_i) \big)+ d \big(a_{ij} \operatorname{Tr} ( \eta_{ij}' \Phi_i )\big) \\
\qquad{} =\operatorname{Tr}\big({-} \eta_{ij} \big( [ \Phi_i d f_i, \eta'_{ij} ] - d (a_{ij} \Phi_i) \big) - \eta_{ij} \hat{v}'_i
+ \eta'_{ij} \hat{v}_i \big) +a_{ij} \operatorname{Tr} (d ( \eta_{ij}') \Phi_i) \\
\qquad{} = - \operatorname{Tr} ( \eta_{ij} \hat{v}'_j) + \operatorname{Tr} (\eta_{ij}' \hat{v}_i) + a_{ij} \operatorname{Tr} ( d (\eta_{ij}') \Phi_i) \\
\qquad{} = \operatorname{Tr} ( \eta_{ji} \hat{v}'_j)+ \operatorname{Tr} ( \eta_{ij}' \hat{v}_i)
 - a_{ij} \operatorname{Tr} \left( \theta_{ji}^{-1} \hat{v}'_j \frac{\partial \theta_{ji}}{\partial f_i} \right)
 + a_{ij} \operatorname{Tr} ( d (\eta_{ij}') \Phi_i)
\end{gather*}
and
\begin{gather*}
 D_{v'} (\mu_{ji}) D_v\left( \phi_i \frac{\partial \mu_{ij}}{\partial f_j} \right) df_i- D_{v} (\mu_{ji}) D_{v'}\left( \phi_i \frac{\partial \mu_{ij}}{\partial f_j} \right) df_i + d\left( a'_{ji} a_{ji} \phi_j \right) \\
\qquad{} = a_{ij} \left( - a'_{ij} d \psi_i - 2 \phi_i \frac{\partial a'_{ij}}{\partial f_i} df_i \right)- a'_{ij} \hat{w}_i df_i + a_{ij} \hat{w}'_i df_i\\
\qquad{} = a_{ij} \hat{w}'_j df_j - a'_{ij} \hat{w}_i df_i + a_{ij} \operatorname{Tr} \left( \theta^{-1}_{ji} \hat{v}'_j \frac{\partial \theta_{ji}}{\partial f_i} \right)- a_{ij} \operatorname{Tr} ( d (\eta_{ij}') \Phi_i) .
\end{gather*}
Then we have this proposition.
\end{proof}

Put $ p = ((C,\boldsymbol{t}), (E, \boldsymbol{l}))$. Let $\nabla$ be a connection: $\nabla \colon E \rightarrow E \otimes \Omega^1_C (D(\boldsymbol{t}))$. For a connection $\nabla$, we define a decomposition of $H^0 \big(\mathcal{H}_{p}^1\big) $ as follows
\begin{gather*}
\begin{split} &
 H^0 \big(\mathcal{H}_{p}^1\big) \longrightarrow H^0 \big(\widetilde{\mathcal{H}}_{p}^1\big) \oplus H^0\big( \Omega^{\otimes 2}_{C} (D(\boldsymbol{t}))\big), \nonumber \\
& \widehat{\Phi} \longmapsto \big(\kappa\big(\widehat{\Phi}\big), \widehat{\Phi} - \psi\big(\nabla , \kappa\big(\widehat{\Phi}\big) \big) \big).
\end{split}
\end{gather*}
Here $\psi\big(\nabla , \kappa\big(\widehat{\Phi}\big)\big) \in H^0\big(\mathcal{H}_{p}^1\big)$ is defined as follows. We take an affine open covering $\{ U_i \}$ of $C$ such that on $U_i$ the connection $\nabla|_{U_i}$ is described by $d + A_i df_i$ and the Higgs field $\kappa\big(\widehat{\Phi}\big)|_{U_i}$ is described by $\Phi_i df_i$. On each $U_i$, we define an element $\psi\big(\nabla , \kappa\big(\widehat{\Phi}\big)\big) |_{U_i}$ as
\begin{gather*}
\psi\big(\nabla , \kappa\big(\widehat{\Phi}\big)\big) |_{U_i} = \left(\Phi _i df_i, \operatorname{Tr} \left( \Phi_i A_i + \frac{1}{2} \Phi_i \Phi_i \right) df_i \otimes df_i \right) \in \mathcal{H}_{p}^1(U_i) ,
\end{gather*}
which gives an element $\psi\big(\nabla , \kappa\big(\widehat{\Phi}\big)\big) \in H^0\big(\mathcal{H}_{p}^1\big)$.

\subsection{Moduli stack as twisted cotangent bundle}\label{SS Moduli stack TCB}

Let $\Gamma\big(\pi_{\mathfrak{P}_{g,n} (r,e,\boldsymbol{\nu})}\big)$ be the sheaf of set on $\mathfrak{P}_{g,n} (r,e,\boldsymbol{\nu})$ where $\Gamma\big(\pi_{\mathfrak{P}_{g,n} (r,e,\boldsymbol{\nu})}\big)(U)$ is the set of sections of $\pi_{\mathfrak{P}_{g,n} (r,e,\boldsymbol{\nu})}$ over $U$ for each smooth map $U\rightarrow \mathfrak{P}_{g,n} (r,e,\boldsymbol{\nu})$. Here $U$ is a scheme. We take a section $\sigma$ and put $\sigma(p) = (\nabla_p,\psi_p )$, where $\nabla_p \colon E \rightarrow E \otimes \Omega^1_C(D(\boldsymbol{t}))$ is a connection such that $(E, \boldsymbol{l}, \nabla_p)$ is a $(\boldsymbol{t}, \boldsymbol{\nu})$-parabolic connection of rank~$r$ and of degree~$e$ on $C$, and $\psi_p \in H^0 \big(C,\Omega_C^{\otimes 2}(D(\boldsymbol{t}))\big)$ for $p = ((C,\boldsymbol{t}), (E, \boldsymbol{l}))$.

\begin{Definition}\label{Omega action} For a 1-form $\widehat{\Phi}$ on $\mathfrak{P}_{g,n} (r,e,\boldsymbol{\nu})$, we define a translation by
\begin{gather}
 \Gamma\big( \pi_{\mathfrak{P}_{g,n} (r,e,\boldsymbol{\nu})} \big) \longrightarrow \Gamma\big( \pi_{\mathfrak{P}_{g,n} (r,e,\boldsymbol{\nu})} \big),\nonumber \\
\sigma(p)=(\nabla_p, \psi_p) \longmapsto t_{\widehat{\Phi}} ( \sigma)(p) := \big(\nabla_p + \kappa\big(\widehat{\Phi}_p\big) , \psi_p
+\big( \widehat{\Phi}_p - \psi\big(\nabla_p , \kappa\big(\widehat{\Phi}_p\big)\big) \big) \big).\label{1sr translation}
\end{gather}
\end{Definition}

By this translation (\ref{1sr translation}), we have an $\Omega^1_{\mathfrak{P}_{g,n} (r,e,\boldsymbol{\nu})}$-torsor structure on $\Gamma\big(\pi_{\mathfrak{P}_{g,n} (r,e,\boldsymbol{\nu})}\big)$.

\begin{Theorem}\label{Main Thm 2}Assume that $\boldsymbol{\nu}$ is generic. Let $\omega$ be the symplectic form on $\widehat{\mathfrak{M}}_{g,n}(r,e, \boldsymbol{\nu})$. We define a map $c \colon \Gamma\big(\pi_{\mathfrak{P}_{g,n} (r,e,\boldsymbol{\nu})}\big) \rightarrow \Omega_{\mathfrak{P}_{g,n}(r,e,\boldsymbol{\nu})}^{2{\rm cl}}$ by $c(\gamma) = \gamma^*(\omega)$ for $\gamma \in \Gamma\big(\pi_{\mathfrak{P}_{g,n} (r,e,\boldsymbol{\nu})}\big)$. Then for any $\widehat{\Phi} \in \Omega_{\mathfrak{P}_{g,n} (r,e,\boldsymbol{\nu})}^1$ we have $c( t_{\widehat{\Phi}}(\gamma) ) = d \big(\widehat{\Phi}\big) + c(\gamma)$. That is, $\big(\Gamma\big(\pi_{\mathfrak{P}_{g,n} (r,e,\boldsymbol{\nu})}\big), c\big)$ is an $\Omega_{\mathfrak{P}_{g,n} (r,e,\boldsymbol{\nu})}^{\ge 1}$-torsor.
\end{Theorem}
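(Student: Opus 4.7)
The plan is to verify the torsor identity $c(t_{\widehat\Phi}(\gamma)) = d(\widehat\Phi) + c(\gamma)$ by a direct \v{C}ech-cohomological computation, working at a point $p=((C,\boldsymbol{t}),(E,\boldsymbol{l}))\in \mathfrak{P}_{g,n}(r,e,\boldsymbol{\nu})$ and using the tangent/symplectic descriptions developed in Sections~\ref{SS infinit defor} and~\ref{2018.7.22.13.45}. Fix a section $\gamma$ with $\gamma(p)=(\nabla_p,\psi_p)$ and a 1-form $\widehat\Phi$ on $\mathfrak{P}_{g,n}(r,e,\boldsymbol{\nu})$ whose local value $\widehat\Phi_p=(\Phi_p\,df,\phi_p\,df\otimes df)$ lies in $H^0(\mathcal{H}_p^1)$. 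The translated section $t_{\widehat\Phi}(\gamma)$ carries $p$ to $(\nabla_p+\kappa(\widehat\Phi_p),\,\psi_p+\widehat\Phi_p-\psi(\nabla_p,\kappa(\widehat\Phi_p)))$. The task is to compute, for tangent vectors $u,v\in \Theta_{\mathfrak{P}_{g,n}(r,e,\boldsymbol{\nu}),p}$ represented \v{C}ech-wise by $[\{a_{ij}\partial/\partial f_i+\eta_{ij}\}]$ and $[\{a'_{ij}\partial/\partial f_i+\eta'_{ij}\}]$, the pushforwards $d\gamma(u),d\gamma(v)$ and $d(t_{\widehat\Phi}\gamma)(u),d(t_{\widehat\Phi}\gamma)(v)$ as classes in ${\mathbf H}^1(\mathcal{F}_x^{\bullet})$, and then evaluate the pairing (\ref{defof2form}) on both.

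The key observation is that the pushforward of $u$ under $\gamma$ has the form $[\{\iota(\nabla_p)(a_{ij}\partial/\partial f_i)+\eta_{ij}\},\{(v_i^\gamma,w_i^\gamma)\}]$ for some coboundary terms determined by the $\gamma$-section, while the pushforward under $t_{\widehat\Phi}\gamma$ replaces $\iota(\nabla_p)$ by $\iota(\nabla_p+\kappa(\widehat\Phi_p))=\iota(\nabla_p)+\Phi_p\cdot (\cdot)\,df$ and adds correction terms $w_i^\gamma\mapsto w_i^\gamma+\delta_i(\widehat\Phi)$ on the quadratic-differential factor, dictated by the explicit formula for $\psi(\nabla_p,\kappa(\widehat\Phi_p))$. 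Substituting both representatives into the $\omega_1+\omega_2$ pairing, the terms not involving $\widehat\Phi$ give $\gamma^*\omega(u,v)$; the remaining terms split into contributions in which $\Phi_p$ couples to $\eta_{ij},\eta'_{ij}$ (from $\omega_1$) and contributions in which $\phi_p$ couples to $a_{ij},a'_{ij}$ (from $\omega_2$).

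The main calculation is to recognize that these extra contributions, after collecting and modifying by an exact cochain of the form $d(a_{ij}\operatorname{Tr}(\eta'_{ij}\Phi_i)+\phi_j a_{ji}a'_{ji})$ analogous to the manipulations used in the proofs of Proposition~\ref{Prop symplectic} and the symplectic-form proposition in Section~\ref{2018.7.22.13.45}, assemble into precisely the cocycle representing $d\widehat\Phi(u,v)$ as computed there. The cross terms pairing $\Phi_p$ against the endomorphism cocycle and $\phi_p$ against the vector-field cocycle reproduce the two summands in the formula for $d\theta_{\mathfrak{P}_{g,n}(r,e,\boldsymbol{\nu})}$; the "diagonal" contributions involving both $\Phi_p$ and $\phi_p$ cancel on the nose, using the identity $\iota(\nabla+\kappa)\circ\mathrm{symb}_1=\iota(\nabla)\circ\mathrm{symb}_1+\kappa$ and the explicit definition of $\psi(\nabla,\kappa(\widehat\Phi))$, which absorbs the quadratic-in-$\Phi_p$ piece.

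The main obstacle is bookkeeping: one must juggle three layers of \v{C}ech data (the splitting $\iota(\nabla)$, the transition functions $\theta_{ij}$ of $E$ and $\mu_{ij}$ of $C$, and the two-component complex $\mathcal{F}^1=\widetilde{\mathcal{F}}^1\oplus\Omega_C^{\otimes 2}(D(\boldsymbol{t}))$) and verify that each cross-term has been accounted for, both in $\omega_1$ and in $\omega_2$, modulo the appropriate coboundaries in ${\mathbf H}^2(\Omega_C^\bullet)\cong H^0(\mathcal{O})$. Once done, the identity $c(t_{\widehat\Phi}\gamma)-c(\gamma)=d\widehat\Phi$ follows pointwise on $\mathfrak{P}_{g,n}(r,e,\boldsymbol{\nu})$, promoting the $\Omega^1$-torsor $(\Gamma(\pi_{\mathfrak{P}_{g,n}(r,e,\boldsymbol{\nu})}),t)$ to an $\Omega^{\ge1}_{\mathfrak{P}_{g,n}(r,e,\boldsymbol{\nu})}$-torsor, as required.
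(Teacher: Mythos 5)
Your overall plan is the paper's: evaluate $c(\gamma)=\gamma^*(\omega)$ and $c(t_{\widehat{\Phi}}(\gamma))$ \v{C}ech-wise, split the symplectic form as $\omega_1+\omega_2$ from (\ref{defof2form}), add exact cochains, and match the difference with $d\widehat{\Phi}$ through the decomposition $\widehat{\Phi}=\psi\big(\nabla_p,\kappa\big(\widehat{\Phi}_p\big)\big)+\big(\widehat{\Phi}-\psi\big(\nabla_p,\kappa\big(\widehat{\Phi}_p\big)\big)\big)$ that is built into the translation (\ref{1sr translation}). However, your ``key observation'' misidentifies the \v{C}ech representative of the pushforward, and this is a step that would fail. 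The $\mathcal{F}^0$-component of the pushforward of a tangent vector $[\{a_{ij}\partial/\partial f_i+\eta_{ij}\}]$ under \emph{any} section is the curve-and-bundle deformation cocycle $\{a_{ij}\partial/\partial f_i+\eta_{ij}\}$ itself (this is the construction in the proof of Proposition~\ref{Tangent sheaf Cech 1}); it does not depend on the section and it is \emph{not} the lift $\iota(\nabla_p)(a_{ij}\partial/\partial f_i)+\eta_{ij}$, which differs from it by the 1-cochain $\{a_{ij}A_i\}$ (in trivializations), a cochain that is not a coboundary in general, so your expression represents a different hypercohomology class. The section enters only through the $0$-cochain $(v_i,w_i)$ and through the projection $\eta(s)=s-\iota(\tilde{\nabla})\circ\mathrm{symb}_1(s)$ occurring in the definition of $\omega_1$, which for the correct representative gives $\eta_{ij}-a_{ij}A_i$ along $\gamma$ and $\eta_{ij}-a_{ij}(A_i+\Phi_i)$ along $t_{\widehat{\Phi}}(\gamma)$.

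With your representatives, $\eta(\cdot)$ of the 1-cochain would be $\eta_{ij}$ for \emph{both} sections, so the 1-cochain parts of the two $\omega_1$-pullbacks would coincide and the difference would come only from the $0$-cochains. The essential terms $\operatorname{Tr}\big(\eta_{ji}\big(a'_{jk}\Phi_j\big)\big)+\operatorname{Tr}\big(\big(a_{ji}\Phi_j\big)\eta'_{jk}\big)-\operatorname{Tr}\big(2a_{ji}a'_{jk}A_j\Phi_j+a_{ji}a'_{jk}\Phi_j\Phi_j\big)$, which appear in $d\,\psi\big(\nabla_p,\kappa\big(\widehat{\Phi}_p\big)\big)$ and are exactly what the quadratic-differential correction in the translation is designed to match, would never be produced; so the claimed pattern (``$\Phi_p$ couples only to $\eta$ from $\omega_1$, $\phi_p$ only to $a$ from $\omega_2$, diagonal terms cancel on the nose'') is not what happens, and $c(t_{\widehat{\Phi}}(\gamma))-c(\gamma)=d\widehat{\Phi}$ does not close as described. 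Note also that the identity $\iota(\nabla+\kappa)\circ\mathrm{symb}_1=\iota(\nabla)\circ\mathrm{symb}_1+\kappa$, which you invoke, is precisely what shifts the \emph{projection} $\eta(\cdot)$ by $-a_{ij}\Phi_i$ when the connection is translated; it acts there, not on the representative. Once the representative is corrected, your computation becomes exactly the paper's proof: the $\omega_1$-difference equals $d\,\psi\big(\nabla_p,\kappa\big(\widehat{\Phi}_p\big)\big)$ and the $\omega_2$-difference equals $d\big(\widehat{\Phi}-\psi\big(\nabla_p,\kappa\big(\widehat{\Phi}_p\big)\big)\big)$.
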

By this theorem and the argument as in Section~\ref{TDO on SAV}, the morphism $\pi_{\mathfrak{P}_{g,n} (r,e,\boldsymbol{\nu})} \colon \widehat{\mathfrak{M}}_{g,n}(r,e, \boldsymbol{\nu})$ $\rightarrow \mathfrak{P}_{g,n} (r,e,\boldsymbol{\nu})$ is equipped with structure of a twisted cotangent bundle.

\begin{proof} Put $ p = ((C,\boldsymbol{t}), (E, \boldsymbol{l}))$. Let $v$ be an element of $H^1 \big(\mathcal{H}^0_p\big)$. We take a section $\sigma$ and put $\sigma(p) = (\nabla_p,\psi_p )$, where $\nabla_p \colon E \rightarrow E \otimes \Omega^1_C(D(\boldsymbol{t}))$ is a connection and $\psi_p \in H^0 \big(C,\Omega_C^{\otimes 2}(D(\boldsymbol{t}))\big)$.

We take an affine open covering $\{ U_i \}$ of $C$ such that elements of $H^1 \big(\mathcal{H}^0_p\big)$ are described by the \v{C}ech cohomology: $v= [ \{ a_{ij} \partial / \partial f_i +\eta_{ij} \} ] $, where $a_{ij} \partial / \partial f_i +\eta_{ij}\in \mathcal{H}^0_p (U_i \cap U_j)$. Let $\nabla_p(v, \epsilon)$, $\widehat{\Phi}_p (v, \epsilon)$, and $\psi_p(v, \epsilon)$ be the infinitesimal deformations of $\nabla_p$, $\widehat{\Phi}_p$, and $\psi_p$ associated to $v$ over $\operatorname{Spec} \mathbb{C}[\epsilon]$, respectively, where $\epsilon^2=0$. We take local descriptions of the connection, the Higgs field, and the quadratic differentials on $U_i$ as follows. The connection $\nabla_p(v, \epsilon)$ and the Higgs field $\kappa\big(\widehat{\Phi}_p\big) (v, \epsilon)$ are described as $d+ A_i df_i + \epsilon v_i$ $\mathrm{mod}\ d\epsilon$ and $\Phi_i df_i + \epsilon \hat{v}_i$ $\mathrm{mod}\ d\epsilon$ on $U_i$, respectively. Moreover, on $U_i$ the quadratic differentials $\psi_p(v, \epsilon)$ and $\big( \widehat{\Phi}_p- \psi\big(\nabla_p , \kappa\big(\widehat{\Phi}_p\big)\big)\big)(v, \epsilon)$ are described by $\psi_{i} df_i \otimes df_i + \epsilon w_{i} df_i \otimes df_i$ and $\phi_{i} df_i \otimes df_i + \epsilon \hat{w}_{i} df_i \otimes df_i$ $\mathrm{mod}\ d\epsilon$ on $U_i$, respectively.

We decompose $\widehat{\Phi}_p = \widehat{\Phi}_1 + \widehat{\Phi}_2$, where $\widehat{\Phi}_1= \psi\big(\nabla_p , \widehat{\Phi}_p\big)$ and $\widehat{\Phi}_2= \widehat{\Phi}_p - \psi\big(\nabla_p , \widehat{\Phi}_p\big)$.
Then we can compute the exterior differentials of $\widehat{\Phi}_1$ and $\widehat{\Phi}_2$ as follows
\begin{gather*}
d\widehat{\Phi}_1 (v, v') = \big[ \big( \big\{ \operatorname{Tr}( \eta_{ji} (a'_{jk} \Phi_j )) + \operatorname{Tr} ( (a_{ji} \Phi_j ) \eta'_{jk} )
 - \operatorname{Tr} ( 2 a_{ji}a'_{jk} A_j \Phi_j + a_{ji} a'_{jk} \Phi_j \Phi_j ) \big\} \\
\hphantom{d\widehat{\Phi}_1 (v, v')=}{} -\big \{ {-}\operatorname{Tr}( \eta_{ji}\hat{v}'_j ) +
\operatorname{Tr} \big( (a_{ji} A_j ) \hat{v}_j' + ( a_{ji} \Phi_j ) v_j' + ( a_{ji} \Phi_j ) \hat{v}_j' \big) \\
\hphantom{d\widehat{\Phi}_1 (v, v')=}{} - \operatorname{Tr}( \hat{v}_i \eta'_{ij} ) + \operatorname{Tr} \big( ( a'_{ij} A_i ) \hat{v}_i + ( a'_{ij} \Phi_i ) v_i
+ ( a'_{ij} \Phi_i ) \hat{v}_i \big) \big\} \big) \big], \\
d\widehat{\Phi}_2 (v, v') = \big[ \big( \{ - 2 a_{ji} a'_{jk} \phi_{j} \} ,-\{ a_{ji}\hat{w}'_j + \hat{w}_i a'_{ij} \} \big) \big].
\end{gather*}

On the other hand, the symplectic form is computed as follows. Put $c(\gamma)_1 := \gamma^*(\omega_1)$ and $c(\gamma)_2 := \gamma^*(\omega_2)$, where $\omega_1$ and $\omega_2$ are defined by (\ref{defof2form}) in the proof of Proposition~\ref{constructionof2-form}. We have
\begin{gather*}
c( t_{\widehat{\Phi}}(\gamma) )_1(v,v') = \big[ \big(\big\{ \operatorname{Tr}\big( ( \eta_{ij}- a_{ij} A_i - a_{ij} \Phi_i )
 \circ ( \eta'_{jk}- a'_{jk} A_j - a'_{jk} \Phi_j ) \big) \big\}, \\
\hphantom{c( t_{\widehat{\Phi}}(\gamma) )_1(v,v') =}{}
-\big\{ \operatorname{Tr} \big(( \eta_{ij}- a_{ij} A_i - a_{ij} \Phi_i ) \circ (v'_j + \hat{v}'_j)\big)\\
\hphantom{c( t_{\widehat{\Phi}}(\gamma) )_1(v,v') =}{}
 - \operatorname{Tr} \big((v_i + \hat{v}_i) \circ ( \eta'_{ij}- a'_{ij} A_i - a'_{ij}\Phi_i ) \big) \big\} \big)\big] \\
\hphantom{c( t_{\widehat{\Phi}}(\gamma) )_1(v,v') }{}
= \big[ \big(\big\{ {-}\operatorname{Tr}\big( ( \eta_{ji}- a_{ji} A_j - a_{ji} \Phi_j ) \circ ( \eta'_{jk}- a'_{jk} A_j - a'_{jk} \Phi_j ) \big) \big\}, \\
\hphantom{c( t_{\widehat{\Phi}}(\gamma) )_1(v,v') =}{}
 -\big\{ {-} \operatorname{Tr} \big(( \eta_{ji}- a_{ji} A_j - a_{ji} \Phi_j ) \circ (v'_j + \hat{v}'_j)\big) \\
\hphantom{c( t_{\widehat{\Phi}}(\gamma) )_1(v,v') =}{}
 - \operatorname{Tr} \big((v_i + \hat{v}_i) \circ ( \eta'_{ij}- a'_{ij} A_i - a'_{ij}\Phi_i ) \big) \big\} \big)\big]
\end{gather*}
and
\begin{gather*}
c( \gamma )_1(v,v') = \big[ \big(\big\{ \operatorname{Tr}\big( ( \eta_{ij}- a_{ij} A_i ) \circ ( \eta'_{jk}-a'_{jk} A_j ) \big) \big\}, \\
\hphantom{c( \gamma )_1(v,v') =}{} -\big\{ \operatorname{Tr} \big(( \eta_{ij}-a_{ij} A_i ) \circ v'_j \big) - \operatorname{Tr} \big(v_i \circ ( \eta'_{ij}- a'_{ij}A_i ) \big) \big\} \big)\big] \\
\hphantom{c( \gamma )_1(v,v') }{}
= \big[ \big(\big\{ {-}\operatorname{Tr}\big( ( \eta_{ji}- a_{ji} A_j ) \circ ( \eta'_{jk}-a'_{jk} A_j ) \big) \big\}, \\
\hphantom{c( \gamma )_1(v,v') =}{}
 - \big\{ {-}\operatorname{Tr} \big(( \eta_{ji}-a_{ji} A_j ) \circ v'_j \big) - \operatorname{Tr} (v_i \circ \big( \eta'_{ij}- a'_{ij}A_i ) \big) \big\} \big)\big].
\end{gather*}
Then we obtain
\begin{gather*} c( t_{\tilde{\Phi}}(\gamma) )_1(v,v') - c( \gamma )_1(v,v') = d\widehat{\Phi}_1 (v, v').\end{gather*} Moreover, we also can show that
\begin{gather*} c( t_{\tilde{\Phi}}(\gamma) )_2(v,v') - c( \gamma )_2(v,v') = d\widehat{\Phi}_2 (v, v').\end{gather*} Then we obtain that $\big(\Gamma\big(\pi_{\mathfrak{P}_{g,n} (r,e,\boldsymbol{\nu})}\big), c\big)$ is an $\Omega_{\mathfrak{P}_{g,n} (r,e,\boldsymbol{\nu})}^{\ge 1}$-torsor.
\end{proof}

\subsection{Extended parabolic connections}\label{EPC}

Let $(C, \boldsymbol{t} )$, $\boldsymbol{t}= t_1+\cdots + t_n$, be an $n$-pointed smooth projective curve of genus $g$ over $\mathbb{C}$ where $t_1, \ldots,t_n$ are distinct points. Put $D(\boldsymbol{t}) = t_1 +\cdots + t_n$. We describe a description of $(\boldsymbol{t}, \boldsymbol{\nu})$-parabolic connection with a quadratic differential in terms of a ``integral kernel" on $C\times C$ as in~\cite{BB1} and~\cite{BB2}. Let $p_1 \colon C\times C \rightarrow C$ and $p_2 \colon C\times C \rightarrow C$ be the first and second projections, respectively. Put $\mathcal{O}_C(*D(\boldsymbol{t})):= \varinjlim_m \mathcal{O}_C(m D(\boldsymbol{t}))$, and $\Omega^1_C(*D(\boldsymbol{t})):= \Omega^1_{C} \otimes \mathcal{O}_C(*D(\boldsymbol{t}))$. Let $\mathcal{E}{\rm nd}^0(E)\subset \mathcal{E}{\rm nd}(E)$ be the subbundle of traceless endmorphisms of~$E$. We define sheaves $\mathcal{K}_{D(\boldsymbol{t})}(E)$ on $C\times C$ as $\mathcal{K}_{D(\boldsymbol{t})}(E):= p_1^*\big(E\otimes \Omega^1_C(*D(\boldsymbol{t}))\big) \otimes p_2^*\big(E^*\otimes \Omega^1_C\big) (2\Delta)$, where $\Delta \subset C\times C$ is the diagonal. We have a natural injective morphism $\Omega_C^{\otimes 2} (*D(\boldsymbol{t})) \otimes \mathcal{E}{\rm nd}^0(E) \rightarrow \mathcal{K}_{D(\boldsymbol{t})}(E)|_{3 \Delta}$. We define a sheaf $\mathcal{E}{\rm xConn}_{D(\boldsymbol{t})}(E)$ on $3\Delta$ by
\begin{gather*}
\mathcal{E}{\rm xConn}_{D(\boldsymbol{t})}(E) = \big\{ s \in \mathcal{K}_{D(\boldsymbol{t})}(E)|_{3 \Delta}/ (\Omega_C^{\otimes 2} (*D(\boldsymbol{t})) \otimes \mathcal{E}{\rm nd}^0(E))\,|\, s|_{\Delta } = \mathrm{Id}_E \big\}.
\end{gather*}
Note that we can consider $s|_{2\Delta}$ as a connection $s|_{2\Delta}\colon E \rightarrow E \otimes \Omega^1_C(*D(\boldsymbol{t}))$. We consider elements of $\mathcal{E}{\rm xConn}_{D(\boldsymbol{t})}(E)$ as pairs of connections and quadratic differentials on $C$ locally. Let $U_i$ and~$U_j$ be open sets of~$C$. Let $(A_i , a_i)$ be an elements of $\mathcal{E}{\rm xConn}_{D(\boldsymbol{t})}(E)$ on~$U_i$. Here $A_i df_i$ is a~connection matrix on~$U_i$ and $a_i df_i \otimes df_i \in H^0\big(U_i, \Omega_C^{\otimes 2} (D(\boldsymbol{t}))\big)$. The transformation of the pair is the following
\begin{gather*}
(A_i d f_i, a_i df_i \otimes df_i) \\
\quad{} \longmapsto \left( \theta_{ij}^{-1}A_i \theta_{ij} df_i+ \theta_{ij}^{-1} \frac{d \theta_{ij}}{d f_i} d f_i,
 \left( a_i + \operatorname{Tr} \left( \theta_{ij}^{-1} A_i \frac{d \theta_{ij}}{d f_i} \right)
 +\frac{1}{2} \operatorname{Tr} \left(\theta_{ij}^{-1} \frac{d^2 \theta_{ij}}{d f_i^2} \right) \right) df_i \otimes df_i \right)
\end{gather*}
on $U_i \cap U_j$. We may define an $\mathcal{H}_{(C,\boldsymbol{t},E,\boldsymbol{l})}^1$-action on $\mathcal{E}{\rm xConn}_{D(\boldsymbol{t})}(E)$ for any parabolic structu\-res~$\boldsymbol{l}$ by~(\ref{H transf i to j}).

Let $\nabla \colon E\rightarrow E\otimes\Omega_C^1(D(\boldsymbol{t}))$ be a connection. We can define a global section $\nabla_{\text{Ex}}$ of $\mathcal{E}{\rm xConn}_{D(\boldsymbol{t})}(E)$ associated to $\nabla$ as follows. Take a trivialization of the locally free sheaf $E$ on an open set $U_{i}$ of $C$. Let $A_i df_i$ be the connection matrix of $\nabla$ on $U_{i}$. We define $\nabla_{\text{Ex}}|_{U_{i}} \in \mathcal{E}{\rm xConn}_{D}(E)(U_{i})$ as
\begin{gather*}
\left( A_i d f_i , \frac{1}{2} \operatorname{Tr} \left( d(A_i ) \otimes d f_{i} \right) + \frac{1}{2} \operatorname{Tr}\left(A_i d f_{i} \otimes A_i d f_{i}\right) \right),
\end{gather*}
where $d$ is the exterior derivative. Let $\big(\big(E, \nabla , \big\{ l_j^{(i)} \big\}\big), \psi\big)$ be a $(\boldsymbol{t}, \boldsymbol{\nu})$-parabolic connection with a~quadratic differential. For $(\nabla, \psi)$, we can define a global section $\nabla_{\text{Ex}}+\psi$ of $\mathcal{E}{\rm xConn}_{D(\boldsymbol{t})}(E)$ by the above construction. We call this description $\big(E, \nabla_{\text{Ex}} + \psi , \big\{ l_j^{(i)} \big\} \big)$ a~$(\boldsymbol{t}, \boldsymbol{\nu})$-\textit{extended parabolic connection}.

By the identification a parabolic connection with a quadratic differential as an extended parabolic connection, we obtain another $\Omega^1_{\mathfrak{P}_{g,n} (r,e,\boldsymbol{\nu})}$-torsor structure on $\Gamma\big(\pi_{\mathfrak{P}_{g,n} (r,e,\boldsymbol{\nu})}\big)$. We take a section $\sigma$ of $\pi_{\mathfrak{P}_{g,n} (r,e,\boldsymbol{\nu})}$ and put $\sigma(p) = (\nabla_p,\psi_p )$, where $\nabla_p \colon E \rightarrow E \otimes \Omega^1_C(D(\boldsymbol{t}))$ is a connection and $\psi_p \in H^0 \big(C,\Omega_C^{\otimes 2}(D(\boldsymbol{t}))\big)$ for $p = ((C,\boldsymbol{t}), (E, \boldsymbol{l}))$. For a connection $\nabla_p \colon E \rightarrow E \otimes \Omega^1_{C} (D(\boldsymbol{t}))$ and $\Phi_p \in H^0\big(\widetilde{\mathcal{H}}_{p}^1\big)$, we define $\psi'(\nabla_p , \Phi_p) \in H^0\big(\mathcal{H}_{p}^1\big)$ as follows. We take an affine open covering~$\{ U_i \}$ of $C$ such that on $U_i$ the connection $\nabla_p|_{U_i}$ is described by $d + A_i df_i$ and the Higgs field~$\Phi_p|_{U_i}$ is described by $\Phi_i df_i$. On each $U_i$, we define an element $\psi'(\nabla_p , \Phi_p) |_{U_i}$ as
\begin{gather*}
\psi'(\nabla_p , \Phi_p) |_{U_i} = \left( \Phi _i df_i, \operatorname{Tr} \left( \Phi_i A_i df_i \otimes df_i + \frac{1}{2} \Phi_i \Phi_i df_i \otimes df_i
+ \frac{1}{2} d (\Phi_i)\otimes df_i \right) \right) \in \mathcal{F}_{P_{\boldsymbol{\nu}}}^1(U_i) ,
\end{gather*}
which gives an element $\psi'(\nabla_p , \Phi_p) \in H^0\big( \mathcal{H}_{p}^1 \big)$. For a 1-form $\widehat{\Phi}$ on $\mathfrak{P}_{g,n} (r,e,\boldsymbol{\nu})$, we define a~translation by
\begin{gather*}
t'_{\widehat{\Phi}} \colon \ \Gamma\big(\pi_{\mathfrak{P}_{g,n} (r,e,\boldsymbol{\nu})}\big)\longrightarrow \Gamma\big(\pi_{\mathfrak{P}_{g,n} (r,e,\boldsymbol{\nu})}\big), \\
\hphantom{t'_{\widehat{\Phi}} \colon}{} \ \sigma(p) = (\nabla_p, \psi_p) \longmapsto t'_{\widehat{\Phi}}(\sigma ) (p) := \big( \nabla_p +\kappa \big(\widehat{\Phi}_p\big) , \psi_p -\big( \widehat{\Phi}_p- \psi'\big(\nabla_p , \kappa \big(\widehat{\Phi}_p\big) \big)\big)\big).
\end{gather*}
By the translation, we have another $\Omega^1_{\mathfrak{P}_{g,n} (r,e,\boldsymbol{\nu})}$-torsor structure on $\Gamma\big(\pi_{\mathfrak{P}_{g,n} (r,e,\boldsymbol{\nu})}\big)$.

\subsection*{Acknowledgements}
The author is supported by Grant-in-Aid for JSPS Research Fellows Number 18J00245. He is grateful to the anonymous referees' suggestions which helped to improve the paper.

\pdfbookmark[1]{References}{ref}
\LastPageEnding

\end{document}